\newtheorem{thm}{Theorem}[section]
\newtheorem{cor}[thm]{Corollary}
\newtheorem{lem}[thm]{Lemma}
\newtheorem{prop}[thm]{Proposition}
\theoremstyle{definition}
\newtheorem{defn}[thm]{Definition}
\theoremstyle{remark}
\newtheorem{rmk}[thm]{Remark}
\numberwithin{equation}{section}
\def \C {\mathbb C}
\def \D {\mathcal D}
\def \N {\mathbb N}
\def \Q {\mathbb Q}
\def \P {\mathbb P}
\def \O {\mathcal O}
\def \M {\mathcal M}
\def \X {\mathcal X}
\def \vol {\text{vol}}
\def \p {\partial}
\def \bp {\bar{\partial}}
\definecolor{darkgreen}{rgb}{0,0.5,0}
\newcommand{\kibitz}[2]{\ifnum\Comments=1\textcolor{#1}{#2}\fi}
\begin{document}
\title{Existence and deformations of K\"ahler-Einstein metrics on smoothable $\Q$-Fano varieties}
\author[1]{Cristiano Spotti \thanks{c.spotti@dpmms.cam.ac.uk}}
\author[2]{Song Sun\thanks{song.sun@stonybrook.edu. }}
\author[2]{Chengjian Yao \thanks{yao@math.sunysb.edu}}
\affil[1]{DPMMS University of Cambridge\\}
\affil[2]{Stony Brook University}

\renewcommand\Authands{ and }
\date{}

\maketitle

\begin{abstract} In this article we prove the existence of K\"ahler-Einstein metrics on $\Q$-Gorenstein smoothable, K-polystable  $\Q$-Fano varieties and we show how these metrics behave, in the Gromov-Hausdorff sense, under $\Q$-Gorenstein smoothings.  
\end{abstract}

\section{Introduction}

It has been recently proved \cite{CDS0, CDS1, CDS2, CDS3} that the existence of a K\"ahler-Einstein (KE) metric on a smooth Fano manifold $X$ is equivalent to the algebro-geometric notion of ``K-polystability" (which is called \emph{K-stability} in \cite{CDS0}) .  Motivated by the study of the compactification of the moduli spaces of such smooth KE Fano manifolds, in this article we are going to investigate the existence of KE metrics on certain \emph{singular} Fano varieties and their local deformations. 

In order to justify our objects of interest, let us recall some facts on these KE moduli spaces.  Denote with $\M^n$ the space of all $n$-dimensional KE Fano manifolds, normalized so that the K\"ahler form $\omega$ is in the class $2\pi c_1(X)$, modulo biholomorphic isometries.  It is well-known that such set is pre-compact in the Gromov-Hausdorff topology. By \cite{DS} we can define the (refined) Gromov-Hausdorff compactification $\overline{\M}^n$ of $\M^n$: every point in the boundary $\overline{\M}^n\setminus \M^n$ is naturally a $\Q$-Gorenstein smoothable $\Q$-Fano variety which admits a K\"ahler-Einstein metric in a weak sense (see below for the precise definitions). It is  exactly this special category of singular Fano varieties, namely the ones which admit one-dimensional $\Q$-Gorenstein smoothings, on which we focus our attention, hoping that the understanding of the existence and deformations of these singular KE varieties will be useful in the study of structures  of the compactified moduli space $\overline{\M}^n$. Note it has been proved by Donaldson \cite{Do1} and Odaka \cite{Odaka} that being K\"ahler-Einstein (or being K-polystable) for smooth Fano manifolds with discrete automorphism group is a Zariski open condition. 
 
Before stating the main results of the paper, we recall some definitions. A {\em $\Q$-Fano variety} $X$ is a normal projective variety over $\C$ with at worst log-terminal singularities and with ample $\Q$-Cartier anti-canonical divisor $-K_{X}$. Thus, for some integer $\lambda$, $X$ is embedded into some projective space $\P^N$ by sections of $K_{X}^{-\lambda}$. A {\em weak K\"ahler-Einstein metric} on $X$ is a K\"ahler current in $2\pi c_1(X)$ with locally continuous potential, and that is a smooth K\"ahler-Einstein metric on  the smooth part $X^{reg}$ of $X$. Note there are different definitions in the literature, but they are all equivalent in this context \cite{EGZ}. 
A $\Q$-Fano variety $X$ is called {\em $\Q$-Gorenstein smoothable}\footnote{In general one can also define the notion of being \emph{smoothable}, without the condition on the relative anti-canonical divisor. This is a genuinely different notion, even for two dimensional quotient singularities, see \cite{KSB}} if there is a flat family  
$$\pi:\mathcal{X}\rightarrow \Delta,$$
over a disc $\Delta$ in $\C$ so that  $X\cong X_0$, $X_t$ is smooth for $t\neq 0$, and $\mathcal{X}$ admits a relatively $\Q$-Cartier anti-canonical divisor $-K_{\X/\Delta}$ (in this case $\pi:\mathcal{X}\rightarrow \Delta$ is called a \emph{$\mathbb{Q}$-Gorenstein smoothing} of $X_0$). 
It is well-known that, by possibly shrinking $\Delta$, we may assume that $X_t$ is a Fano manifold for $t\neq 0$, and that there exists an integer $\lambda>0$ such that $K_{X_t}^{-\lambda}$ are very ample line bundles with vanishing higher cohomology for all $t\in \Delta$. Moreover, the dimension, denoted by $N(\lambda)$, of the corresponding linear systems $|-\lambda K_{X_t}|$ is constant in $t$. 
Thus, when needed, we may assume  that the family $\mathcal{X}$ is relatively very ample, i.e. there is a smooth embedding $i:\mathcal{X}\rightarrow \mathbb{P}^{N(\lambda)}\times \mathbb{C}$  such that $i_t=i|_{X_t}:X_t\rightarrow \mathbb{P}^{N(\lambda)}\times\{t\}$ pulls the line bundle $\mathcal{O}(1)$ on $\mathbb{P}^{N(\lambda)}$ back to $K_{X_t}^{-\lambda}$.

The main theorem of this paper is the following result, which extends the results of \cite{CDS0, CDS1, CDS2, CDS3} to $\Q$-Gorenstein smoothable $\Q$-Fano varieties and simultaneusly gives some understanding on the way such singular metric spaces are approached by smooth KE metrics on the (analytically) nearby Fano manifolds.

\begin{thm}\label{thm1-1} Let $\pi:\mathcal{X}\rightarrow \Delta$ be a $\Q$-Gorenstein smoothing of a $\Q$-Fano variety $X_0$. If $X_0$ is K-polystable then $X_0$ admits a weak K\"ahler-Einstein metric $\omega_0$. 

Moreover, assuming that the automorphism group $Aut(X_0)$ is discrete, $X_t$ admit smooth K\"ahler-Einstein metrics $\omega_t$ for all $|t|$ sufficiently small and $(X_0,\omega_0)$ is the limit in the Gromov-Hausdorff topology of $(X_t, \omega_t)$, in the sense of \cite{DS}.
\end{thm}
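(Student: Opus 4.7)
The plan is to combine the Aubin continuity method on the smooth fibres $X_t$ ($t\neq 0$) with the Cheeger--Colding and Donaldson--Sun compactness theory, using K-polystability of $X_0$ as the algebro-geometric input to rule out alternative Gromov--Hausdorff limits. The overall strategy is to produce weak KE metrics on $X_0$ as Gromov--Hausdorff limits of (smooth or twisted-KE) metrics on nearby $X_t$, and then identify the limit variety with $X_0$ itself.

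Concretely, for each small $t\neq 0$ I would run the Aubin path
\[
\mathrm{Ric}(\omega_{t,s}) = s\,\omega_{t,s} + (1-s)\chi_t,\qquad s\in[0,1],
\]
on the smooth Fano $X_t$, where $\chi_t$ is a smoothly varying reference K\"ahler form in $2\pi c_1(X_t)$ extracted from the relatively very ample embedding $i:\X \hookrightarrow \P^{N(\lambda)}\times\Delta$. Let $R(t)\in(0,1]$ be the supremum of $s$ for which a solution exists; standard implicit function theorem arguments show the solution set is open. Given any sequence $t_i\to 0$ and $s_i\nearrow R(t_i)$, the metrics $\omega_{t_i,s_i}$ enjoy a uniform Ricci lower bound, so by Cheeger--Colding--Tian one extracts a non-collapsed Gromov--Hausdorff subsequential limit $(Y,d_\infty)$. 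Via an extension of the Donaldson--Sun structure theorem to this twisted, almost-KE setting, $Y$ carries the structure of a normal $\Q$-Fano variety with log-terminal singularities equipped with a weak K\"ahler--Einstein metric, and the embeddings $i_{t_i}$ of $X_{t_i}$ converge (after suitable $SL(N(\lambda)+1,\C)$-adjustments) to an embedding of $Y$ into $\P^{N(\lambda)}$ as a flat limit in $\mathrm{Hilb}(\P^{N(\lambda)})$.

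The crucial, and hardest, step is to identify $Y\cong X_0$. The Hilbert scheme limits of $X_{t_i}$ contain both $X_0$ (from the original flat family $\pi$) and $Y$ (from the Gromov--Hausdorff limit after reparametrization), so Luna's slice theorem together with the properness of the Hilbert scheme produces a one-parameter $\C^*$-equivariant flat family which is genuinely a test configuration of $X_0$ with central fibre $Y$. Since $(Y,\omega_\infty)$ already admits a weak KE metric, its generalised Donaldson--Futaki invariant vanishes by the Bergman-kernel-expansion computation on the CM line bundle. K-polystability of $X_0$ then forces $Y\cong X_0$, which simultaneously delivers the weak KE on $X_0$ and shows $R(t)\to 1$ as $t\to 0$.

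For the second part, assume $Aut(X_0)$ is discrete. The resulting vanishing of holomorphic vector fields on $X_0$ kills the cokernel of the linearised Monge--Amp\`ere operator, so an implicit function theorem argument — set up in weighted H\"older spaces adapted to the log-terminal singularities, using the partial $C^0$-estimate to control plurianticanonical sections uniformly in $t$ — deforms $\omega_0$ to smooth KE metrics $\omega_t$ on $X_t$ for $|t|$ small. Gromov--Hausdorff convergence $(X_t,\omega_t)\to(X_0,\omega_0)$ follows from Donaldson--Sun applied to any subsequence, together with uniqueness of the weak KE on $X_0$ in the discrete-automorphism case. The main obstacle throughout is the identification step $Y\cong X_0$: one must upgrade an analytic Gromov--Hausdorff degeneration to a bona fide algebro-geometric test configuration of $X_0$ on which K-polystability can be invoked.
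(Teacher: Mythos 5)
Your proposal takes a genuinely different route from the paper, and it has real gaps. The paper runs ``Donaldson's continuity method'' of conical K\"ahler--Einstein metrics with cone angle $2\pi\beta$ along a family of nice relative pluri-anticanonical divisors $\mathcal D\subset\mathcal X$, opening the angle up to $2\pi$, rather than the Aubin path $\mathrm{Ric}(\omega_{t,s})=s\omega_{t,s}+(1-s)\chi_t$. This is not a cosmetic difference: the compactness and partial-$C^0$/structure theory of Donaldson--Sun and \cite{CDS1,CDS2,CDS3} that you invoke is proved precisely in the conical setting, and the algebraic input (K-polystability of log pairs) plugs into that machinery cleanly. The paper's key intermediate steps are a uniform small-angle existence with $L^\infty$ potential bounds via alpha-invariant and subharmonicity of Ding/Mabuchi energies over the disc (Theorem \ref{thm1-3}), a Gromov--Hausdorff continuity statement under $L^\infty$ potential bounds (Theorem \ref{thm3-1}), and lower semi-continuity of the maximal angle $\beta_t$ (Proposition \ref{Semi-C}); none of these appear in your argument.

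The main gaps in your route are the following. First, when you extract a Gromov--Hausdorff limit from $\omega_{t_i,s_i}$ with $s_i\nearrow R(t_i)$, the limit metric satisfies a \emph{twisted} equation with parameter $R^*=\lim s_i$, which may be strictly less than $1$. An ``extension of the Donaldson--Sun structure theorem to this twisted, almost-KE setting'' is not available in the literature; the existing results require either genuine KE or conical KE along a divisor with a controlled angle, which is exactly why the paper introduces the divisor $\mathcal D$. Second, even granting the structure of the limit $Y$, your contradiction argument ``$Y$ admits a weak KE so its Donaldson--Futaki invariant vanishes'' does not apply when $Y$ carries only a twisted metric with $R^*<1$; so the argument does not actually show $R(t)\to 1$, and you cannot close the K-polystability step. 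The paper sidesteps this by working with pairs $(X_t,(1-\beta)D_t)$ for which K-polystability makes sense at every angle and one has a Berman-type converse (\cite{Ber2}), letting the continuity in $\beta$ interact with the degeneration in $t$.

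Third, for the second half of the theorem your proposal of an implicit function theorem ``in weighted H\"older spaces adapted to the log-terminal singularities'' is precisely the difficult analytic input that is \emph{not} available; as the paper emphasises in its introduction, the lack of precise asymptotics of weak (conical) KE metrics near $X_0^{\mathrm{sing}}$ is the main reason one cannot do deformation theory directly on $X_0$, and is the reason for the whole approximate-by-smooth-fibres strategy. The paper instead extracts the smooth KE metrics on $X_t$ and the Gromov--Hausdorff convergence $(X_t,\omega_t)\to(X_0,\omega_0)$ from the same cone-angle/$t$-degeneration argument (combined with $L^2$-embedding bounds and the removable-singularity result of \cite{CDS3}), without ever linearising at the singular central fibre.
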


Few remarks are in place. First, by the generalized Bando-Mabuchi uniqueness theorem \cite{BBEGZ} the above weak K\"ahler-Einstein metric $\omega_0$ is unique up to $Aut_0(X_0)$, the identity component of $Aut(X_0)$, thus can be viewed as a canonical metric on $X_0$. 
Second,  the above theorem does not just state that there is $\emph{a}$ sequence of nearby KE Fano manifolds which converges, in the Gromov-Hausdorff sense, to the weak KE metric, but that \emph{all} the nearby KE Fano manifolds actually converge to the unique singular limit $(X_0,\omega_0)$. Thus this property provides a good topological correspondence between complex analytic deformations (alias flat-families) and the notion of Gromov-Hausdorff convergence. For example, by the Bando-Mabuchi theorem we have the following immediate corollary:

\begin{cor}
Let $\pi: \mathcal X\rightarrow\Delta$ and $\pi': \mathcal X'\rightarrow\Delta$ be two $\Q$-Gorenstein smoothings of $\Q$-Fano varieties $X_0$ and $X_0'$. Suppose $X_t$ and $X_t'$ are bi-holomorphic for all $t\neq 0$, and $X_0$ and $X_0'$ are both K-polystable with discrete automorphism group, then $X_0$ and $X_0'$ are isomorphic varieties. 
\end{cor}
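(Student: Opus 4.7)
The plan is to combine the moreover part of Theorem \ref{thm1-1} with the Bando-Mabuchi uniqueness of K\"ahler-Einstein metrics, and then upgrade an isometry of Gromov-Hausdorff limits to an isomorphism of varieties via the Donaldson-Sun theory.

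First, applying Theorem \ref{thm1-1} to both families, we obtain weak K\"ahler-Einstein metrics $\omega_0$ on $X_0$ and $\omega_0'$ on $X_0'$, as well as smooth K\"ahler-Einstein metrics $\omega_t$ on $X_t$ and $\omega_t'$ on $X_t'$ for all $|t|$ sufficiently small, such that $(X_t,\omega_t)\to (X_0,\omega_0)$ and $(X_t',\omega_t')\to (X_0',\omega_0')$ in the Gromov-Hausdorff topology.

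Next, for each small $t\neq 0$, fix a biholomorphism $f_t:X_t\to X_t'$. Then both $\omega_t$ and $f_t^*\omega_t'$ are smooth K\"ahler-Einstein metrics on the smooth Fano manifold $X_t$, and they lie in the same K\"ahler class $2\pi c_1(X_t)$. By the classical Bando-Mabuchi theorem there exists an automorphism $\varphi_t\in Aut_0(X_t)$ with $\varphi_t^*f_t^*\omega_t'=\omega_t$, so that $f_t\circ\varphi_t:(X_t,\omega_t)\to (X_t',\omega_t')$ is an isometry of K\"ahler manifolds.

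Passing to the Gromov-Hausdorff limit along a subsequence $t_k\to 0$, and using the uniqueness of Gromov-Hausdorff limits (for a sequence of metric spaces all isometric to each other, up to pre-composition with the isometries $f_{t_k}\circ\varphi_{t_k}$), we conclude that $(X_0,\omega_0)$ and $(X_0',\omega_0')$ are isometric as compact metric spaces. By the main result of \cite{DS}, the complex analytic and in fact algebraic structure on a Gromov-Hausdorff limit of polarized K\"ahler-Einstein Fano manifolds is canonically determined by the metric (it is recovered from the tangent cones and the embedding by $L^2$-bounded pluri-anticanonical sections). Hence the metric isometry between $(X_0,\omega_0)$ and $(X_0',\omega_0')$ is induced by a biholomorphism of the underlying $\Q$-Fano varieties, so $X_0$ and $X_0'$ are isomorphic.

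The only genuine subtlety is the last step: ensuring that an abstract isometry of the Gromov-Hausdorff limits actually corresponds to a biholomorphism. This is the place where one really needs to invoke the Donaldson-Sun structural description of the limit rather than just treating the limit as a metric space; the rest of the argument is a direct combination of the moreover part of Theorem \ref{thm1-1} with Bando-Mabuchi uniqueness.
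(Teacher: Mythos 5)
Your overall strategy is the same as the paper's: apply the ``moreover'' part of Theorem~\ref{thm1-1} to both families, use Bando--Mabuchi to turn the biholomorphisms $f_t$ into biholomorphic isometries $(X_t,\omega_t)\to(X_t',\omega_t')$, and let $t\to 0$. Up to that point the argument is fine.

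The final step, however, is justified by a claim that is not literally true and that the paper itself cautions against. You assert that ``the complex analytic and in fact algebraic structure on a Gromov--Hausdorff limit of polarized K\"ahler--Einstein Fano manifolds is canonically determined by the metric,'' and from this deduce that an abstract metric isometry between $(X_0,\omega_0)$ and $(X_0',\omega_0')$ must come from a biholomorphism. But a metric space does not determine the complex structure: as the remark on ``refined GH topology'' in Section~4 points out (citing \cite{SpTh}), two non-biholomorphic KE Fano manifolds can be isometric --- for instance $Y$ and $\overline Y$ carry isometric KE metrics. So an argument that first forgets the complex structure (passing to the abstract metric isometry) and then tries to recover it from the metric alone has a genuine hole.

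The correct way to close the argument is to never discard the holomorphic structure. The convergence in Theorem~\ref{thm1-1} is ``in the sense of \cite{DS}'' --- i.e.\ the refined GH convergence that remembers the complex (indeed algebraic) structure on the limit; equivalently, one realizes the convergence inside a fixed Chow variety via $L^2$-orthonormal plurianticanonical embeddings, exactly as the paper sets up in Section~4. Since $f_t\circ\varphi_t$ is a biholomorphic isometry, the sequences $(X_t,\omega_t)$ and $(X_t',\omega_t')$ represent the same point of the KE moduli space for every small $t\neq 0$, hence have the same refined GH limit; and by uniqueness of the refined limit this forces $X_0\cong X_0'$ as varieties. Alternatively, one can take a subsequential limit of the holomorphic isometries $f_t\circ\varphi_t$ themselves (they lie in a fixed $PGL$ modulo the $L^2$ embeddings) and obtain a limiting biholomorphism directly. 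Either route avoids the ``recover holomorphic structure from the bare metric'' step that your write-up leans on.
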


Notice that Corollary 1.2 would also follow from the separatedness of general polarized K-stable varieties, which is attempted purely algebraically by Odaka and Thomas \cite{OT}.

It is also worth noting that this kind of continuity at the boundary may be seen as an higher dimensional generalization in the KE case of the glueing results obtained by Spotti in \cite{Spotti} and Biquard-Rollin in \cite{BR}, but our proof here follows a very different approach. Another important point to remark, especially in view of applications to moduli spaces, is that the converse of our theorem, that is the fact that weak KE Fano varieties are indeed K-polystable was proved (without assuming the smoothability hypothesis) by Berman \cite{Ber2}.  Theorem \ref{thm1-1} should be also compared with the result on the existence of KE metrics and K-polystability on semi-log-canonical varieties with ample canonical $\mathbb{Q}$-bundle obtained respectively by Berman-Guenancia in \cite{BG} and by Odaka in \cite{Odaka1} and \cite{Odaka2}  (although the reason for the KE/K-polystability correspondence in this latter case is not fully clear).
Finally we remark that in a subsequent paper we will consider the very natural generalization  of Theorem \ref{thm1-1} to the case when $Aut(X_0)$ is not necessarily discrete with its connections to geometric invariant theory.

We now briefly describe the structure of this paper along with a sketch of the main arguments needed to prove our main Theorem \ref{thm1-1}. The strategy of the proof is based on the ``Donaldson's continuity method'' of deforming cone angle metrics along plurianticanonical divisors on the singular central fiber, a method which now makes the additional use of the better understood KE geometry in the nearby smooth fibers. In fact, one of the main issue in carring directly the deformation strategy on a singular variety is the lack of precise understanding of how a \emph{weak conical KE metric} (see definition below in the beginning of Section \ref{section2}) behaves near the singularities. For example, the elliptic theory required to prove ``openness'' is now missing. To overcome these difficulties our strategy consists, very roughly speaking, of studying the behavior on the singular fibers by approximating it with what happens in the nearby smooth varieties. The main argument uses several results which we now state and which, we think, may hold some interests in their own.

The first theorem, which is the content of Section $2$, shows that one can indeed construct a family of relative divisors for which the existence of weak conical KE metrics, with positive scalar curvature,  holds for angles close to the log-Calabi-Yau range. Moreover, an $L^{\infty}$-estimate (relative to the ambient Fubini-Study metric), which is important for later arguments, for the potentials of the \emph{conical K\"ahler-Einstein metrics} is obtained. 
The statement of the theorem reads as follows.

\begin{thm} \label{thm1-3}
Let $\pi:\mathcal{X}\rightarrow \Delta$ be a $\Q$-Gorenstein smoothing of a $\Q$-Fano variety, and suppose we have chosen an embedding $\X\subset \C\P^N\times\C$ (thus fixing a background Fubini-Study metric $\omega_{FS}$) as above.  Then, eventually shrinking $\Delta$, there exists a relative $\lambda$-plurianticanonical divisor $\mathcal{D}$,  smooth over $\Delta^*=\Delta\setminus\{0\}$, such that $(X_0, (1-\beta)D_0)$ is KLT for all $\beta\in (0, 1)$ (this is called a ``nice'' relative $\lambda$-plurianticanonical divisor in our context), and a  $\underline{\beta}\in (1-\lambda^{-1}, 1)$ such that for any $\beta \in (1-\lambda^{-1},\underline{\beta}]$, there is a weak conical K\"ahler-Einstein metric $\omega_{t, \beta}$ on $(X_t, (1-\beta)D_t)$ for all $t\in \Delta$, which is genuinely conical (in the sense of \cite{CDS1}) for $t\neq 0$ and such that $\omega_{t, \beta}=\omega_{t, FS}+\sqrt{-1}\p\bp\phi_{t,\beta}$ (here $\omega_{t, FS}=\frac{1}{\lambda}i_t^*\omega_{FS}$) with $|\phi_{t,\beta}|_{L^\infty}$ is uniformly bounded in $t$ for any fixed $\beta$.
\end{thm}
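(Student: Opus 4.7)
The plan has three parts: construct a suitable relative divisor $\mathcal D$; establish conical K\"ahler-Einstein existence on each smooth fiber via known results in the near-log-Calabi-Yau regime; and pass to the central fiber by means of a uniform $L^\infty$ estimate of Ko\l odziej type. For the first part, via the relatively very ample embedding, members of $|-\lambda K_{\X/\Delta}|$ correspond to hyperplane sections in $\P^{N(\lambda)}$, so Bertini applied fiberwise produces a relative divisor smooth over $\Delta^*$. To additionally arrange $(X_0, c D_0)$ KLT for every $c \in (0,1)$, one restricts to hyperplanes whose restriction to the central fiber is a generic member of the base-point-free system $|-\lambda K_{X_0}|$; since $X_0$ is itself KLT, a standard Bertini-for-log-pairs argument (via multiplier ideals applied on a log-resolution of $X_0$) delivers the KLT property of the pair for every $c < 1$. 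Possibly shrinking $\Delta$ then yields the desired nice $\mathcal D$.

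For each $\beta \in (1 - \lambda^{-1}, 1)$ and each $t \in \Delta^*$ the conical KE equation on $(X_t, (1-\beta)D_t)$ can be written as the twisted complex Monge-Amp\`ere equation
\[
(\omega_{t,FS} + \sqrt{-1}\,\partial\bar\partial \phi)^n \;=\; C_t\, e^{-\mu_\beta \phi}\, |s_{D_t}|_{h_t}^{-2(1-\beta)}\, \Omega_t,
\]
with $\mu_\beta := 1 - (1-\beta)\lambda$, $s_{D_t}$ a defining section of $D_t$, $h_t$ a reference Hermitian metric on $K_{X_t}^{-\lambda}$ (coming from $\omega_{t,FS}$), and $\Omega_t$ a smooth volume form. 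For $\beta$ close to $1 - \lambda^{-1}$, i.e.\ $\mu_\beta > 0$ small, one is in the near-log-Calabi-Yau regime: on smooth Fano manifolds, properness of the twisted Ding/Mabuchi functional (Berman) and the direct conical continuity methods (Jeffres-Mazzeo-Rubinstein, \cite{CDS1}) provide a genuinely conical solution $\phi_{t,\beta}$. Inspecting the underlying coercivity or $\alpha$-invariant bounds, which depend only on uniform data of the family (degree, ambient $\P^{N(\lambda)}$, and the fixed $\mathcal D$), one obtains a threshold $\underline\beta$ valid uniformly in $t \in \Delta^*$.

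The remaining, and most delicate, point is the uniform estimate $\|\phi_{t,\beta}\|_{L^\infty} \le C(\beta)$. The plan is a pluripotential argument carried out in the ambient $\P^{N(\lambda)}$ rather than on each singular fiber individually. The crucial input is that $|s_{D_t}|^{-2(1-\beta)}$ lies in $L^p(\omega_{FS}^N)$ with a bound uniform in $t$ for some $p > 1$: this is precisely where the uniform KLT property of $(\mathcal X, (1-\beta)\mathcal D)$ produced in the first paragraph is used, together with the openness of log-canonical thresholds in families. A Ko\l odziej-type $L^\infty$ estimate, applied relative to the Fubini-Study background, then delivers the uniform bound, with the exponential factor $e^{-\mu_\beta \phi}$ absorbed by a standard maximum-principle argument exploiting $\mu_\beta > 0$. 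Given this estimate, a weak limit $\phi_{t_i,\beta} \to \phi_{0,\beta}$ exists as $t_i \to 0$ in the $L^1$ topology on $\P^{N(\lambda)}$, and stability of the complex Monge-Amp\`ere operator on $\Q$-Fano varieties \cite{EGZ} identifies the limit as a solution of the weak conical KE equation on $(X_0, (1-\beta)D_0)$. The main obstacle is to make Ko\l odziej's estimate genuinely uniform across the degeneration to the singular central fiber, which forces one to work in the ambient projective space and to verify that the uniform KLT bound survives the passage $t \to 0$.
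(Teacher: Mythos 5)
Your plan for the divisor construction and the uniform small-angle existence threshold tracks the paper reasonably well: the appendix of the paper also uses Bertini-type arguments together with Koll\'ar's results on discrepancies to produce the nice relative divisor, and the uniform $\underline\beta$ is indeed obtained through alpha-invariant / global log canonical threshold estimates that depend only on the embedding data (the paper packages this through an observation of Odaka, Mustata's bound on $\text{lct}$ in terms of multiplicity, and BBEGZ).

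The genuine gap is in the uniform $L^\infty$ estimate. You propose to apply Ko\l odziej's estimate in the ambient projective space and to ``absorb'' the exponential factor $e^{-\mu_\beta\phi}$ by ``a standard maximum-principle argument exploiting $\mu_\beta>0$.'' This is the wrong sign. In the Monge-Amp\`ere equation with right-hand side $e^{-\mu_\beta\phi}$ and $\mu_\beta>0$ (the Fano-type sign), the maximum principle does \emph{not} produce an a priori $C^0$ bound on $\phi$; the favorable sign occurs precisely when the constant is negative (general type / Aubin--Yau) or zero (Calabi--Yau). With $\mu_\beta>0$, controlling $e^{-\mu_\beta\phi}$ in $L^p$ to feed into Ko\l odziej already requires a lower bound on $\phi$, so the argument is circular. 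The only way to break this circle is an energy-functional argument (properness/coercivity of the log-Ding or log-Mabuchi functional), but one then needs coercivity constants that are \emph{uniform in $t$}, and the paper explicitly points out that they do not know how to establish the semicontinuity of the alpha-invariant quantities that would deliver this uniformity. Instead, the paper takes a genuinely different and much more involved route: it solves a degenerate Dirichlet problem for a homogeneous complex Monge-Amp\`ere equation on the total space $\mathcal{X}_r$ (Phong--Sturm), writes the log-Ding functional of the boundary data as a function of $t$, proves it is continuous and subharmonic on the punctured disc and across $t=0$ using Berndtsson's positivity of direct image bundles and Deligne-pairing positivity, and then applies the maximum principle \emph{in the base disc} to transfer the lower bound of the Ding functional from $t\ne 0$ to $t=0$. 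Only after this (together with a separate analysis of the log-Mabuchi functional via a log resolution of the total family, Proposition~\ref{prop2-20}) do they get uniform coercivity and hence the $L^\infty$ bound through the $I$-functional and Chern--Lu. Your sketch does not contain this mechanism, and the shortcut you propose would not close.

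A secondary, smaller issue: the passage to the central fiber needs the uniqueness of the weak conical KE on $(X_0,(1-\beta)D_0)$ (from BBEGZ together with discreteness of $\mathrm{Aut}(X_0,D_0)$, which the paper records in Remark~\ref{rmk2-7}) to identify the weak limit of $\phi_{t,\beta}$ with the solution on $X_0$, not merely stability of the Monge--Amp\`ere operator. Without this one only knows that some subsequential limit solves a weak conical KE equation, not that it is \emph{the} one produced by the alpha-invariant argument on $X_0$.
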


In Section \ref{section3} we show that the $L^\infty$-estimate obtained in the above theorem is indeed sufficient to prove that the family of conical KE metrics $\omega_{t,\beta}$ on $(X_t, (1-\beta) D_t)$  converges in the Gromov-Hausdorff sense as $t$ tends to zero (at the fixed angle $\beta$) to the weak conical KE metric $\omega_{t,\beta}$ on $(X_0,(1-\beta)D_0)$. 

This last Gromov-Hausdorff continuity result is the starting point of the actual continuity type argument which we run in Section \ref{section4}. The first preliminary theorem is the following slightly technical, but useful, result.
 Given a $\Q$-Gorenstein smoothing with a ``nice'' $\lambda$-anticanonical divisor $(\mathcal{X},\mathcal{D}) \rightarrow \Delta$ we define the function
$$\beta_t:= \sup\{\beta\in (1-\lambda^{-1},1] \,|\, \mbox{there exists a weak conical KE with cone angle } 2\pi\beta \mbox{ on } (X_t,D_t)\}.$$
Note that by Theorem \ref{thm1-3} proved in Section \ref{section2} we know that, eventually shrinking the disc, $\beta_t$ is uniformly bounded from below by $\underline{\beta} > 1-\lambda^{-1}$. Besides, it satisfies \emph{lower-semi-continuity}: 
\begin{thm}\label{thm1-4}
$\beta_t$ is a lower-semi-continuous function of $t \in \Delta$.
\end{thm}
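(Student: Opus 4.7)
The plan is to prove $\liminf_{t \to t_0} \beta_t \geq \beta_{t_0}$ at each $t_0 \in \Delta$ by splitting into the smooth and the central-fiber cases. Given $\epsilon > 0$, the defining supremum produces some $\beta' \in (\beta_{t_0} - \epsilon, \beta_{t_0}]$ admitting a weak conical K\"ahler-Einstein metric $\omega_{t_0, \beta'}$ on $(X_{t_0}, (1-\beta')D_{t_0})$.

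First, if $t_0 \in \Delta^* = \Delta \setminus \{0\}$, then $X_{t_0}$ is a smooth Fano manifold and, by the niceness of the relative divisor $\mathcal{D}$, $D_{t_0}$ is smooth. Consequently $\omega_{t_0, \beta'}$ is a genuine conical KE metric in the sense of \cite{CDS1}. I would then invoke the smooth openness theorem for the conical KE equation, namely the implicit function theorem in Donaldson's weighted H\"older spaces as used in \cite{CDS2}, upgraded in the standard way to allow simultaneous variation of the cone angle parameter $\beta$ and of the complex structure along the family $(\mathcal{X},\mathcal{D}) \to \Delta$. This yields a family of conical KE metrics $\omega_{t,\beta}$ on a neighborhood of $(t_0, \beta')$ in $\Delta \times (1-\lambda^{-1}, 1]$, so that $\beta_t \geq \beta' - \delta > \beta_{t_0} - \epsilon$ for all $t$ sufficiently close to $t_0$.

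The harder case is $t_0 = 0$, where smooth openness is not available at the singularities of $X_0$ and so existence has to be propagated to nearby smooth fibers by a more indirect route. If $\beta_0 \leq \underline{\beta}$ there is nothing to prove, since Theorem \ref{thm1-3} already furnishes $\beta_t \geq \underline{\beta}$ uniformly in $t$, so one may assume $\beta' \in (\underline{\beta}, \beta_0)$. I would argue by contradiction: suppose a sequence $t_n \to 0$ with $\beta_{t_n} \leq \beta_* < \beta'$. Applying the smooth-fiber openness from the previous case at each $t_n$, pick conical KE metrics $\omega_{t_n, \beta_n}$ with $\beta_n \nearrow \beta_{t_n}$. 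A standard compactness argument, relying on bounded diameter, volume, and the uniform Ricci lower bound coming from $\beta_n > 1 - \lambda^{-1}$, extracts a subsequential Gromov-Hausdorff limit $(Y, \omega_\infty)$ at some angle $\beta_\infty \leq \beta_*$ carrying a weak conical KE metric. Using the algebraic identification of GH limits in the spirit of \cite{DS} together with the flatness of the family and the uniqueness of the K-polystable limit of the germ, $(Y, D_Y)$ must be isomorphic to $(X_0, D_0)$. On $(X_0,D_0)$ one then has weak conical KE metrics at both $\beta_\infty$ and $\beta'$; interpolating in $\beta$ via the generalized Bando-Mabuchi uniqueness \cite{BBEGZ} and perturbing back onto nearby smooth fibers, one produces conical KE on $(X_{t_n},D_{t_n})$ at angles strictly above $\beta_{t_n}$, contradicting maximality.

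The step I expect to be the main obstacle is precisely the identification of the Gromov-Hausdorff limit $Y$ with the central fiber $X_0$ itself, rather than some a priori different K-polystable degeneration: without it, the maximal angle could legitimately drop because the underlying variety changes in the limit. This is where the uniform $L^\infty$ bound on the potentials provided by Theorem \ref{thm1-3} and the Gromov-Hausdorff continuity at small cone angles established in Section \ref{section3} are needed, as they rigidify the limit to lie over the given family rather than over an abstract degeneration.
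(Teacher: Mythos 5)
Your treatment of the smooth case $t_0 \in \Delta^*$ matches the paper's: Donaldson's implicit function theorem for conical KE together with the discreteness of $\mathrm{Aut}(X_t,D_t)$ gives openness in $(t,\beta)$. The trouble is the case $t_0 = 0$, where your argument has two genuine gaps.

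First, the identification of the Gromov-Hausdorff limit $(Y,D_Y)$ with $(X_0,D_0)$ is asserted rather than proved. You flag this as ``the main obstacle,'' but the tools you invoke do not close it: Theorem~\ref{thm1-3} supplies a uniform $L^\infty$ bound on potentials only for angles $\beta\leq\underline\beta$, whereas the diagonal limit you form lives at $\beta_\infty$, which may be well above $\underline\beta$; and GH continuity from Section~\ref{section3} is conditional on already having such $L^\infty$ bounds, which is exactly what is unknown at angle $\beta_\infty$. The paper resolves this by an argument you do not reproduce: it embeds all the conical KE pairs into a fixed Chow variety $\mathbf{Ch}/U(N)$, considers the set $\mathcal{C}$ of \emph{all} limits with angle $2\pi\beta_\infty$, shows $(X_0,(1-\beta_\infty)D_0,\omega_{0,\beta_\infty})\in\mathcal{C}$ and is isolated in $\mathcal{C}$ (Claim 1, controlling the $I$-functional jump), and then proves $\mathcal{C}$ is connected via the point-set topology Lemma~\ref{connected} (Claim 2) to conclude $\mathcal{C}$ is a singleton. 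Without something playing the role of this connectedness argument, nothing forces your subsequential limit $Y$ to coincide with $X_0$ rather than some other degeneration.

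Second, the final step --- ``perturbing back onto nearby smooth fibers'' to obtain conical KE metrics on $(X_{t_n},D_{t_n})$ at angles above $\beta_{t_n}$ --- would not go through. You would need an openness statement across a flat deformation whose central fiber is the singular variety $X_0$, which is precisely what is unavailable: as stressed in the introduction, the elliptic theory on the singular central fiber is missing, so one cannot invoke an implicit function theorem starting from a weak conical KE on $X_0$. The paper avoids this entirely; its contradiction is of a different nature. By \cite{CDS3}, for $t_i\neq 0$ the metrics $\omega_{t_i,\beta}$ degenerate as $\beta\nearrow\beta_{t_i}$ to a pair $(W_{t_i},\Delta_{t_i})$ carrying a nontrivial one-parameter subgroup of automorphisms; feeding these degenerate pairs into $\mathcal{C}=\{(X_0,(1-\beta_\infty)D_0,\omega_{0,\beta_\infty})\}$ contradicts the discreteness of $\mathrm{Aut}(X_0,D_0)$ (guaranteed by Remark~\ref{rmk2-7}). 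Your proposed contradiction via ``maximality'' relies on a perturbation step that fails, while the paper's contradiction via automorphism groups is what actually closes the argument.
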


Actually, since a-priori we do not know that the set of angle values of weak conical KE metrics on a singular variety is connected, we will first prove the above theorem for a slight variation of the previously defined function. However, in the end the statement turns out to be true, so we avoid further going into such technicality in this introduction (but see Section \ref{section4} for more precise definitions and statements). 

With these results in hands, we are now able to perform  the desired ``Donaldson's continuity method'', that is to prove \emph{openness} and \emph{closedness} of the set of angles for \emph{weak conical KE metrics} on the central fiber under the hypothesis of K-polystability. The openness part makes use of the idea in \cite{Yao} and some of the topological arguments required are somehow reminiscent of the ones in Odaka-Spotti-Sun \cite{OSS}. A discussion on the needed definition of refined GH topology is given along the proofs. 
We remark that the \emph{weak KE metrics} on the central fiber is constructed as, roughly speaking, a diagonal limit of conical KE metrics on smooth pairs $(X_t, (1-\beta(t))D_t)$, where, as the complex parameter $t$ tends to zero, the cone angles $2 \pi \beta(t)$ open up to $2\pi$. Under the additional  hypothesis of discreteness of $Aut(X_0)$ one can easily observe that the same reasonings actually imply that $\beta_t$ is identically equal to $1$ and it is actually achieved as a maximum. This will conclude the proof of our main Theorem \ref{thm1-1}.

In the appendix we include a proof of the existence of ``nice'' relative anticanonical divisors required in our arguments.

 While we are finishing this paper, we learned that Chi Li, Xiaowei Wang, and Chenyang Xu \cite{LWX} have independently proved a result that is closely related to the results obtained here. Their method is more algebraic and the results are not exactly the same. We feel the combination of the two results would yield a better understanding of the structure of the compactified moduli space of K\"ahler-Einstein Fano manifolds.

\subsection*{Acknowledgements}
C. S.  carried part of this research under a ``Carmin fellowship'' at IHP $\&$ IHES and a grant from the project ANR-10-BLAN 0105 at the ENS Paris. He would in particular thank Professor Olivier Biquard for several stimulating discussions. S. S.  is partly supported by NSF grant 1405832 and Alfred P. Sloan research fellowship, and by the Simons Center for Geometry and Physics during the year 2013-2014. C-J. Y. is very grateful to his advisor Xiuxiong Chen for constant support and encouragement. We all would like to thank Professor Eric Bedford and Dr. Long Li for valuable discussions on some points, and Yuji Odaka for helpful comments on the draft of this paper.

\section{Small angle existence and  $L^\infty$-bound on the potentials}\label{section2}

We first recall the definitions of the various relevant functionals in K\"ahler geometry which we use in this section. Let $X$ be a  smooth Fano manifold, and $D$ be a smooth divisor in $|-\lambda K_X|$ for some $\lambda>1$ with defining section $S$. Given a smooth K\"ahler metric $\omega\in 2\pi c_1(X)$, then it is the curvature form of  a Hermitian metric $h$ on $K_X^{-1}$ (defined up to a constant multiple). A choice of $h$ can also be viewed as a choice of a smooth volume form $\text{vol}_h$ on $X$, or equivalently, a choice of the \emph{Ricci potential} $h_\omega$ of $\omega$ (that is to say $\text{Ric }\omega=\omega+\sqrt{-1}\p\bp h_\omega$ by the relation $\vol_h =e^{h_\omega}\frac{\omega^n}{n!}$). These can be extended to the more general case when $X$ is a $\mathbb Q$-Fano variety, see \cite{EGZ}. For example, the notion of \emph{smooth K\"ahler metrics} on $X$ is still well-defined, under which the Fubini-Study metric restricts to is a smooth K\"ahler metric if $X$ is embedded in projective space.

Denote by $PSH(X, \omega)$ the space of $\omega$-plurisubharmonic functions on $X$. 
For $\beta\in (0, 1]$ we define $r(\beta)=1-(1-\beta)\lambda$ and denote $V=(2\pi)^n \frac{<c_1(X)^n,[X]>}{n!}$ (this is a fixed topological quantity all through this paper). 

We first remark that we have  different notions of being K\"ahler-Einstein depending on the situation we are considering (we refer to \cite{CDS3} Section 4, for the precise definitions): a \emph{smooth K\"ahler-Einstein metric} is the standard one on a smooth manifold; a (genuinely) \emph{conical K\"ahler-Einstein metric} is defined for a pair $(X, (1-\beta)D)$ of a smooth K\"ahler manifold and a smooth divisor (called K\"ahler-Einstein metrics with cone singularities in \cite{Do}); a \emph{weak K\"ahler-Einstein metric} is defined on a $\Q$-Fano variety $X$ \cite{EGZ}; a \emph{weak conical K\"ahler-Einstein metric} is defined on a KLT pair $(X, (1-\beta)D)$ (see \cite{BBEGZ}). Accordingly, we have the notion of \emph{K-polystability} for such a KLT pair, see \cite{CDS3}.

In particular, the \emph{weak conical K\"ahler-Einstein metric} used in this paper means a K\"ahler current $\omega_\phi=\omega+\sqrt{-1}\p\bp \phi$ (where $\omega$ is a smooth K\"ahler metric on $X$, and $\phi$ is smooth on $X^{reg}\backslash D$ and locally continuous near $X^{sing}\cup D$), which satisfies the current equation:
\begin{equation}\label{eqn2-1}
\text{Ric } \omega_\phi=r(\beta)\omega_\phi+2\pi(1-\beta) [D]
\end{equation}
 in a suitable sense or, equivalently, the complex Monge-Amp$\grave{\text{e}}$re equation
\begin{equation}\label{eqn2-2}
\omega_\phi^n=n!V\frac{e^{-r(\beta)\phi} |S|_h^{2\beta-2}\text{vol}_h}{\int_X e^{-r(\beta)\phi} |S|_h^{2\beta-2}\text{vol}_h}.
\end{equation}
where $S$ is the defining section of $D$, and $h$ on $L_D$ is the natural Hermitian metric induced from the one on $K_{X}^{-1}$.

We should remark that in the case when $(X, D)$ is a smooth pair, the \emph{weak conical K\"ahler-Einstein metric} is actually genuinely conical by the work of \cite{CGP, GP, JMR, LS, Y1, CW}. 

Recall the various functionals that are needed, see \cite{LS} for a collection of them. From now on, fix a smooth K\"ahler metric on the $\mathbb{Q}$-Fano variety $X$ and fix a Hermitian metric $h$ on the $\mathbb{Q}$-line bundle $K_X^{-1}$.

\begin{defn}[log-Ding-functional]\label{log-Ding-functional}  For $\phi\in L^\infty(X)\cap PSH(X,\omega)$, we define
$$F_{\omega,(1-\beta)D}(\phi):=F_{\omega}^0(\phi)+F_{\omega}^1(\phi), $$ 
where 
$$F_{\omega}^0(\phi):=-\frac{1}{(n+1)!}\sum_{i=0}^n \int_{X} \phi(\omega+\sqrt{-1}\partial\bar\partial \phi)^i\wedge\omega^{n-i},$$
$$F_\omega^1(\phi):=-\frac{V}{r(\beta)}\log \frac{1}{V}\int_{X} e^{-r(\beta)\phi}|S|_h^{2\beta-2}\text{vol}_h.$$
The first term is well-defined by the pluri-potential theory \cite{BBEGZ} and the second one makes sense when $(X, (1-\beta)D)$ is KLT. The critical point equation for this \emph{log-Ding-functional} is precisely the above Equation \ref{eqn2-2}.
\end{defn}

Under the assumption that $(X,D)$ are \emph{smooth}, we could further define the \emph{log-Mabuchi-functional}. Firstly, define $ C^{1,1}(X):=\bigcup_{\beta\in(0,1)}C^{1, 1}_\beta(X)$, where $C^{1, 1}_\beta(X)$ denotes the space of all functions $\phi$ which is $C^2$ on $X^{reg}\backslash D$ with $\omega+i\p\bp\phi\geq 0$, and locally around each point $p$ in $D$, we have $-C\omega_{(\beta)}\leq \omega+i\p\bp \phi\leq C\omega_{(\beta)}$ for some $C>0$, where $\omega_{(\beta)}$ is a standard model conical K\"ahler metric in a neighborhood of $p$. 


\begin{defn}[log-Mabuchi-functional]\label{def2-2}
Suppose that $(X, D)$ is smooth and $\omega$ is smooth, let 
$$\text{Ric }\omega=r(\beta)\omega+2\pi (1-\beta)[D]+\sqrt{-1}\p\bp H_{\omega, (1-\beta)D}$$
where $H_{\omega, (1-\beta)D}=h_\omega-\log |S|_h^{2-2\beta}$. For $\phi\in PSH(X, \omega)$ which is in $ C^{1, 1}(X)$,
 we define
 $$M_{\omega,(1-\beta)D}(\phi):=\int_{X} \log \frac{\omega_\phi^n}{e^{H_{\omega, (1-\beta)D}}\omega^n}\frac{\omega_\phi^n}{n!}
-r(\beta)(I-J)_\omega(\phi)+\int_{X} H_{\omega, (1-\beta)D} \frac{\omega^n}{n!}$$
where
$$I_\omega(\phi):=\int_X \phi\frac{(\omega^n-\omega_\phi^n)}{n!},$$
$$J_\omega(\phi):=\int_X \phi\frac{\omega^n}{n!}-\frac{1}{(n+1)!}\sum_{i=0}^n\int_X \phi(\omega+\sqrt{-1}\partial\bar\partial\phi)^i\wedge\omega^{n-i}.$$
\end{defn}

Both functionals can be defined for more general class of functions, see \cite{Ber1}, \cite{BBEGZ}. Here we only deal with the spaces that suit our later purpose. 
There is a simple relation between these two functionals, originally due to Tian, which is needed later. 

\begin{prop}[\cite{Ber1},\cite{Tian}] \label{logDM}
$$M_{\omega,(1-\beta)D}(\phi)\geq r(\beta)F_{\omega, (1-\beta)D}(\phi)+\int_X H_{\omega, (1-\beta)D}\frac{\omega^n}{n!}$$
\end{prop}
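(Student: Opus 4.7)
The plan is to establish this inequality by algebraic rearrangement followed by a single application of Jensen's inequality (equivalently, non-negativity of relative entropy), in the spirit of Tian's original argument. First, combining the definitions of $I_\omega$, $J_\omega$, and $F^0_\omega$, a short computation yields the identity
$$(I-J)_\omega(\phi) \;=\; -\int_X \phi\,\frac{\omega_\phi^n}{n!} \;-\; F^0_\omega(\phi),$$
which substituted into the formula of Definition \ref{def2-2} gives
$$M_{\omega,(1-\beta)D}(\phi) \;=\; \int_X \log\frac{\omega_\phi^n}{e^{H}\omega^n}\,\frac{\omega_\phi^n}{n!} + r(\beta)\int_X \phi\,\frac{\omega_\phi^n}{n!} + r(\beta)F^0_\omega(\phi) + \int_X H\,\frac{\omega^n}{n!},$$
where I abbreviate $H := H_{\omega,(1-\beta)D}$. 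Subtracting $r(\beta)F_{\omega,(1-\beta)D}(\phi) = r(\beta)(F^0_\omega+F^1_\omega)(\phi)$ on both sides, the proposition reduces to the inequality
$$\int_X \log\frac{\omega_\phi^n}{e^H\omega^n}\,\frac{\omega_\phi^n}{n!} \;+\; r(\beta)\int_X \phi\,\frac{\omega_\phi^n}{n!} \;\geq\; r(\beta)\,F^1_\omega(\phi).$$

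Next, I would unpack the right-hand side. Since $H = h_\omega - \log|S|_h^{2-2\beta}$ gives $e^H\omega^n/n! = |S|_h^{2\beta-2}\,\vol_h$, one has
$$r(\beta)F^1_\omega(\phi) \;=\; -V\log\frac{Z}{V}, \qquad Z \;:=\; \int_X e^{-r(\beta)\phi+H}\,\frac{\omega^n}{n!}.$$
Introducing the two probability measures
$$d\mu \;:=\; \frac{\omega_\phi^n}{n!\,V}, \qquad d\nu \;:=\; \frac{e^{-r(\beta)\phi+H}\,\omega^n}{n!\,Z}$$
on $X$, a direct rearrangement shows that the reduced inequality above is precisely equivalent to $V\cdot \mathrm{Ent}(\mu\,|\,\nu)\geq 0$, where $\mathrm{Ent}(\mu\,|\,\nu) := \int_X \log(d\mu/d\nu)\,d\mu$ denotes the relative entropy of $\mu$ with respect to $\nu$. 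Non-negativity of relative entropy, a direct consequence of Jensen's inequality applied to the concave function $\log$ against the probability measure $\mu$, then yields the claimed bound. Equality holds if and only if $\mu=\nu$, which after unwinding is exactly the conical K\"ahler-Einstein equation (\ref{eqn2-2}), recovering the well-known equality case.

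The only point requiring care is the justification of the algebraic manipulations for potentials in the class $C^{1,1}(X)$ introduced just before Definition \ref{def2-2}. The integration-by-parts identity for $(I-J)_\omega$ rests on interpreting the mixed Monge-Amp\`ere products $\omega_\phi^i\wedge\omega^{n-i}$ as well-defined currents, which follows from Bedford-Taylor theory using the uniform two-sided bound $-C\omega_{(\beta)}\leq\omega_\phi\leq C\omega_{(\beta)}$ built into the definition of $C^{1,1}(X)$. Similarly, the entropy integrand $\log(\omega_\phi^n/e^H\omega^n)$ is smooth on $X\setminus D$ and integrable against $\omega_\phi^n$ by virtue of the KLT condition on $(X,(1-\beta)D)$, and the term $\int_X H\,\omega^n/n!$ is finite because $\log|S|_h$ is $L^p$ for all $p<\infty$ with respect to any smooth volume form. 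These technical points are standard in the pluripotential framework of \cite{Ber1,BBEGZ}, so no new input beyond those references is needed; the substance of the proof is the single Jensen step above.
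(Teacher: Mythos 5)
Your proof is correct, and the paper itself supplies no argument for this proposition, citing it directly to [Ber1] and [Tian]. Your route, namely the algebraic identity $(I-J)_\omega(\phi) = -\int_X \phi\,\omega_\phi^n/n! - F^0_\omega(\phi)$, the observation that $e^{H}\omega^n/n! = |S|_h^{2\beta-2}\vol_h$, and the recognition of the residual expression as $V\cdot\int_X \log(d\mu/d\nu)\,d\mu \geq 0$ via Jensen, is exactly Berman's entropy-based formulation, and the equality case you identify (the conical Monge-Amp\`ere equation (\ref{eqn2-2})) is the correct one.
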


From now on we return to the setting of Theorem \ref{thm1-1}. Namely we consider a $\mathbb{Q}$-Gorenstein smoothing $\mathcal{X}$ of a $\mathbb{Q}$-Fano variety $X_0$. We further assume the existence of a  ``nice'' divisor $\mathcal{D}$ possessing the property claimed in the first half of Theorem \ref{thm1-3} (see the Appendix for a proof). 

In the next paragraphs we are going to show the existence in family of (weak) conical KE metrics for sufficiently small (but uniform) values of the cone angles.

\subsection{An alpha-invariant estimate and uniform small angle of existence}

Let $X$ be a $\mathbb{Q}$-Fano variety, and $D\in|-\lambda K_X|$ for $\lambda>1$ such that $(X, (1-\beta)D)$ is KLT for all $\beta\in (0,1]$, let $\omega\in 2\pi c_1(X)$ be a smooth K\"ahler metric on $X$. Recall the definition of the generalized alpha-invariant \cite{BBEGZ}: 
$$\alpha(K_X^{-1}, (1-\beta)D):=\sup\{\alpha>0| \sup_{\sup \phi=0; \omega+\sqrt{-1}\partial\bar\partial \phi\geq 0} 
\int_{X} e^{-\alpha \phi}|S|_h^{2\beta-2} \text{vol}_h <+\infty\},$$
 
When $\beta=1$, we denote this invariant simply by $\alpha(K_X^{-1})$. An easy H\"older's inequality argument shows the following, see also Remark 2.25 in \cite{LS}:

\begin{prop} \label{prop2-4}
 $\alpha(K_{X}^{-1}, (1-\beta)D)\geq \beta\alpha(K_X^{-1})>0$.
\end{prop}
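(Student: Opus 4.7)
The plan is to prove Proposition 2.4 via H\"older's inequality, splitting the integral defining the twisted alpha-invariant into two factors: one controlled by the ordinary alpha-invariant of $K_X^{-1}$, the other controlled by the KLT hypothesis on $(X,(1-\beta')D)$ for an auxiliary parameter $\beta'$.

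Concretely, fix $\alpha < \beta\, \alpha(K_X^{-1})$; the goal is to show that
\[
\int_X e^{-\alpha \phi}|S|_h^{2\beta-2}\,\text{vol}_h
\]
is uniformly bounded as $\phi$ ranges over $\omega$-psh functions with $\sup \phi = 0$. The hypothesis $\alpha < \beta\,\alpha(K_X^{-1})$ is precisely the statement that the open interval
\[
\bigl(\,1/\beta,\; \alpha(K_X^{-1})/\alpha\,\bigr)
\]
is nonempty, so I would pick any $p$ in this interval and let $q = p/(p-1)$ be the H\"older conjugate. By construction $p\alpha < \alpha(K_X^{-1})$ and, from $p > 1/\beta$, one checks that $q < 1/(1-\beta)$.

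Applying H\"older with exponents $(p,q)$ gives
\[
\int_X e^{-\alpha\phi}|S|_h^{2\beta-2}\,\text{vol}_h
\leq \left(\int_X e^{-p\alpha\phi}\,\text{vol}_h\right)^{1/p}
\left(\int_X |S|_h^{q(2\beta-2)}\,\text{vol}_h\right)^{1/q}.
\]
The first factor is uniformly bounded in $\phi$ by the very definition of $\alpha(K_X^{-1})$ together with $p\alpha < \alpha(K_X^{-1})$. For the second factor, I would rewrite the exponent as $2\beta'-2$ with $\beta' := 1 - q(1-\beta)$; the inequality $q < 1/(1-\beta)$ guarantees $\beta' \in (0, \beta] \subset (0,1]$, so the standing hypothesis that $(X,(1-\beta')D)$ is KLT for every such $\beta'$ gives exactly $\int_X |S|_h^{2\beta'-2}\,\text{vol}_h < \infty$. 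Combining the two bounds shows $\alpha(K_X^{-1},(1-\beta)D) \geq \alpha$, and letting $\alpha \uparrow \beta\,\alpha(K_X^{-1})$ yields the claim.

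There is no serious obstacle: the only thing to verify carefully is the compatibility between the three constraints $p > 1$, $p\alpha < \alpha(K_X^{-1})$, and $q < 1/(1-\beta)$, and that the KLT assumption as stated in the proposition covers every intermediate angle $\beta'$ produced by the H\"older exponent. The positivity $\alpha(K_X^{-1}) > 0$, which is needed to make the conclusion non-vacuous, is the standard alpha-invariant positivity on a $\Q$-Fano variety and does not require a separate argument here.
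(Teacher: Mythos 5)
Your proposal is correct and follows essentially the same argument as the paper: both apply H\"older's inequality with exponents chosen so that the first factor is controlled by the ordinary alpha-invariant of $K_X^{-1}$ and the second by the KLT hypothesis for an intermediate angle. The only difference is cosmetic — the paper parametrizes the H\"older exponents by a small $\epsilon>0$ (taking $q=\tfrac{1}{1-\beta+\epsilon\beta}$, $p=\tfrac{1}{\beta-\epsilon\beta}$, $\alpha=\tilde\alpha(1-\epsilon)\beta$), while you pick $p$ directly from the nonempty interval $(1/\beta,\alpha(K_X^{-1})/\alpha)$; the constraints match exactly.
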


\begin{proof}
The last inequality is proved in \cite{BBEGZ}, Proposition 1.4. By the H\"older's inequality, for $\phi$ with $\sup \phi=0$, we have 
$$\int_{X} e^{-\alpha \phi}|S|_h^{2\beta-2}\text{vol}_h
\leq \left(\int_X e^{-\alpha p\phi} \text{vol}_h\right)^\frac{1}{p}\left(\int_X |S|_h^{(2\beta-2)q}\text{vol}_h\right)^\frac{1}{q},$$
where we take  $q=\frac{1}{1-\beta+\epsilon \beta}$, and $p=\frac{1}{\beta-\epsilon\beta}$ for $\epsilon>0$ small.  The quantity 
$$\int_X |S|_h^{(2\beta-2)q}\text{vol}_h$$
is bounded by the KLT condition and, if we take $\alpha=\tilde\alpha (1-\epsilon)\beta$ for any $\tilde\alpha<\alpha(K_X^{-1})$, also the term 
$$ \int_X e^{-\alpha p\phi}\text{vol}_h=\int_X e^{-\tilde\alpha\phi}\vol_h$$
is uniformly bounded above by $C=C({\tilde\alpha})$, by the definition of the ordinary alpha-invariant. 

Therefore the estimate in the proposition follows, since $\alpha(K_X^{-1}, (1-\beta)D)\geq \tilde\alpha (1-\epsilon)\beta$ for any $\tilde\alpha<\alpha(K_{X}^{-1})$ and any small $\epsilon>0$.
\end{proof}

By extending Tian's result relating alpha-invariant and the existence of K\"ahler-Einstein metrics, it has been proved in \cite{BBEGZ} that
\begin{prop}[\cite{BBEGZ}] \label{central fiber solvable}
Suppose that $$\alpha(K_X^{-1}, (1-\beta^\ast) D)> r(\beta^\ast) \frac{n}{n+1},$$ for some $\beta^\ast> 1-\lambda^{-1}$, where $r(x)=1-(1-x)\lambda$. Then, for any $\beta\in (0, \beta^\ast]$, there is a weak conical K\"ahler-Einstein metric on $(X, (1-\beta)D)$. 
\end{prop}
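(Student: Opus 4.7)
My plan consists of three steps: propagate the hypothesis from $\beta^*$ to every $\beta\in(0,\beta^*]$, derive coercivity of the log-Ding functional at each such $\beta$, and invoke the variational existence theorem in the pluripotential framework of \cite{BBEGZ} to produce the weak conical KE metric. The proposition is a log-singular version of Tian's classical $\alpha$-invariant criterion, and I expect the strategy to mirror that of the smooth Fano case, with the key new input being the uniform control of the $\alpha$-invariant across $\beta$.

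\textbf{Step 1: Propagating the $\alpha$-invariant bound.} I would first show that $\alpha(K_X^{-1},(1-\beta)D) > r(\beta)\frac{n}{n+1}$ for every $\beta\in(0,\beta^*]$, and not just at $\beta^*$. For $\beta\leq 1-\lambda^{-1}$ one has $r(\beta)\leq 0$ and Proposition \ref{prop2-4} makes the bound automatic. For $\beta\in(1-\lambda^{-1},\beta^*]$ the idea is a H\"older-type splitting of $|S|_h^{2\beta-2}$ paired against $|S|_h^{2\beta^*-2}$, exactly in the style of the proof of Proposition \ref{prop2-4}. A short computation with conjugate exponents $p>\beta^*/\beta$ yields
\[
\alpha(K_X^{-1},(1-\beta)D) \;\geq\; \tfrac{\beta}{\beta^*}\,\alpha(K_X^{-1},(1-\beta^*)D),
\]
and the elementary inequality $\tfrac{\beta}{\beta^*}\,r(\beta^*)\geq r(\beta)$, which is just $\lambda>1$, closes the propagation.

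\textbf{Step 2: Coercivity of the log-Ding functional.} Fix $\beta\in(0,\beta^*]$ with $r(\beta)>0$ and choose $\alpha$ strictly between $r(\beta)\tfrac{n}{n+1}$ and $\alpha(K_X^{-1},(1-\beta)D)$. For $\phi\in PSH(X,\omega)\cap L^\infty$ normalized by $\sup\phi=0$, the definition of the $\alpha$-invariant gives $\int_X e^{-\alpha\phi}|S|_h^{2\beta-2}\vol_h \leq C$, which directly bounds $F^1_\omega(\phi)$ from above in terms of $\sup\phi$. Combining with the standard Tian-type estimate relating $\sup\phi$, $\int_X \phi\,\omega^n/n!$ and $J_\omega(\phi)$, together with the identity $F^0_\omega(\phi)=J_\omega(\phi)-\int_X \phi\,\omega^n/n!$, yields a properness estimate
\[
F_{\omega,(1-\beta)D}(\phi) \;\geq\; \varepsilon\, J_\omega(\phi) - C
\]
for some $\varepsilon,C>0$ depending on $\beta$. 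The regime $r(\beta)\leq 0$ is strictly easier since $F^1$ is then automatically bounded below from positivity of the $\alpha$-invariant.

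\textbf{Step 3: Variational existence.} Finally I would invoke the general existence theorem of \cite{BBEGZ}: on a KLT pair, a proper lower semi-continuous log-Ding functional on the finite-energy class $\mathcal{E}^1(X,\omega)$ attains its minimum at some $\phi_\beta$, which is automatically a weak solution of the Monge--Amp\`ere equation \eqref{eqn2-2} with locally bounded potential---hence the desired weak conical KE metric on $(X,(1-\beta)D)$. The hard part is not any single step in isolation but verifying that the pluripotential machinery---developed originally in the smooth ambient setting---carries over cleanly to the $\mathbb{Q}$-Fano variety $X$; this is handled throughout by working with the projective embedding of $X$ and using $\omega_{FS}|_X$ as the smooth reference K\"ahler metric.
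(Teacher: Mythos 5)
The paper does not prove this proposition at all: it is imported wholesale from \cite{BBEGZ} (the authors explicitly note it ``is just the condition in Proposition 4.13 in \cite{BBEGZ} in our conventions''). So you are not deviating from the paper's argument so much as opening a black box the paper leaves sealed. That is legitimate, but you should be clear about which parts of your sketch are genuinely new reductions versus which parts are just re-citing the same black box.

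Your Step~1 is a real, correct reduction and worth isolating. A H\"older split of $e^{-\alpha\phi}|S|_h^{2\beta-2}$ with the weight written as $\left(e^{-\alpha p\phi}|S|_h^{2\beta^*-2}\right)^{1/p}\cdot |S|_h^{(2\beta-2)-(2\beta^*-2)/p}$ requires $\bigl((2\beta-2)-(2\beta^*-2)/p\bigr)\cdot \frac{p}{p-1}>-2$, which simplifies exactly to $p>\beta^*/\beta$, giving $\alpha(K_X^{-1},(1-\beta)D)\geq \frac{\beta}{\beta^*}\,\alpha(K_X^{-1},(1-\beta^*)D)$ in the limit $p\downarrow \beta^*/\beta$; and the identity $\frac{\beta}{\beta^*}r(\beta^*)-r(\beta)=\frac{(\beta^*-\beta)(\lambda-1)}{\beta^*}\geq 0$ closes it. This is the precise mechanism that delivers the ``for all $\beta\leq\beta^*$'' part of the statement, and the paper never spells it out.

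Steps~2 and~3, however, are not really a proof: they restate the \emph{conclusion} of \cite{BBEGZ}'s Proposition~4.13 (coercivity of the functional and attainment of the minimum in a finite-energy class) as if it followed in a few lines from the $\alpha$-invariant bound. In particular, the sentence ``the definition of the $\alpha$-invariant gives $\int_X e^{-\alpha\phi}|S|_h^{2\beta-2}\mathrm{vol}_h\leq C$, which directly bounds $F^1_\omega(\phi)$ from above in terms of $\sup\phi$'' is off in two ways: with $\sup\phi=0$ and $\alpha>r(\beta)$, a H\"older/Jensen step gives only a constant \emph{lower} bound on $F^1$, not an upper bound, and a constant lower bound on $F^1$ plus $F^0\geq 0$ gives boundedness below of $F$, not properness. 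The actual coercivity estimate $F\geq \varepsilon J-C$ requires the full strength of $\alpha>r(\beta)\frac{n}{n+1}$ (not merely $\alpha>r(\beta)$) and the comparison inequalities between $I$, $J$, $\sup\phi$; that is precisely the hard content of the cited result, and on a $\Q$-Fano variety it also requires the pluripotential-theoretic setup of \cite{BBEGZ,EGZ} that you gesture at in Step~3. So your sketch is correct in outline but is not self-contained in Steps~2--3 any more than the paper is; if you want to go beyond the paper's treatment, Step~2 is the place where real work would have to be inserted.
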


Note that in \cite{BBEGZ}, the alpha-invariant of the pair has a different normalization from the above one. The condition in the previous Proposition \ref{central fiber solvable} is just the condition in Proposition 4.13 in \cite{BBEGZ} in our conventions. 

Consider a $\Q$-Gorenstein smoothing $\mathcal{X}\rightarrow \Delta$ of a $\Q$-Fano variety $X_0$, equipped with a family of nice relative $\lambda$-plurianticanonical divisors $D_t$  as before. Since $r(\beta)=1-(1-\beta)\lambda$ tends to zero as $\beta$ tends to $1-\lambda^{-1}$ and $\lambda>1$,  we obtain as an immediate corollary of the above two propositions the following statement on the central fiber $X_0$:

\begin{cor}\label{cor2-7}
There exists some $\beta'>1-\lambda^{-1}$ so that for all $\beta\in (0, \beta']$, there exists a weak conical K\"ahler-Einstein metric on $(X_0, (1-\beta)D_0)$. 
\end{cor}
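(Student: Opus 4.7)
The strategy is to combine the alpha-invariant lower bound of Proposition \ref{prop2-4} with the existence criterion of Proposition \ref{central fiber solvable}, applied on the central fiber $X_0$ with divisor $D_0$. Since the relative divisor $\mathcal{D}$ is ``nice,'' the pair $(X_0,(1-\beta)D_0)$ is KLT for every $\beta \in (0,1)$, so both propositions apply verbatim with $X = X_0$ and $D = D_0$.

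First I would observe that $\alpha(K_{X_0}^{-1}) > 0$, which is the $\mathbb{Q}$-Fano version of Tian's classical positivity of the alpha-invariant established in \cite{BBEGZ}. Plugging this into Proposition \ref{prop2-4} gives
\begin{equation*}
\alpha(K_{X_0}^{-1},(1-\beta)D_0) \;\geq\; \beta\,\alpha(K_{X_0}^{-1}) \;=:\; c\,\beta,
\end{equation*}
with $c > 0$ independent of $\beta$. The sufficient condition of Proposition \ref{central fiber solvable} is therefore implied by the purely numerical inequality
\begin{equation*}
c\,\beta^\ast \;>\; \frac{n}{n+1}\,r(\beta^\ast) \;=\; \frac{n}{n+1}\bigl(1-(1-\beta^\ast)\lambda\bigr).
\end{equation*}

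The next step is to verify that this inequality is achievable for some $\beta^\ast$ strictly larger than $1-\lambda^{-1}$. Evaluating at $\beta^\ast = 1-\lambda^{-1}$, the right-hand side is zero while the left-hand side equals $c(1-\lambda^{-1}) > 0$ since $\lambda > 1$. Both sides are continuous in $\beta^\ast$, so the strict inequality persists on a small open interval to the right of $1-\lambda^{-1}$. Pick any $\beta^\ast$ in this interval. Proposition \ref{central fiber solvable} then produces a weak conical K\"ahler-Einstein metric on $(X_0,(1-\beta)D_0)$ for every $\beta \in (0,\beta^\ast]$. Setting $\beta' := \beta^\ast$ yields the corollary.

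There is no real obstacle here: once the preceding two propositions are in hand, the corollary is essentially a continuity argument exploiting the fact that $r(\beta)$ vanishes at the log-Calabi-Yau threshold $\beta = 1-\lambda^{-1}$ while the alpha-invariant estimate is linear in $\beta$ with strictly positive slope. The only point demanding care is ensuring that the KLT hypothesis of both propositions is in force at the singular central fiber, which is exactly what the ``nice'' property of $\mathcal{D}$, proved in the appendix, guarantees.
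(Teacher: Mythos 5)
Your proposal is correct and follows essentially the same argument as the paper: the paper's proof is the one-line observation that $r(\beta)\to 0$ as $\beta\to 1-\lambda^{-1}$ while $\lambda>1$, and then cites Propositions \ref{prop2-4} and \ref{central fiber solvable} as an ``immediate corollary.'' You have simply unpacked that observation into the explicit continuity argument, which is exactly what the authors intend.
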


\begin{rmk} \label{rmk2-7}
As is shown in \cite{BBEGZ}, Theorem 5.4, the above corollary implies that the identity component $Aut_0(X_0, D_0)$ of the automorphism group consists of just the identity. This fact will be used later. 
\end{rmk}

On the other hand, on the smooth fibers $X_t$, $t\neq 0$,  we have the following uniform estimates by an observation of Odaka: 

\begin{prop}[\cite{Odaka}]
There exits $\beta''>1-\lambda^{-1}$, so that $(X_t, (1-\beta)D_t)$ is K-polystable for all $t\in\Delta^*$ and $\beta\in (0, \beta'']$.
\end{prop}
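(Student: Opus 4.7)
The plan is to split the range $(0,\beta'']$ according to the sign of $r(\beta)=1-(1-\beta)\lambda$, using the critical log Calabi--Yau value $\beta_\ast:=1-\lambda^{-1}$ at which $r$ vanishes. In each of the three regimes $r(\beta)<0$, $r(\beta)=0$, $r(\beta)>0$ a different aspect of Odaka's work on K-stability enters.

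For $\beta\in(0,\beta_\ast]$ one has $r(\beta)\le 0$ and $K_{X_t}+(1-\beta)D_t\sim r(\beta)K_{X_t}$ is either numerically trivial (log Calabi--Yau, when $\beta=\beta_\ast$) or ample (log general type, since $-K_{X_t}$ is ample, when $\beta<\beta_\ast$). Because $D_t$ is smooth with coefficient $1-\beta<1$, the pair is automatically KLT. In both regimes, Odaka's results \cite{Odaka} on K-stability of KLT log pairs with numerically trivial or ample log canonical class apply directly and yield K-polystability with no further input, so this half of the range needs no uniformity analysis in $t$.

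For the log Fano range $\beta\in(\beta_\ast,\beta'']$, I would apply an algebraic alpha-invariant criterion of the same shape as Proposition \ref{central fiber solvable}: if
$$\alpha(K_{X_t}^{-1},(1-\beta)D_t)>\frac{n}{n+1}\,r(\beta),$$
then $(X_t,(1-\beta)D_t)$ is K-polystable. For smooth fibres this can be obtained either analytically (combining Proposition \ref{central fiber solvable} with Berman's result that conical KE implies K-polystability) or algebraically in the spirit of Odaka--Sano. By Proposition \ref{prop2-4}, the left-hand side is bounded below by $\beta\cdot\alpha(K_{X_t}^{-1})$, so once a uniform positive lower bound $\alpha(K_{X_t}^{-1})\ge\alpha_0>0$ is available on a neighborhood of $0\in\Delta$ one may choose $\beta''>\beta_\ast$ small enough that $\beta\alpha_0>\frac{n}{n+1}r(\beta)$ throughout $(\beta_\ast,\beta'']$, simply using that $r(\beta)\to 0^+$ as $\beta\to\beta_\ast^+$.

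The main obstacle is therefore the uniform lower bound on the ordinary alpha-invariants of the nearby smooth fibres, which must not degenerate as $t\to 0$. This is supplied by the Demailly--Koll\'ar lower semi-continuity of log canonical thresholds under flat specialization (equivalently, of the alpha-invariant), applied to the polarized family $\X\subset\P^N\times\C$. Since $X_0$ is $\Q$-Fano with KLT singularities, $\alpha(K_{X_0}^{-1})>0$ by Proposition 1.4 of \cite{BBEGZ}, and semi-continuity then propagates this to a uniform $\alpha_0>0$ on a small disc around $0$. After possibly shrinking $\Delta$ this closes the argument and yields the asserted $\beta''>1-\lambda^{-1}$.
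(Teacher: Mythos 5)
Your proof is correct in spirit but departs from the paper's route at the key technical step, which is obtaining a \emph{uniform} positive lower bound on $\alpha(K_{X_t}^{-1})$ for $t$ near $0$. The paper gets this from Musta\c{t}\u{a}'s elementary bound $\mathrm{lct}(X,k^{-1}E)\geq k\cdot\mathrm{mult}(E)^{-1}$ combined with the observation that, since all $X_t$ are embedded by $|-\lambda K_{X_t}|$ into the same $\P^N$, any $E\in|k(-\lambda K_{X_t})|$ has $\mathrm{mult}(E)\leq Ck$ with $C$ independent of $t$ and $k$; this gives a uniform lower bound on $\mathrm{glct}(X_t, K_{X_t}^{-\lambda})$ in one stroke, and the scaling relation $\mathrm{glct}(X_t,K_{X_t}^{-1})=\lambda\,\mathrm{glct}(X_t,K_{X_t}^{-\lambda})$ finishes it. You instead invoke ``Demailly--Koll\'ar lower semi-continuity of the alpha-invariant under flat specialization.'' This is the right kind of statement, but note that what Demailly--Koll\'ar actually establish is lower semi-continuity of the singularity exponent for a family of psh functions on a \emph{fixed} space; upgrading this to lower semi-continuity of the \emph{global} alpha-invariant of a flat family of varieties degenerating to a \emph{singular} central fiber requires an additional compactness/normalization argument for the infimum over psh potentials which you do not spell out, and which was not off-the-shelf at the time the paper was written. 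Musta\c{t}\u{a}'s bound sidesteps exactly this delicacy, which is presumably why the paper chose it.

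Your explicit case split at $\beta_\ast=1-\lambda^{-1}$ into log general type / log Calabi--Yau / log Fano regimes, with Odaka's stability results handling the first two, is a clean alternative but is unnecessary: Proposition~\ref{central fiber solvable} (quoted from BBEGZ) already produces a weak conical KE metric on $(X_t,(1-\beta)D_t)$ for \emph{all} $\beta\in(0,\beta'']$ from the alpha-invariant hypothesis at a single angle $\beta''$, and Berman's theorem then gives K-polystability across the whole range at once. Also note that in the paper \cite{Odaka} refers to Odaka's moduli paper, whereas the K-stability results for (log) general type and Calabi--Yau pairs you want are in \cite{Odaka1} and \cite{Odaka2}.
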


We briefly describe a proof here using the relation between the analytically defined alpha-invariant with the algebraically defined \emph{global log canonical threshold} \cite{CSD},

$$\alpha(X_t)=\text{glct}(X_t, K_{X_t}^{-1})$$
where for a polarized manifold $(X, L)$, 
\begin{eqnarray*}
\text{glct}(X, L)&=&\inf_{k>0}\inf_{E\in |kL|} \text{lct}(X, k^{-1}E)\\
&:=&\inf_{k>0}\inf_{E\in |kL|)}\{\sup_{c>0}|(X, ck^{-1}E)\ \text{is log canonical}\}.
\end{eqnarray*}

By \cite{Mus},  $\text{lct}(X, k^{-1}E)\geq k(mult(E))^{-1}$. We can apply this to $X_t$: since $X_t$ is embedded into $\P^N$ by $|-\lambda K_{X_t}|$, it follows that for any $E\in |k(-\lambda K_{X_t})|$, $mult(E)\leq Ck$, for constant $C>0$ independent of $t$ and $k$. Therefore $\text{glct}(X_t, K_{X_t}^{-\lambda})$ has a uniform positive lower bound. On the other hand, by the scaling relation,
 $\text{glct}(X_t, K_{X_t}^{-1})=\lambda  \text{ glct}(X_t, K_{X_t}^{-\lambda})$, 
is also bounded from below since the power $\lambda$ is uniform for all $X_t$ in $\mathcal{X}$. 

We then apply Proposition \ref{prop2-4} and \ref{central fiber solvable}, there exists $\beta''>1-\lambda^{-1}$ such that there exists \emph{weak conical K\"ahler-Einstein metrics} on $(X_t, (1-\beta)D_t)$ for $t\in \Delta^*, \beta\in (0,\beta'']$. According to \cite{Ber2}, $(X_t,(1-\beta)D_t)$ is K-polystable for $t\in \Delta^*, \beta\in (0,\beta'']$.

By \cite{CDS1, CDS2, CDS3}, there exists a unique \emph{genuine conical K\"ahler-Einstein metric} $\omega_{t,\beta}$ on $(X_t, (1-\beta)D_t)$ for $\beta\in (0, \beta'']$ and $t\in \Delta^*$. Actually these \emph{conical K\"ahler-Einstein metrics} concide with the above \emph{weak conical K\"ahler-Einstein metrics} obtained by \cite{BBEGZ} by the regularity result \cite{GP, CW} and the ``uniqueness'' from \cite{SW} and \cite{BBEGZ}. In summary, take $\underline{\beta}=\min(\beta', \beta'')>1-\lambda^{-1}$, we have 

\begin{cor} \label{cor3-9}
For any $\beta\in (0, \underline{\beta}]$ and $t\in \Delta$, there is a unique weak conical K\"ahler-Einstein metric on $(X_t, (1-\beta)D_t)$, which is  of positive scalar curvature for $\beta\in (1-\lambda^{-1},\underline{\beta}]$ and genuinely conical when $t\neq 0$.  
\end{cor}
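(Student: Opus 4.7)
The plan is to take $\underline{\beta}:=\min(\beta',\beta'')$, with $\beta'$ the threshold from Corollary \ref{cor2-7} and $\beta''$ the threshold from Proposition 2.8, and to verify each claim (existence, uniqueness, positivity of scalar curvature, genuinely conical nature off the central fiber) by assembling the pieces already collected in this section. The three assertions should be handled separately for $t=0$ and $t\in\Delta^{*}$.

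For the central fiber, existence of a weak conical K\"ahler--Einstein metric on $(X_0,(1-\beta)D_0)$ for $\beta\in (0,\beta']$ is provided directly by Corollary \ref{cor2-7}, which combines the alpha-invariant lower bound from Proposition \ref{prop2-4} with the existence criterion of \cite{BBEGZ} (Proposition \ref{central fiber solvable}). For $t\in\Delta^{*}$, Proposition 2.8 yields K-polystability of $(X_t,(1-\beta)D_t)$ for all $\beta\in (0,\beta'']$, and then the main theorems of \cite{CDS1,CDS2,CDS3} produce a genuinely conical K\"ahler--Einstein metric $\omega_{t,\beta}$. Thus for every $\beta\in (0,\underline\beta]$ and every $t\in\Delta$ at least one such (weak) metric exists.

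For uniqueness, I would argue as follows. On smooth pairs, Proposition 2.8 together with \cite{BBEGZ} also produces a weak conical K\"ahler--Einstein metric; to see that it coincides with the genuinely conical metric from \cite{CDS1,CDS2,CDS3}, the regularity results of \cite{GP, CW} upgrade any weak conical solution on $(X_t,(1-\beta)D_t)$, $t\neq 0$, to a genuine conical one, and the generalized Bando--Mabuchi uniqueness statements of \cite{SW} and \cite{BBEGZ} then force the two to agree modulo $\mathrm{Aut}_0$. On the central fiber, uniqueness (up to $\mathrm{Aut}_0(X_0,D_0)$) is again supplied by \cite{BBEGZ}; by Remark \ref{rmk2-7} the identity component $\mathrm{Aut}_0(X_0,D_0)$ is trivial, so the weak conical K\"ahler--Einstein metric on $(X_0,(1-\beta)D_0)$ is literally unique. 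The positivity of the scalar curvature is automatic: the equation $\mathrm{Ric}\,\omega_{\phi}=r(\beta)\omega_{\phi}+2\pi(1-\beta)[D]$ with $r(\beta)=1-(1-\beta)\lambda$ has positive coefficient exactly when $\beta>1-\lambda^{-1}$, which is the range in question.

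In short, the corollary is really a bookkeeping statement combining Corollary \ref{cor2-7}, Proposition 2.8, the CDS existence theorem, and the BBEGZ / SW uniqueness and regularity theory. The only non-formal point, and the one I would treat most carefully, is matching the two a priori different notions of weak conical vs.\ genuinely conical K\"ahler--Einstein metric on the smooth pairs $(X_t,(1-\beta)D_t)$; this is exactly where the regularity results of \cite{GP,CW} and the uniqueness of \cite{SW, BBEGZ} have to be invoked in sequence, and is the step where I would expect a reader to want an explicit cross-reference.
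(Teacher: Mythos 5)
Your argument reproduces the paper's proof almost verbatim: set $\underline\beta=\min(\beta',\beta'')$, take existence on $X_0$ from Corollary~\ref{cor2-7}, obtain weak (and, via \cite{CDS1,CDS2,CDS3}, genuinely) conical K\"ahler--Einstein metrics on the smooth fibers from the uniform alpha-invariant estimate and Berman's theorem, and identify the two via \cite{GP,CW} regularity plus \cite{SW,BBEGZ} uniqueness, with triviality of $\mathrm{Aut}_0(X_0,D_0)$ from Remark~\ref{rmk2-7} giving literal uniqueness on the central fiber. This is the same approach as the paper and it is correct.
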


From this, to prove theorem \ref{thm1-3}, it suffices to establish the uniform $L^\infty$ estimate of the conical K\"ahler-Einstein potentials.  We remark that by standard theory this follows from a ``uniform properness'' of the log-Mabuchi-functional as $t$ varies, and the latter would be straightforward if we can derive a uniform upper bound (independent of $t$) of the integral in the definition of alpha-invariant.  This is certain type of semi-continuity property which seems to be correct in the end, but we do not know a proof of that, so we need to get around with this by an \emph{ad hoc} method. 

\subsection{Uniform bounds on energy functionals}
Suppose  we have a family $(\X, \D)$ over $\Delta$, which is embedded into $\P^N\times \C$ by $K_{\X/\Delta}^{-\lambda}$. We denote by $\omega_{t, FS}$ the restriction of $\lambda^{-1}\omega_{FS}$ on $X_t$, and by $h_t$ the restriction of $h_{FS}^{1/\lambda}$ on $K_{\X/\Delta}|_{X_t}$,  $\beta\in (0, 1)$, we define $F_{t,\beta}$ (respectively $M_{t,\beta}$) to be the infimum of the log-Ding-functional  (respectively log-Mabuchi-functional) on $X_t$ with base point $\omega_t=\omega_{t, FS}$. These are finite when $X_t$ admits a weak conical K\"ahler-Einstein metric according to Theorem 4.8 of \cite{BBEGZ}, and $F_{t,\beta}=F_{\omega_t, (1-\beta)D_t}(\phi_{t,\beta})$ is achieved at the unique conical KE metric $\omega_{t,\beta}=\omega_{t}+\sqrt{-1}\partial\bar\partial \phi_{t,\beta}$.

\begin{prop} \label {prop2-10}
Suppose for a fixed $\beta\in (1-\lambda^{-1}, 1)$, or $\beta=1$ if $Aut(X_0)$ is discrete,  there are weak conical K\"ahler-Einstein metrics $\omega_{t,\beta}$ on $(X_t, (1-\beta)D_t)$ for all $t\in \Delta$, which are genuinely conical for $t\neq 0$. Then we have that $\limsup_{t\rightarrow0} F_{t,\beta}>-\infty$ and $\limsup_{t\rightarrow 0}M_{t,\beta}>-\infty$. 
\end{prop}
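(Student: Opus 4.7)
My plan is to reduce the two claims to a single estimate for the log-Ding infimum, and then derive that estimate via pluripotential compactness together with uniqueness of the weak KE on the central fiber.

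\textbf{Reduction of $M$ to $F$.} For $t\neq 0$ the pair $(X_t,(1-\beta)D_t)$ is smooth and $\omega_{t,\beta}$ is the genuine conical KE metric, so the Tian--Berman relation in Proposition~\ref{logDM} is an equality at $\phi_{t,\beta}$, yielding
\[
M_{t,\beta}=r(\beta)\,F_{t,\beta}+\int_{X_t}H_{\omega_t,(1-\beta)D_t}\,\frac{\omega_t^n}{n!}.
\]
Because the embedded family $\mathcal{X}\subset\mathbb{P}^N\times\Delta$ varies smoothly and the nice divisor $\mathcal{D}$ provides a uniform KLT condition, the right-hand integral is uniformly bounded as $t\to 0$. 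Since $r(\beta)>0$, this reduces the proposition to $\limsup_{t\to 0}F_{t,\beta}>-\infty$.

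\textbf{$F$-bound via compactness and uniqueness.} Normalize $\phi_{t,\beta}$ by $\sup_{X_t}\phi_{t,\beta}=0$; then $\phi_{t,\beta}\le 0$ and $F^0_{\omega_t}(\phi_{t,\beta})\ge 0$, so it suffices to bound
\[
I_t:=\int_{X_t}e^{-r(\beta)\phi_{t,\beta}}|S_t|_{h_t}^{2\beta-2}\,\vol_{h_t}
\]
from above along some sequence $t_n\to 0$. Using the uniform $L^1$-compactness of $\omega_{t,FS}$-psh functions in the ambient projective family (standard Hartogs-type statements), one extracts a subsequence with $\phi_{t_n,\beta}\to\phi_\infty$ in $L^1$ for some $\omega_0$-psh $\phi_\infty\not\equiv-\infty$. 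Passing to the limit in the Monge--Amp\`ere equation \eqref{eqn2-2} and using the niceness of $\mathcal{D}$ to control the measure on the right, $\phi_\infty$ is identified as a weak conical KE metric on $(X_0,(1-\beta)D_0)$. By the generalized Bando--Mabuchi uniqueness \cite{BBEGZ} and the triviality of $\mathrm{Aut}_0(X_0,D_0)$ (Remark~\ref{rmk2-7}), $\phi_\infty=\phi_{0,\beta}$ after matching the normalization. Consequently $I_{t_n}\to I_0<\infty$, so $F^1_{\omega_{t_n}}(\phi_{t_n,\beta})$ is bounded below, whence $F_{t_n,\beta}\ge -C$ and the proposition follows.

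\textbf{Main obstacle.} The delicate point is the last convergence: passing the exponential weight $e^{-r(\beta)\phi_{t_n,\beta}}|S_{t_n}|^{2\beta-2}$ to the limit. The $F^0$-term behaves well under pluripotential $L^1$-convergence, but $F^1$ is sensitive to the tail behaviour of $\phi_{t_n,\beta}$ near $X_0^{\mathrm{sing}}\cup D_0$. One must combine the Skoda-type integrability guaranteed by the KLT property of the nice divisor $\mathcal{D}$ with the $\sup=0$ normalization to secure the required uniform integrability of the integrand and rule out mass escaping along the degeneration. A semicontinuity property for the alpha-invariant under $\mathbb{Q}$-Gorenstein smoothings would give a clean proof; since such a statement is unavailable, the ``ad hoc'' replacement consists precisely in bypassing that semicontinuity by working with the single, well-chosen KE potential $\phi_{t,\beta}$ and exploiting its uniqueness on the central fiber.
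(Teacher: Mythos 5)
Your proposal is a genuinely different route from the paper's, and it contains two essential gaps that the paper's argument was specifically designed to avoid.

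\textbf{Gap in the $F$-bound.} The critical step ``Consequently $I_{t_n}\to I_0<\infty$'' does not follow from what precedes it. With the normalization $\sup\phi_{t_n,\beta}=0$, the integrand $e^{-r(\beta)\phi_{t_n,\beta}}|S_{t_n}|^{2\beta-2}$ is bounded \emph{below} by a fixed positive quantity, so Fatou along an a.e.\ convergent subsequence only yields $\liminf I_{t_n}\ge I_0$; the direction you actually need, $\limsup I_{t_n}<\infty$, requires uniform integrability of the exponential weight across the degenerating family. That is precisely the $t$-uniform upper bound on the alpha-invariant-type integral that the paper singles out as unavailable (``we do not know a proof of that, so we need to get around with this by an \emph{ad hoc} method''). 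Your closing paragraph correctly identifies this as the delicate point, but the stated fix (``working with the single, well-chosen KE potential and exploiting its uniqueness on the central fiber'') does not supply the missing estimate: uniqueness of the limit says nothing about whether mass can escape near $X_0^{\mathrm{sing}}\cup D_0$ along the sequence. A further (smaller) issue is that identifying the $L^1$-limit $\phi_\infty$ as a weak conical KE requires convergence of the Monge--Amp\`ere measures, which is not automatic under $L^1$-convergence alone; some $L^\infty$ or monotonicity control is needed before one can pass to the limit in Equation~(\ref{eqn2-2}).

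\textbf{Gap in the $M$-reduction.} The inequality of Proposition~\ref{logDM} does reduce the $M$-bound to the $F$-bound together with a lower bound on $\int_{X_t} H_{\omega_t,(1-\beta)D_t}\,\omega_t^n/n!$. However, the assertion that this integral is ``uniformly bounded'' because the embedding varies smoothly and $\mathcal{D}$ is nice is exactly the content of Proposition~\ref{prop2-20}, and it is not trivial: the Ricci potential $h_{\omega_t}$ degenerates near $X_0^{\mathrm{sing}}$ as $t\to 0$, and the paper controls this via a log resolution $\mu\colon\tilde{\mathcal X}\to\mathcal X$ and a careful case-by-case analysis of $\mu^*\vol_{h_t}$ near the exceptional divisors. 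That analysis cannot be waved away.

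\textbf{Contrast with the paper's method.} Instead of attempting direct uniform integrability, the paper solves a homogeneous complex Monge--Amp\`ere Dirichlet problem on the total space $\mathcal{X}_r$ with boundary data given by (approximations of) the conical KE potentials on $\partial\Delta_r$, and then proves that $t\mapsto F_{\omega_t,(1-\beta)D_t}(\Phi^\epsilon_t)$ is a continuous \emph{subharmonic} function of $t$ on $\Delta_r$ (Propositions~\ref{prop2-17}, \ref{prop2-18}, \ref{prop2-19}), using positivity of direct images and Deligne pairings. The maximum principle then yields $\sup_{\partial\Delta_r}F_{t,\beta}\ge F_{0,\beta}$, and letting $r\to 0$ gives the $\limsup$ bound. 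This sidesteps the alpha-invariant semicontinuity entirely. If you want to salvage a compactness approach along your lines, you would need to first establish a uniform-in-$t$ integrability statement for $e^{-r(\beta)\phi}|S_t|^{2\beta-2}\vol_{h_t}$ over $\sup$-normalized $\omega_t$-psh $\phi$, which is precisely the open problem the paper is circumventing.
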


\subsubsection{Lower bound of log-Ding-functional}
We first prove the statement about the log-Ding-functional. The idea  comes from \cite{LS}. For $r\in (0, 1)$, we denote by $\mathcal{X}_r=\mathcal{X}|_{\Delta_r}\subset \mathbb{P}^N\times \Delta_r$. $\mathcal{X}_r$ is viewed as a complex analytic variety with smooth boundary, endowed with a natural K\"ahler metric $\Omega=\lambda^{-1}\omega_{FS}+\sqrt{-1}\mathrm{d}t\wedge\mathrm{d}\bar t$.  Let $\omega_{t,\beta}=\lambda^{-1}\omega_{FS}+\sqrt{-1}\partial\bar\partial \phi_{t, \beta}$ be the (weak) conical K\"ahler-Einstein metric on $X_t$ for $\beta\in (1-\lambda^{-1}, 1)$ as in the Proposition (the case for $\beta=1$ and discrete $Aut(X_0)$ is simpler). By \cite{BBEGZ} and \cite{SW} such metric is unique, and we use the natural normalization of $\phi_{t,\beta}$ determined by the Monge-Amp$\grave{\text{e}}$re equation 
$$(\omega_t+\sqrt{-1}\partial\bar\partial\phi_{t,\beta})^n=e^{-r(\beta)\phi_{t,\beta}}|S_t|_{h_t}^{2\beta-2}{\text{vol}_{h_t}}$$ 
and then define a function $\Psi(t,\cdot)=\phi_{t,\beta}(\cdot)$ on $\mathcal{X}_r$. Now fix $r\in(0, 1)$, we want to solve the Dirichlet problem for the following homogeneous complex Monge-Amp\`ere equation

 \begin{equation} \label{eqn2-3}  \left\{ \begin{array}{ll}
               (\Omega+\sqrt{-1}\partial\bar\partial \Phi)^{n+1}=0;\\
               \Omega+\sqrt{-1}\partial\bar\partial \Phi\geq 0\\
               \Phi|_{\partial \mathcal{X}_r}=\Psi.
               \end{array}\right.
               \end{equation}

Since $\Psi$ is not smooth, due to the fact that $\phi_{t,\beta}(\cdot)$ is the potential for conical KE metrics, we will instead solve the equation for some smooth approximations to the boundary conditions.  A geometrically natural approximation $\omega_{t,\beta}^\epsilon=\omega_t+\sqrt{-1}\partial\bar\partial \phi_{t,\beta}^\epsilon$ is the solution of the following Monge-Amp$\grave{\text{e}}$re equation
\begin{equation} \label{eqn2-4}
(\omega_t+\sqrt{-1}\partial\bar\partial \phi_{t,\beta}^\epsilon)^n=e^{-r(\beta)\phi_{t,\beta}^\epsilon}\frac{\vol_{h_t}}{(|S_t|_{h_t}^2+\epsilon)^{1-\beta}}.
\end{equation}
\noindent For $\epsilon\in (0,1]$ and $t\neq 0$, this equation can be solved by following a continuity path, see \cite{CDS1}. Let $\Psi^\epsilon(t, z)=\phi_{t,\beta}^\epsilon(z)$. As is shown in \cite{CDS1}, for any fixed $t\in \Delta^*$, as $\epsilon\to 0$ the potential $\Psi^\epsilon(t,\cdot)$ converges to $\phi_{t,\beta}$ globally in $C^\alpha$ sense on $X_t$ and  in $C^\infty$ sense on any compact set away from $D_t$. For our purpose we need a uniform convergence (as $\epsilon\to 0$) for $t$ with $|t|=r>0$ fixed. 

First recall that the oscillation of a K\"ahler potential is controlled by the $I$ functional together with the Poincar\'e and Sobolev constants bound, see \cite{Tian}. 

\begin{lem} \label{lem2-11}
Let $\omega$ and $\omega_\phi=\omega+\sqrt{-1}\partial\bar\partial\phi$ be two smooth K\"ahler metrics on a compact K\"ahler manifolds $X$. Then 
$$\text{osc } \phi\leq \{C_S(\omega)^{\delta_n}C_P(\omega)+C_S(\omega_\phi)^{\delta_n}C_P(\omega_\phi)\} I_\omega(\phi)+C_S(\omega)^{\delta_n}+C_S(\omega_\phi)^{\delta_n},$$
\noindent where $C_P, C_S$ are the Poincar$\acute{\text{e}}$ and Sobolev constants of the corresponding metrics on $X$ and $\delta_n$ is a dimensional constant. 
\end{lem}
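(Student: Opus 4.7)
The strategy is to decompose the oscillation by inserting both the $\omega^n$- and $\omega_\phi^n$-averages of $\phi$:
$$\text{osc}\,\phi=(\sup\phi-\bar\phi_\omega)+(\bar\phi_\omega-\bar\phi_{\omega_\phi})+(\bar\phi_{\omega_\phi}-\inf\phi).$$
The middle piece equals $V^{-1}I_\omega(\phi)$ directly from the definition of the $I$-functional, and the two outer pieces are mutually symmetric: $-\phi$ is $\omega_\phi$-plurisubharmonic (since $\omega_\phi+\sqrt{-1}\p\bp(-\phi)=\omega\geq 0$), and the Aubin functional enjoys the reflection identity $I_{\omega_\phi}(-\phi)=I_\omega(\phi)$. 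The whole problem therefore reduces to proving a single one-sided estimate of the form
$$\sup\phi-\bar\phi_\omega\leq C_S(\omega)^{\delta_n}+C_S(\omega)^{\delta_n}C_P(\omega)\,I_\omega(\phi),$$
which will be applied once with $(\omega,\phi)$ and once with $(\omega_\phi,-\phi)$ and then summed with the middle term.

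For this one-sided estimate the plan is a standard Moser iteration on the truncation $u=(\phi-\bar\phi_\omega)_+$. The $\omega$-plurisubharmonicity of $\phi$ yields $\Delta_\omega\phi\geq -n$; testing against $u^{p-1}$ and integrating by parts gives
$$\int_X\bigl|\nabla u^{p/2}\bigr|_\omega^2\,\omega^n\leq\frac{np^2}{4(p-1)}\int_X u^{p-1}\,\omega^n,$$
so the Sobolev inequality with constant $C_S(\omega)$ provides a reverse-H\"older step with exponent gain $\mu=n/(n-1)$. Iterating from $p=2$ and summing the convergent geometric series in $1/p_k$ yields the familiar $L^2$-to-$L^\infty$ bound
$$\sup u\leq C_n\,C_S(\omega)^{\delta_n}\bigl(\|u\|_{L^2(\omega)}+1\bigr),$$
with $\delta_n$ a purely dimensional exponent. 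The $L^2$-norm of $u$ is controlled via the Poincar\'e inequality for the mean-zero function $\phi-\bar\phi_\omega$: combining it with the well-known rewriting
$$I_\omega(\phi)=\frac{1}{n!}\int_X\sqrt{-1}\p\phi\wedge\bp\phi\wedge\sum_{i=0}^{n-1}\omega^i\wedge\omega_\phi^{n-1-i}\geq c_n\int_X|\nabla\phi|_\omega^2\,\omega^n$$
(obtained by integration by parts applied to $\int\phi(\omega^n-\omega_\phi^n)$ and dominating by the $\omega^{n-1}$ term) gives $\|u\|_{L^2(\omega)}^2\leq c_nC_P(\omega)I_\omega(\phi)$. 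The elementary inequality $\sqrt{xy}\leq x+y$ with $x=1$ and $y=C_P(\omega)I_\omega(\phi)$ then turns the square root into exactly the linear dependence required.

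Summing the two symmetric one-sided bounds together with the middle term $V^{-1}I_\omega(\phi)$ (which is absorbed into the coefficient of $I_\omega(\phi)$ using $C_S\geq 1$, $C_P\geq 1$) yields the inequality of the lemma. The step I expect to be most delicate is the Moser iteration on the truncation $u=(\phi-\bar\phi_\omega)_+$: one must first justify the integration by parts across the level set $\{\phi=\bar\phi_\omega\}$ (which works because $u$ is Lipschitz with distributional gradient vanishing on $\{\phi\leq\bar\phi_\omega\}$), and then bookkeep the $p$-dependent constants arising at each iteration step so that they collapse into a single factor $C_S(\omega)^{\delta_n}$ with $\delta_n$ depending only on the dimension. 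Everything else is routine algebra together with the Poincar\'e and AM-GM inputs described above.
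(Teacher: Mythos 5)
Your proposal follows essentially the same route as the paper's own outline: decompose the oscillation across the two averages $\bar\phi_\omega$, $\bar\phi_{\omega_\phi}$, run Moser iteration on $\Delta_\omega\phi\geq -n$ (and on its reflection $\Delta_{\omega_\phi}(-\phi)\geq -n$) using the Sobolev constants, and control the resulting $L^2$-norms via Poincar\'e together with the integration-by-parts identity bounding $\|\nabla\phi\|^2_{L^2(\omega)}$ by $I_\omega(\phi)$. The extra observations you spell out (the symmetry $I_{\omega_\phi}(-\phi)=I_\omega(\phi)$, the AM--GM linearisation of the square root, and the absorption of the middle term $V^{-1}I_\omega(\phi)$) are bookkeeping steps implicit in the paper's sketch.
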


\noindent \emph{Outline of Proof}: The application of Moser's iteration to the inequality $\Delta_\omega(\phi-\frac{1}{V}\int_X \phi\omega^n)>-n$ and $\Delta_{\omega_\phi} (-\phi+\frac{1}{V}\int_X \phi\omega_\phi^n)>-n$   gives the control of $\sup\phi-\frac{1}{V}\int_X \phi\omega^n$ and $\inf \phi-\frac{1}{V}\int_X \phi\omega_\phi^n$, by the $L^2$ norm of $\phi$ under the two metrics $\omega, \omega_\phi$, quantities which are then bounded by the $I$ functional. The Poincar\'e and Sobolev bounds appear in the inequalities used above. \qed

We need the following comparison between the conical KE metric and the Fubini-Study metric:

\begin{lem} \label{lem1-6}
There exists a constant $C$ depending only on $\text{Osc }\phi_{t,\beta}$ so that 
$$\omega_{t, \beta}\geq C^{-1} \omega_{t, FS}$$ 
for $t\in \Delta^*$. 
\end{lem}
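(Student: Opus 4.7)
The plan is to apply Yau's Schwarz lemma to the identity map $(X_t\setminus D_t,\omega_{t,\beta})\to(X_t,\omega_{t,FS})$, paired with a maximum principle that uses a logarithmic barrier along $D_t$. Away from $D_t$ the conical K\"ahler-Einstein metric $\omega_{t,\beta}$ is smooth with $\mathrm{Ric}(\omega_{t,\beta})=r(\beta)\omega_{t,\beta}$ and $r(\beta)>0$. On the other hand the holomorphic bisectional curvature of $\omega_{t,FS}$ is bounded above by a constant $B$ independent of $t$: by the Gauss equation applied to the smooth embedding $X_t\hookrightarrow \C\P^N$, the bisectional curvature of the induced metric is dominated by that of the ambient Fubini-Study metric.

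Setting $u:=\mathrm{tr}_{\omega_{t,\beta}}\omega_{t,FS}$, the Chern-Lu inequality then reads
\begin{equation*}
\Delta_{\omega_{t,\beta}}\log u \;\geq\; r(\beta) \,-\, B\,u
\end{equation*}
on $X_t\setminus D_t$. Combining this with $\Delta_{\omega_{t,\beta}}\phi_{t,\beta}=n-u$ and the Poincar\'e-Lelong identity $\Delta_{\omega_{t,\beta}}\log|S_t|^2_{h_t^\lambda}=-\lambda u$ valid off $D_t$, I would apply the maximum principle to the auxiliary function
\begin{equation*}
\tilde H \;:=\; \log u \,-\, A\,\phi_{t,\beta} \,+\, \eps\,\log|S_t|^2_{h_t^\lambda}
\end{equation*}
for $A>B$ and $\eps>0$ small. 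The conical structure of $\omega_{t,\beta}$ forces the eigenvalue in the direction normal to $D_t$ to blow up, which pulls $u$ down and keeps it bounded near $D_t$; meanwhile $\eps\log|S_t|^2\to-\infty$ there, so $\tilde H\to-\infty$ along $D_t$ and attains its maximum at some $p_0\in X_t\setminus D_t$. There the inequality
\begin{equation*}
0 \;\geq\; \Delta_{\omega_{t,\beta}}\tilde H(p_0) \;\geq\; (A-B-\eps\lambda)\,u(p_0)+r(\beta)-An
\end{equation*}
produces a bound $u(p_0)\leq C_1$ depending only on $A,B,n,\lambda,\eps$.

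Propagating $\tilde H(p)\leq\tilde H(p_0)$ to an arbitrary $p\in X_t\setminus D_t$ and letting $\eps\to 0^{+}$ pointwise gives $u(p)\leq C_1 e^{A\cdot\mathrm{Osc}(\phi_{t,\beta})}$. An upper bound on $\mathrm{tr}_{\omega_{t,\beta}}\omega_{t,FS}$ is equivalent to a lower bound on every simultaneous eigenvalue of $\omega_{t,\beta}$ relative to $\omega_{t,FS}$, so $\omega_{t,\beta}\geq C^{-1}\omega_{t,FS}$ on $X_t\setminus D_t$; the inequality extends across $D_t$ because both sides are closed positive currents and $D_t$ is pluripolar. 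The resulting $C$ depends only on $\mathrm{Osc}(\phi_{t,\beta})$ together with the quantities ($B$, $\lambda$, $n$, a fixed choice of $A$) which are uniform in $t\in\Delta^{*}$. The main delicate point will be the handling of the cone singularity: one must verify the transverse asymptotic behavior of $\omega_{t,\beta}$ near $D_t$ in order to confirm that $u$ stays bounded there and that the barrier $\eps\log|S_t|^{2}$ truly localizes the maximum inside the smooth locus; the remainder is a standard Yau-Schwarz / maximum principle computation.
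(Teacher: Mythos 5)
Your proposal uses the same basic machinery the paper does: the Chern--Lu inequality applied to $u=\mathrm{tr}_{\omega_{t,\beta}}\omega_{t,FS}$, combined with the auxiliary term $-A\phi_{t,\beta}$ (exploiting $\Delta_{\omega_{t,\beta}}\phi_{t,\beta}=n-u$) and a maximum-principle argument, with the bisectional curvature of $\omega_{t,FS}$ controlled by that of $\mathbb{P}^N$ via the Gauss equation. The one genuine divergence is how you cross the cone singularity along $D_t$, and this is exactly the step you flag as the ``main delicate point.'' You work directly on the conical metric and add the barrier $\epsilon\log|S_t|^2_{h_t^\lambda}$ so that $\tilde H\to-\infty$ along $D_t$; for this you must first know that $\log u$ stays bounded (or at least grows more slowly than $-\epsilon\log|S_t|^2$ blows up) as one approaches $D_t$, which amounts to invoking the precise cone asymptotics of the conical K\"ahler--Einstein metric from the regularity theory (Jeffres--Mazzeo--Rubinstein, Guenancia--P\u{a}un, Chen--Wang). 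The paper instead sidesteps this issue entirely: it replaces $\omega_{t,\beta}$ by the explicit \emph{smooth} approximating metrics $\omega_{t,\beta}^\epsilon=\omega_{t,FS}+\sqrt{-1}\partial\bar\partial\phi^\epsilon_{t,\beta}$ of \cite{CDS1}, which solve the regularized Monge--Amp\`ere equation (\ref{eqn2-4}) and have nonnegative Ricci curvature, applies the maximum principle globally on the smooth $X_t$ (no barrier needed), obtains a bound depending only on $\mathrm{Osc}\,\phi^\epsilon_{t,\beta}$, and then passes to the limit $\epsilon\to0$ using the known $L^\infty$ and locally smooth convergence $\phi^\epsilon_{t,\beta}\to\phi_{t,\beta}$.

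Both routes are standard and both work, but they buy different things. Your barrier argument is slightly more self-contained once one grants the conical regularity input, and applies pointwise to the actual conical metric without any approximation. The paper's regularization route is cleaner in the present context because it requires no statement about the fine local structure of $\omega_{t,\beta}$ near $D_t$ (it only uses that the approximants have nonnegative Ricci, which is built into the construction of \cite{CDS1}), and because the same family $\omega_{t,\beta}^\epsilon$ is already used elsewhere in Section~2 (Lemma~\ref{lem2-12}, Proposition~\ref{prop3-14}), so no extra technology is introduced. If you wish to complete your version, cite the $C^{2,\alpha}_\beta$ polyhomogeneous (or at least uniform cone-equivalence) asymptotics for genuinely conical KE metrics to justify that $u$ is bounded near $D_t$ and that the barrier localizes the maximum; otherwise the regularization device is the more economical fix.
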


\begin{proof}
This follows from the Chern-Lu inequality, as used in \cite{JMR}, \cite{CDS2}. For convenience we provide an argument here. We assume $\beta<1$, and the argument for $\beta=1$ is simpler. As already explained above,  by \cite{CDS1}, $\omega_{t, \beta}$ can be approximated by a family of smooth K\"ahler metrics $\omega_{t, \beta}^\epsilon=\omega_{t, FS}+i\p\bp \phi_{t, \beta}^\epsilon$ on $X_t$ with positive Ricci curvature. Thus it suffices to prove the conclusion for $\omega_{t, \beta}^\epsilon$ uniformly. Let $e=\text{tr}_{\omega_{t, \beta}^\epsilon} \omega_{t, FS}$.  By a direct calculation we have for $A>0$ that  
$$\Delta_{\omega_{t,\beta}^\epsilon}(\log e-A\phi_{t, \beta}^\epsilon)\geq -nA+(A-C_2)e, $$
where $C_2$ is  the upper bound of bisectional curvature of $\omega_{t, FS}$. By standard Hermitian differential geometry, it follows that $C_2$ is bounded by the bisectional curvature of $\P^N$. Choose $A=C_2+1$, by maximum principle we obtain $e\leq C$ for $C$ depending only on $\text{Osc }\phi_{t, \beta}^\epsilon$. Let $\epsilon\rightarrow0$, we have $\phi_{t, \beta}^\epsilon$ converges in $L^\infty$ to $\phi_{t, \beta}$ on $X_t$ and the convergence is smooth away from the divisor $D_t$. The conclusion then follows. 
\end{proof}

 \begin{lem} \label{lem2-12}
 For any $r,A>0$, $\delta>0$ sufficiently small and $K\subset\subset (\mathcal X\setminus \mathcal D)|_{\p \Delta_r}$ and any $k>0$, there is a constant $C>0$ depending only on $r$, $A$, $\delta$, $K$ and $k$ so that for all the solutions $\phi_{t, \beta}^\epsilon$ of Equation (\ref{eqn2-4}) with $\beta>1-\lambda^{-1}+\delta$ and with $I_{\omega_t}(\phi_{t, \beta}^\epsilon)\leq A$ we have 
 $||\phi_{t,\beta}^\epsilon||_{C^k(K)}\leq C$.
\end{lem}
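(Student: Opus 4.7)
The plan is first to derive a uniform $L^\infty$-bound on $\phi_{t,\beta}^\epsilon$ via Lemma \ref{lem2-11}, and then to run a standard interior elliptic bootstrap on $K$. Since $K$ sits uniformly away from $\mathcal{D}$ by assumption, the right-hand side of \eqref{eqn2-4} will stay smooth with uniform bounds as $\epsilon\to 0$, which is what makes the bootstrap possible.

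The crucial input for applying Lemma \ref{lem2-11} is uniform control of the Poincar\'e and Sobolev constants of both $\omega_{t,FS}$ and $\omega_{t,\beta}^\epsilon$ for $t\in\partial\Delta_r$. For $\omega_{t,FS}$ this is automatic since $\mathcal{X}|_{\partial\Delta_r}$ is smooth and the metric varies smoothly. For $\omega_{t,\beta}^\epsilon$ I would differentiate \eqref{eqn2-4} to obtain
\begin{equation*}
\text{Ric}(\omega_{t,\beta}^\epsilon) = r(\beta)\,\omega_{t,\beta}^\epsilon + (1-\beta)\,\Theta_\epsilon, \qquad \Theta_\epsilon := \sqrt{-1}\p\bp \log(|S_t|_{h_t}^2 + \epsilon) + \lambda\,\omega_{t,FS}.
\end{equation*}
Writing locally $|S_t|_{h_t}^2+\epsilon = e^{-\lambda\psi}(|s|^2 + \epsilon e^{\lambda\psi})$ with $\sqrt{-1}\p\bp \psi = \omega_{t,FS}$, the function $\log(|s|^2 + \epsilon e^{\lambda\psi})$ is plurisubharmonic, being a log-sum-exp of the two plurisubharmonic functions $\log|s|^2$ and $\log\epsilon + \lambda\psi$; hence $\Theta_\epsilon \geq 0$ pointwise on $X_t$. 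Together with $r(\beta)\geq\lambda\delta>0$, this gives $\text{Ric}(\omega_{t,\beta}^\epsilon) \geq \lambda\delta\,\omega_{t,\beta}^\epsilon$ uniformly in $\epsilon,t,\beta$. Myers's theorem then bounds the diameter, and since the volume $V$ is a fixed topological invariant, Bishop-Gromov and standard comparison arguments yield uniform Poincar\'e and Sobolev constants for $\omega_{t,\beta}^\epsilon$.

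Plugging these bounds together with the hypothesis $I_{\omega_t}(\phi_{t,\beta}^\epsilon)\leq A$ into Lemma \ref{lem2-11} yields an oscillation bound $\mathrm{osc}(\phi_{t,\beta}^\epsilon)\leq C_1$. Integrating \eqref{eqn2-4} over $X_t$ gives the normalization
\begin{equation*}
V = \int_{X_t} e^{-r(\beta)\phi_{t,\beta}^\epsilon}\,(|S_t|_{h_t}^2+\epsilon)^{\beta-1}\,\vol_{h_t},
\end{equation*}
in which $\int_{X_t}(|S_t|_{h_t}^2+\epsilon)^{\beta-1}\,\vol_{h_t}$ is bounded above and below uniformly in $\epsilon$ by the KLT condition. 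Combining the oscillation bound with this normalization pins down both $\sup\phi_{t,\beta}^\epsilon$ and $\inf\phi_{t,\beta}^\epsilon$, giving $\|\phi_{t,\beta}^\epsilon\|_{L^\infty}\leq C_2$. The argument of Lemma \ref{lem1-6} (Chern-Lu) then applies directly to the approximating metric and produces $\omega_{t,\beta}^\epsilon \geq C_3^{-1}\,\omega_{t,FS}$ globally on $X_t$.

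To conclude, I would choose an enlargement $K\subset K'\subset\subset (\mathcal{X}\setminus\mathcal{D})|_{\partial\Delta_r}$. On $K'$, $|S_t|_{h_t}^2\geq c_0>0$ uniformly, so the right-hand side of \eqref{eqn2-4} is smooth with derivatives of all orders bounded uniformly in $\epsilon,t$. Combined with $\omega_{t,\beta}^\epsilon\geq C_3^{-1}\omega_{t,FS}$, the Monge-Amp\`ere equation is uniformly elliptic on $K'$, so Evans-Krylov gives an interior $C^{2,\alpha}$-estimate on $K$ and a standard Schauder bootstrap upgrades this to $C^k$ for any $k$. The principal difficulty is really the uniform positive Ricci lower bound for $\omega_{t,\beta}^\epsilon$: here both the pointwise non-negativity $\Theta_\epsilon\geq 0$ and the hypothesis $\beta\geq 1-\lambda^{-1}+\delta$ are essential, since otherwise the Poincar\'e and Sobolev constants could degenerate as $\epsilon\to 0$ and the entire chain of estimates would break down.
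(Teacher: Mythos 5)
Your proposal is correct and follows essentially the same route as the paper's proof: an oscillation estimate via Lemma \ref{lem2-11} (using uniform Poincar\'e/Sobolev constants from the positive Ricci lower bound for $\omega_{t,\beta}^\epsilon$), the Chern--Lu metric lower bound, two-sided metric control on a compact set $K$ away from $\mathcal{D}$, and Evans--Krylov plus a Schauder bootstrap. The only notable addition is your self-contained verification that $\mathrm{Ric}(\omega_{t,\beta}^\epsilon)\geq r(\beta)\,\omega_{t,\beta}^\epsilon$ via the plurisubharmonicity of $\log(|s|^2+\epsilon e^{\lambda\psi})$ as a log-sum-exp of PSH functions, a point the paper attributes to \cite{CDS1} without spelling it out.
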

 
\begin{proof}
The background Fubini-Study metric has uniform Poincar$\acute{\text{e}}$ and Sobolev constant by \cite{DT}, while the K\"ahler metric $\omega_{t,\beta}^\epsilon$ also has uniform Poincar$\acute{\text{e}}$ and Sobolev constant since it has a uniform positive lower bound $r(\beta)$ on the Ricci curvature. By Lemma \ref{lem2-11} the oscillation $\text{Osc }\phi_{t,\beta}^\epsilon$ is bounded by the $I$ functional. Then, by the above Lemma \ref{lem1-6} we have a uniform lower bound of the form $\omega_{t, \beta}^\epsilon>C^{-1}\omega_{t}$.  Away from the singularities $X_0^{sing}$ and the divisor $\D$, since $\frac{\vol_{h_t}}{\omega_t^n}$ is uniformly smooth in any uniform local holomorphic coordinate ball, we get the uniform upper bound $\omega_{t,\beta}^\epsilon<C\omega_t$. Together with Equation (\ref{eqn2-4}) with get that $|\phi_{t,\beta}^\epsilon|_{L^\infty}$ is uniformly bounded and then the standard elliptic $W^{2,p}$ estimate could yield us uniform $C^{1,\alpha}$ bound on $\phi_{t,\beta}^\epsilon$.  Then the standard Evans-Krylov theory \cite{Evans, Kry} for the complex Monge-Amp$\grave{\text{e}}$re equation could bootstrap to higher order bound on the KE potentials $\phi^\epsilon_{t,\beta}$.  \end{proof}
 
The next lemma shows that the $I$ functional is continuous under the above continuity of K\"ahler potentials. 
 
 \begin{lem} \label{lem2-13}
 Suppose we have $t_j\rightarrow t_0\in \Delta$, and suppose we have a sequence of potentials $\phi_j$ on $X_{t_j}$ with $\omega_{t_j}+\sqrt{-1}\p\bp \phi_j\geq 0$ and $|\phi_{t_j}|_{L^\infty}$ is uniformly bounded. Furthermore assume $\phi_j$ is $C^2$ on $X_{t_j}\setminus D_{t_j}$, and $\phi_j$ converges smoothly away from $\mathcal{X}^{sing}\cup \mathcal{D}$.  Then we have 
$$\lim_{j\rightarrow\infty} I_{\omega_{t_j}}(\phi_j)=I_{\omega_{t_0}}(\phi_0). $$
 \end{lem}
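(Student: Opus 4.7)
The plan is to localize the integral away from the singular locus $\X^{sing}\cup\D$ via a cutoff, use the assumed smooth convergence on the complement, and invoke pluripotential theory to make the remaining ``bad'' contribution uniformly small in $j$. Fix $\eta>0$ and let $V_\eta\subset\P^{N}$ be the Fubini-Study $\eta$-neighborhood of $X_0^{sing}\cup D_0$; split
$$I_{\omega_{t_j}}(\phi_j)=\int_{X_{t_j}\setminus V_\eta}\phi_j\,\frac{\omega_{t_j}^n-\omega_{\phi_j}^n}{n!}+\int_{X_{t_j}\cap V_\eta}\phi_j\,\frac{\omega_{t_j}^n-\omega_{\phi_j}^n}{n!},$$
and analogously for $I_{\omega_{t_0}}(\phi_0)$. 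For fixed $\eta$ and all $j$ large, $X_{t_j}\setminus V_\eta$ is a compact region uniformly separated from $\X^{sing}\cup\D$, so by hypothesis $\phi_j\to\phi_0$ smoothly and $\omega_{t_j}\to\omega_{t_0}$ smoothly there; the first piece therefore converges to its $\phi_0$-analogue as $j\to\infty$.

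For the second piece, the uniform $L^\infty$-bound on the $\phi_j$ reduces matters to controlling the two mass integrals $\int_{X_{t_j}\cap V_\eta}\omega_{t_j}^n$ and $\int_{X_{t_j}\cap V_\eta}\omega_{\phi_j}^n$. The first is bounded by the Fubini-Study $2n$-volume of a tube of radius $\eta$ around a complex subvariety of positive codimension and tends to zero with $\eta$ uniformly in $j$, since $X_{t_j}\to X_0$ in the Hausdorff sense in $\P^N$. For the second, the cohomological identity $\int_{X_{t_j}}\omega_{\phi_j}^n=V\cdot n!$ gives
$$\int_{X_{t_j}\cap V_\eta}\omega_{\phi_j}^n=V\cdot n!-\int_{X_{t_j}\setminus V_\eta}\omega_{\phi_j}^n,$$
and for fixed $\eta$ smooth convergence makes the right-hand piece arbitrarily close to $\int_{X_{t_0}\setminus V_\eta}\omega_{\phi_0}^n$ for $j$ large.

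The main obstacle is therefore the claim that $\int_{X_{t_0}\setminus V_\eta}\omega_{\phi_0}^n\to V\cdot n!$ as $\eta\to 0$, i.e.\ that the limiting Monge-Amp\`ere measure $\omega_{\phi_0}^n$ assigns no mass to $X_0^{sing}\cup D_0$. This is the step requiring genuine pluripotential theory: $\phi_0$ is bounded and $\omega_{t_0}$-plurisubharmonic on the normal variety $X_0$, so by the Bedford-Taylor / Eyssidieux-Guedj-Zeriahi framework already invoked in the paper, $\omega_{\phi_0}^n$ is a Radon measure of total mass $V\cdot n!$ which does not charge pluripolar sets; and $D_0\subset\{\log|S_0|_{h_0}=-\infty\}$ together with $X_0^{sing}$ (complex codimension $\ge 2$) is pluripolar. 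A standard $\eps/3$-argument --- first take $\eta$ small enough to dominate both $\int_{X_{t_0}\cap V_\eta}\omega_{\phi_0}^n$ and the Fubini-Study volume of $V_\eta\cap X_{t_0}$, then take $j$ large for the smooth piece --- concludes $I_{\omega_{t_j}}(\phi_j)\to I_{\omega_{t_0}}(\phi_0)$.
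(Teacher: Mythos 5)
Your proof is correct and uses the same basic localization strategy as the paper: split $I_{\omega_{t_j}}(\phi_j)$ into a piece away from $\X^{sing}\cup\D$, handled by the assumed smooth convergence, and a piece near it, handled by the uniform $L^\infty$ bound together with smallness of the relevant Monge--Amp\`ere masses. Where you go further is the treatment of the near-singular mass of $\omega_{\phi_j}^n$: the paper only observes that $\int_{X_0\setminus U_0}\omega_{\phi_0}^n$ can be made small (i.e.\ the limiting measure does not charge the singular locus), and leaves the corresponding uniform-in-$j$ estimate for $\int_{X_{t_j}\cap V_\eta}\omega_{\phi_j}^n$ implicit. You supply this via the total-mass identity $\int_{X_{t_j}}\omega_{\phi_j}^n=n!V$ combined with smooth convergence on $X_{t_j}\setminus V_\eta$ and the Bedford--Taylor / EGZ fact that $\omega_{\phi_0}^n$ does not charge the pluripolar set $X_0^{sing}\cup D_0$. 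This is exactly the ``no mass escapes to the singular set'' point the paper's proof elides, and making it explicit is worthwhile; your $\eps/3$ organization is clean. The only small notational wrinkle is that the smooth convergence is assumed away from $\X^{sing}\cup\D$ in the total space, so $V_\eta$ should be thought of as a neighborhood of $(\X^{sing}\cup\D)\cap X_0 = X_0^{sing}\cup D_0$ chosen so that for $j$ large both $\X^{sing}\cap X_{t_j}$ (empty for $t_j\neq 0$) and $D_{t_j}$ lie inside $V_\eta$ --- which is automatic by Hausdorff convergence of the cycles, as you observe.
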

 \begin{proof}
 We assume $t_0=0$. The other case is simpler. We write 
 $$I_{\omega_{t_j}}(\phi_j)=\int_{U_{t_j}} \phi_j(\omega_{t_j}^n-(\omega_{t_j}+i\p\bp\phi_j)^n)+\int_{X_{t_j}\setminus U_{t_j}} \phi_j(\omega_{t_j}^n-(\omega_{t_j}+i\p\bp\phi_j)^n)$$
 It is clear that the first term converges to zero. For the second term, we notice that we can choose $U_0$ in $X_0$ so that $\int_{X_0\setminus U_0} \omega_{0}^n$ and $\int_{X_0\setminus U_0} (\omega_0+i\p\bp\phi_0)^n$ arbitrarily small since the complement is arbitrarily close to the volume of $X_0$. 
 \end{proof}
 
\begin{lem}\label{lem2-14}
For any $t_0\in \Delta^*$, there exists $\delta_{t_0}>0$, such that  for any $t\in B_{\delta_{t_0}}(t_0)$ and $\epsilon \in (0,\delta_{t_0}]$
$$|I_{\omega_t}(\phi_{t,\beta}^\epsilon)-I_{\omega_{t_0}}(\phi_{t_0,\beta}^\epsilon)|\leq 1.$$ 
 \noindent Consequently by Lemma \ref{lem2-12} for any $K\subset\subset \Delta^*$ there exists $C_K>0$ and $\delta_K>0$ such that 
 $$\sup_{\epsilon\in (0, \delta_K], t\in K}||\phi_{t,\beta}^\epsilon||_{L^\infty}\leq C_K.$$ 
 \end{lem}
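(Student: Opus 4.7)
The key observation is that $t_0\in\Delta^*$ lies in the smooth locus of $\mathcal{X}$, so equation (\ref{eqn2-4}) becomes, for each fixed $\epsilon>0$, a smooth elliptic family in $t$. The strategy is to seed uniform-in-$\epsilon$ estimates at $t_0$ from the convergence result of \cite{CDS1} and Lemma \ref{lem2-12}, propagate them to nearby $t$ by continuity of the Monge--Amp\`ere equation in $t$, control the near-divisor contribution to the $I$-functional via the KLT condition, and close a bootstrap between the $I$-bound and the $L^\infty$-bound using Lemma \ref{lem2-11} together with the Monge--Amp\`ere normalization.

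\textbf{Step 1 (seed bounds at $t_0$).} By \cite{CDS1} the smooth approximations $\phi_{t_0,\beta}^\epsilon$ converge to $\phi_{t_0,\beta}$ in $L^\infty(X_{t_0})$ as $\epsilon\to 0^+$, and in particular $\|\phi_{t_0,\beta}^\epsilon\|_{L^\infty}\leq C$ uniformly for $\epsilon\in(0,\epsilon_0]$. Since $|I_{\omega_{t_0}}(\phi)|\leq 2V\|\phi\|_{L^\infty}$, this yields a uniform bound $I_{\omega_{t_0}}(\phi_{t_0,\beta}^\epsilon)\leq A$, and Lemma \ref{lem2-12} then provides uniform $C^k$-bounds for $\phi_{t_0,\beta}^\epsilon$ on every compact subset of $X_{t_0}\setminus D_{t_0}$.

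\textbf{Step 2 (continuity in $t$ uniform in $\epsilon$).} For each fixed $\epsilon>0$, equation (\ref{eqn2-4}) is a smooth family of elliptic Monge--Amp\`ere equations with unique solutions depending smoothly on $t$. To upgrade this smooth dependence to continuity of $I_{\omega_t}(\phi_{t,\beta}^\epsilon)$ in $t$ uniform in $\epsilon$, pick a tubular neighborhood $U_\eta$ of the total divisor $\mathcal{D}\subset \mathcal{X}$ and split
$$I_{\omega_t}(\phi_{t,\beta}^\epsilon)=\int_{X_t\setminus U_\eta}\phi_{t,\beta}^\epsilon(\omega_t^n-\omega_{\phi_{t,\beta}^\epsilon}^n)+\int_{U_\eta\cap X_t}\phi_{t,\beta}^\epsilon(\omega_t^n-\omega_{\phi_{t,\beta}^\epsilon}^n).$$
Step 1, combined with elliptic perturbation of (\ref{eqn2-4}) for $t$ in a small disc, yields uniform-in-$\epsilon$ smooth control on $X_t\setminus U_\eta$, making the first integral continuous in $t$ with modulus independent of $\epsilon$. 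The second integral is bounded by $\|\phi_{t,\beta}^\epsilon\|_{L^\infty}\cdot\bigl(\int_{U_\eta\cap X_t}\omega_t^n+\int_{U_\eta\cap X_t}\omega_{\phi_{t,\beta}^\epsilon}^n\bigr)$; by the KLT condition both measures have mass $O(\eta)$ on $U_\eta$ uniformly in $\epsilon,t$ (for the Monge--Amp\`ere side, $\omega_{\phi_{t,\beta}^\epsilon}^n=e^{-r(\beta)\phi_{t,\beta}^\epsilon}d\mu_{t,\epsilon}$ with $d\mu_{t,\epsilon}$ a uniformly $L^1$ measure).

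\textbf{Step 3 (closing the bootstrap and conclusion).} To supply the required uniform $L^\infty$ bound on $\phi_{t,\beta}^\epsilon$, apply Lemma \ref{lem2-11}: once an $I$-bound is available, the uniform Poincar\'e/Sobolev constants of $\omega_t$ (the Fubini--Study family) and of $\omega_{t,\beta}^\epsilon$ (which satisfies the uniform positive Ricci lower bound $\mathrm{Ric}\geq r(\beta)\omega_{t,\beta}^\epsilon$) convert it into a uniform oscillation bound. Integrating (\ref{eqn2-4}) yields the normalization
$$V=\int_{X_t}e^{-r(\beta)\phi_{t,\beta}^\epsilon}d\mu_{t,\epsilon},$$
in which $\int d\mu_{t,\epsilon}$ is bounded above and below independently of $\epsilon,t$ by KLT; this upgrades the oscillation bound to a two-sided $L^\infty$ bound. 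Taking $\eta$ small and then $\delta_{t_0}>0$ small, the split of Step 2 gives $|I_{\omega_t}(\phi_{t,\beta}^\epsilon)-I_{\omega_{t_0}}(\phi_{t_0,\beta}^\epsilon)|\leq 1$ on $B_{\delta_{t_0}}(t_0)\times(0,\delta_{t_0}]$. For the ``Consequently'' statement, cover $K\subset\subset\Delta^*$ by finitely many such balls $B_{\delta_{t_j}}(t_j)$; in each, the first statement together with the Step 1 seed at $t_j$ gives a uniform $I$-bound, and the $I\Rightarrow\mathrm{osc}\Rightarrow L^\infty$ bootstrap yields $\sup_{\epsilon\in(0,\delta_K],t\in K}\|\phi_{t,\beta}^\epsilon\|_{L^\infty}\leq C_K$. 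The main obstacle is extracting continuity of $I$ in $t$ uniform in $\epsilon$ despite the divisor degeneration as $\epsilon\to 0$; the resolution rests on the cutoff $U_\eta$, the KLT-uniform control of the singular measures, and the bootstrap enabled by the uniform positive Ricci lower bound $r(\beta)$.
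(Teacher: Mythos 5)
There is a genuine gap in your argument, centered on a circularity between Steps 2 and 3. To bound the far-from-divisor part of $I_{\omega_t}(\phi_{t,\beta}^\epsilon)$ in Step 2, you invoke ``elliptic perturbation of (\ref{eqn2-4}) for $t$ in a small disc'' to get uniform-in-$\epsilon$ smooth control on $X_t\setminus U_\eta$. But Lemma \ref{lem2-12}, which is the tool producing such control, requires an \emph{a priori} bound $I_{\omega_t}(\phi_{t,\beta}^\epsilon)\leq A$ as input at the point $t$ in question, and your seed bound from Step 1 gives this only at $t=t_0$. For $t\neq t_0$ you have no such bound yet --- that is precisely what you are trying to establish. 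For a fixed $\epsilon>0$ the solution of course depends smoothly on $t$, but the modulus of that dependence degrades as $\epsilon\to 0$ unless you already know the solutions stay in a compact set, which requires an $I$- or $L^\infty$-bound at $t$. Likewise the near-divisor term in Step 2 is bounded by $\|\phi_{t,\beta}^\epsilon\|_{L^\infty}\cdot O(\eta)$, but $\|\phi_{t,\beta}^\epsilon\|_{L^\infty}$ is, in your Step 3, supposed to be \emph{derived from} the $I$-bound, not presupposed. So you need the $I$-bound at $t$ to get the $L^\infty$-bound at $t$, and the $L^\infty$-bound at $t$ to get the $I$-bound at $t$; the bootstrap never opens.

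The paper breaks this circularity by an indirect compactness argument rather than trying to prove uniform continuity directly. Suppose the conclusion fails: there are $t_j\to t_0$, $\epsilon_j\to 0$ with $|I_{\omega_{t_j}}(\phi_{t_j,\beta}^{\epsilon_j})-I_{\omega_{t_0}}(\phi_{t_0,\beta}^{\epsilon_j})|>1$. Since for each fixed $\epsilon_j$ the $I$-functional depends continuously on $t$ (this is continuity in $t$ for \emph{fixed} $\epsilon$, not uniform in $\epsilon$, so it is available), one may slide $t_j$ towards $t_0$ to arrange that the gap is exactly $1$. The equality $=1$, combined with your seed bound at $t_0$, now \emph{provides} the $I$-bound $I_{\omega_{t_j}}(\phi_{t_j,\beta}^{\epsilon_j})\leq A+1$, which is precisely the missing a priori input to Lemma \ref{lem2-12}. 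Compactness then extracts a limit metric on $X_{t_0}\setminus D_{t_0}$ solving the conical K\"ahler--Einstein equation, which by the uniqueness theorem of \cite{BBEGZ} (and Remark \ref{rmk2-7}) must equal $\omega_{t_0,\beta}$; but then Lemma \ref{lem2-13} forces the gap in $I$ to converge to $0$, contradicting the normalization $=1$. Your Step 1 and your control of the singular tail via the KLT condition are both sound and both appear in the paper's argument, but you need the contradiction-with-normalization device to legitimately seed Lemma \ref{lem2-12} at the nearby parameters.
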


\begin{proof}
For $t_0\in \Delta^*$, suppose the bound does not hold. Then we could pick a sequence $t_j\to t_0$ and $\epsilon_j\to 0$, such that $|I_{\omega_{t_j}}(\phi_{t_j,\beta}^{\epsilon_j})-I_{\omega_{t_0}}(\phi_{t_0,\beta}^{\epsilon_j})|>1$. By the continuous dependence on $t$ of these quantities for any fixed $\epsilon>0$, we may further assume (by suitably changing $t_j$) that  $$|I_{\omega_{t_j}}(\phi_{t_j,\beta}^{\epsilon_j})-I_{\omega_{t_0}}(\phi_{t_0,\beta}^{\epsilon_j})|=1.$$ 

The quantity $I_{\omega_{t_0}}(\phi_{t_0,\beta}^\epsilon)$ is uniformly bounded for $\epsilon\in(0, 1]$ by applying Ko\l odziej's estimate \cite{Ko} on the fixed manifold $X_{t_0}$, see \cite{CDS1}.  So we get a uniform bound on $||\phi_{t_j,\beta}^{\epsilon_j}||_{L^\infty}$ by Lemma \ref{lem2-12}, and then by Lemma \ref{lem2-12} we get a subsequence which converges to a K\"ahler-Einstein metric $\tilde\omega_{t_0,\beta}$ on $X_{t_0}\backslash D_{t_0}$, and moreover $\tilde\omega_{t_0,\beta}$ has a locally continuous K\"ahler potential $\tilde\phi_{t_0,\beta}$ and satisfies the Monge-Amp$\grave{\text{e}}$re equation: 

$$\tilde\omega_{t_0,\beta}^n=e^{-r(\beta)\tilde\phi_{t_0,\beta}}|S_{t_0}|_{h_{t_0}}^{2\beta-2}\vol_{h_{t_0}}$$
on $X_{t_0}\backslash D_{t_0}$.  It is thus a \emph{weak conical K\"ahler-Einstein metric} and must concide with $\omega_{t_0,\beta}$ by the uniqueness \cite{BBEGZ} and Remark \ref{rmk2-7}. This is a contradiction since the $I$ functional is continuous under the limit, i.e.
$$|I_{\omega_{t_0}}(\tilde\phi_{t_0,\beta})-I_{\omega_{t_0}}(\phi_{t_0,\beta})|=1.$$ 
\end{proof}

\begin{prop} \label{prop3-14}
Fix $r>0$. As functions of $t$ on $\p \Delta_r$,  the family $F_{\omega_t, (1-\beta)D_t}(\Psi^\epsilon(t, \cdot))$ converges to $F_{t,\beta}$ uniformly. In particular, $F_{t,\beta}$ is a continuous function of $t$. 
\end{prop}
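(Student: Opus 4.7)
The plan is to derive the uniform convergence as $\epsilon \to 0$ from the fiberwise convergence $\phi^\epsilon_{t,\beta} \to \phi_{t,\beta}$ by exploiting the uniform estimates already proven in Lemmas~\ref{lem2-12}--\ref{lem2-14}, and then to obtain continuity of $F_{t,\beta}$ as a uniform limit of continuous functions. Since $\partial\Delta_r \subset \Delta^\ast$ is compact, Lemma~\ref{lem2-14} supplies a uniform $L^\infty$-bound $\|\phi^\epsilon_{t,\beta}\|_{L^\infty} \leq C_r$ for all $\epsilon \in (0,\delta_r]$ and $t \in \partial\Delta_r$, which Lemma~\ref{lem2-12} upgrades to uniform $C^k$-bounds on any compact subset of $\mathcal{X} \setminus \mathcal{D}$ lying over $\partial\Delta_r$. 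Letting $\epsilon \to 0$ transfers these bounds to $\phi_{t,\beta}$ itself.

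The core step is to show that for any sequences $\epsilon_j \to 0$ and $t_j \to t_\ast \in \partial\Delta_r$, after choosing a smooth local trivialization of the family near $t_\ast$ to identify the nearby fibers with $X_{t_\ast}$, the potentials $\phi^{\epsilon_j}_{t_j,\beta}$ converge along a subsequence to $\phi_{t_\ast,\beta}$ uniformly in $L^\infty$ on $X_{t_\ast}$ and smoothly on compact subsets of $X_{t_\ast} \setminus D_{t_\ast}$. The bounds above extract a subsequential limit $\tilde\phi_{t_\ast,\beta}$ which, by passing to the limit in equation~(\ref{eqn2-4}) as $\epsilon_j \to 0$, satisfies the weak conical K\"ahler--Einstein equation on $(X_{t_\ast},(1-\beta)D_{t_\ast})$; uniqueness from \cite{BBEGZ, SW} (cf.~the contradiction argument at the end of Lemma~\ref{lem2-14}) then forces $\tilde\phi_{t_\ast,\beta} = \phi_{t_\ast,\beta}$.

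Next, I would verify that both pieces of the log-Ding functional pass to the limit along such convergence. For $F^1$, the integrand $e^{-r(\beta)\phi^{\epsilon_j}_{t_j,\beta}}|S_{t_j}|_{h_{t_j}}^{2\beta-2}\vol_{h_{t_j}}$ is uniformly dominated by an $L^1$ function in the family (via the KLT condition combined with the uniform $L^\infty$-bound on the potentials), so dominated convergence yields the limit. For each summand in $F^0$ of the form $\int_{X_{t_j}}\phi^{\epsilon_j}_{t_j,\beta}(\omega_{t_j}+\sqrt{-1}\partial\bar\partial\phi^{\epsilon_j}_{t_j,\beta})^i \wedge \omega_{t_j}^{n-i}$, I would split the domain into a tubular neighborhood $U_\eta$ of $\mathcal{D}$ and its complement: the $U_\eta$-contribution is $O(\eta)$ uniformly in $j$ by the $L^\infty$-bound together with the vanishing of $\int_{X_{t_j}\cap U_\eta}\omega_{t_j}^n$ as $\eta \to 0$, while on $X_{t_j}\setminus U_\eta$ the smooth convergence (after trivializing the family) makes the integrals pass to the classical limit.

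Combining these ingredients gives $F_{\omega_{t_j},(1-\beta)D_{t_j}}(\phi^{\epsilon_j}_{t_j,\beta}) \to F_{t_\ast,\beta}$; a direct contradiction argument then promotes this to uniform convergence in $t \in \partial\Delta_r$ as $\epsilon \to 0$. Continuity of $F_{t,\beta}$ on $\Delta^\ast$ follows either by running the same limiting argument with $\epsilon_j \equiv 0$ (so that $\phi_{t_j,\beta}\to\phi_{t_\ast,\beta}$ and hence $F_{t_j,\beta}\to F_{t_\ast,\beta}$), or by observing that $F_{t,\beta}$ is a uniform limit of the functions $t\mapsto F_{\omega_t,(1-\beta)D_t}(\phi^\epsilon_{t,\beta})$, each continuous in $t$ by the smooth dependence of solutions to the regularized equation~(\ref{eqn2-4}) on the parameter. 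The main technical obstacle I anticipate is the uniform Bedford--Taylor-type convergence of the mixed Monge--Amp\`ere currents appearing in $F^0$ across the divisor $\mathcal{D}$; the tubular cut-off together with the uniform $L^\infty$-bound is the key device, replacing pointwise current convergence with a small-volume estimate near $\mathcal{D}$ and classical smooth convergence on its complement.
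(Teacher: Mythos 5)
Your overall plan mirrors the paper's: bound $\|\Psi^\epsilon\|_{L^\infty}$ uniformly on $\partial\Delta_r$ via Lemma~\ref{lem2-14}, split each mixed Monge--Amp\`ere integral in $F^0$ into a tubular neighborhood of $\mathcal D$ and its complement, obtain small contribution near $\mathcal D$ and smooth convergence away from it, and handle $F^1$ by dominated convergence. The conclusion and decomposition are the same. However, there is a genuine gap in your near-divisor estimate: you claim the $U_\eta$-contribution is $O(\eta)$ uniformly in $j$ "by the $L^\infty$-bound together with the vanishing of $\int_{X_{t_j}\cap U_\eta}\omega_{t_j}^n$." The $L^\infty$-bound disposes of the factor $\phi^{\epsilon_j}_{t_j,\beta}$, but the remaining mixed mass $\int_{U_\eta}(\omega_{t_j}+\sqrt{-1}\partial\bar\partial\phi^{\epsilon_j}_{t_j,\beta})^i\wedge\omega_{t_j}^{n-i}$ for $i>0$ is \emph{not} controlled by $\int_{U_\eta}\omega_{t_j}^n$; as $\epsilon_j\to 0$ the approximating metric becomes conical and its Monge--Amp\`ere density concentrates near $\mathcal D$, so simply observing that the fixed reference measure $\omega_{t_j}^n$ has little mass there is insufficient and not uniform in $j$.

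The missing ingredient, which the paper uses at exactly this point, is the rough $C^2$ estimate derived from the Chern--Lu inequality (as in Lemma~\ref{lem1-6}): the $L^\infty$ bound forces a uniform lower bound $\omega^{\epsilon}_{t,\beta}\geq C^{-1}\omega_t$, hence $(\omega^\epsilon_{t,\beta})^i\wedge\omega_t^{n-i}\leq C(\omega^\epsilon_{t,\beta})^n$, and then the Monge--Amp\`ere equation~(\ref{eqn2-4}) gives the dominating $L^1$ function $C\,\omega_t^n/(|S_t|^2_{h_t}+\epsilon)^{1-\beta}$, whose integral over $U_\eta$ tends to $0$ uniformly in $\epsilon$ and $t$ by the KLT condition. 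You do state at the end that the "uniform Bedford--Taylor-type convergence" is the main obstacle and propose the tubular cut-off plus $L^\infty$-bound as the resolution, but without the Chern--Lu input this does not actually close the estimate. Apart from this, your treatment of $F^1$ and of the away-from-divisor piece matches the paper, and your use of a sequential/contradiction argument for uniform convergence is an acceptable stylistic variant of the paper's direct $C(\delta)$ estimate.
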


\begin{proof}
\noindent By Lemma \ref{lem2-14}, there exists $C, \epsilon_0$ such that:

$$\sup_{\epsilon\in (0,\epsilon_0]}\sup_{\partial \X_r}||\Psi^\epsilon||_{L^\infty}<C$$
therefore we get a priori \emph{rough $C^2$ estimate} by a applying Chern-Lu inequality (see \cite{CDS1} for more detail):
$$ C^{-1}\omega_t\leq \omega_{{t,\beta}}^\epsilon\leq C\frac{\omega_t}{(|S_t|_{h_t}^2+\epsilon)^{1-\beta}}$$

\noindent and for any fixed $\delta>0$, 

$$\Psi^\epsilon\xrightarrow{C^\infty((\partial \X_r)\backslash \D_\delta)} \Psi.$$
Recall the definition of the log-Ding-functional:
\begin{equation*}
F_{\omega_t, (1-\beta)D_t}(\phi)
=-\sum_{i=0}^n\frac{1}{(n+1)!} \int_{X_t}\phi (\omega_t+\sqrt{-1}\partial\bar\partial \phi)^i\wedge\omega_t^{n-i}
-\frac{V}{r(\beta)}\log \frac{1}{V}\int_{X_t} e^{-r(\beta)\phi}|{S}_t|_{h_t}^{2\beta-2}\text{vol}_{h_t}
\end{equation*}

\noindent To deal with the first term in the log-Ding-functional, around the divisor $\D$, the above bound together with Equation (\ref{eqn2-4}) gives 
$$(\omega_{t, \beta}^\epsilon)^i\omega_t^{n-i}\leq C(\omega_{t, \beta}^\epsilon)^n\leq C\frac{\omega_t^n}{(|S_t|_{h_t}^2+\epsilon)^{1-\beta}}$$
therefore 
$$|\int_{X_t\cap\D_\delta} \phi^\epsilon_{t,\beta}(\omega_{t, \beta}^\epsilon)^i\omega_t^{n-i}|
\leq C^{n-i}||\phi_{t,\beta}^\epsilon||_{L^\infty}\int_{X_t\cap\D_\delta}\frac{\omega_t^n}{(|S_t|_{h_t}^2+\epsilon)^{1-\beta}}<C(\delta)$$ 
with $C(\delta)\to 0$ as $\delta$ goes to $0$. For any fixed $\delta>0$ small, the integrals on the complement $X_t\backslash \D_\delta$ converges by the smooth convergence of the potentials , therefore $F_{\omega_t}^0(\Psi^\epsilon(t,\cdot))$ converges to $F_{\omega_t}^0(\phi_{t,\beta})$ uniformly. Similarly the second term in the log-Ding-functional converges uniformly.
\end{proof}

\begin{prop}\cite{PS} \label{prop3-15} The Dirichlet problem (\ref{eqn2-3}) with $\Psi=\Psi^\epsilon$ has a generalized solution $\Phi^\epsilon$ which is  uniformly bounded on $\mathcal X$ (i.e. $||\Phi^\epsilon||_{L^\infty(\mathcal{X})}<C$ for a $C$ independent of $\epsilon$), and locally $C^{1, \alpha}$ away from the singular points of $\mathcal X$. 
\end{prop}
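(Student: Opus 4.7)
The plan is to construct $\Phi^\epsilon$ via the Perron envelope method for the homogeneous complex Monge--Amp\`ere operator, following Bedford--Taylor and Phong--Sturm. Explicitly, set
$$\Phi^\epsilon := \sup\Bigl\{\Phi'\in PSH(\mathcal{X}_r,\Omega)\cap L^\infty(\mathcal{X}_r)\ \Big|\ \limsup_{z\to\zeta}\Phi'(z)\leq \Psi^\epsilon(\zeta)\ \text{for all }\zeta\in\partial\mathcal{X}_r\Bigr\}.$$
Since $\mathcal{X}^{sing}$ is an analytic (hence pluripolar) subset of $\mathcal{X}_r$ and $\mathcal{X}_r\setminus\mathcal{X}^{sing}$ is a smooth complex manifold with boundary (away from the singular locus), standard Bedford--Taylor pluripotential theory applies on the regular part and extends across $\mathcal{X}^{sing}$ via removable singularity of bounded psh functions. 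One then checks that the upper semicontinuous regularization $(\Phi^\epsilon)^*$ is $\Omega$-psh, takes the boundary value $\Psi^\epsilon$, and satisfies $(\Omega+\sqrt{-1}\partial\bar\partial \Phi^\epsilon)^{n+1}=0$ in the pluripotential sense.

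For the uniform $L^\infty$ bound, the upper bound $\Phi^\epsilon\leq \sup_{\partial\mathcal{X}_r}\Psi^\epsilon$ follows immediately from the maximum principle since constants with this value are admissible competitors. For the lower bound, note that $\partial\Delta_r\subset\subset\Delta^*$, so Lemma~\ref{lem2-14} produces a uniform constant $C_r$ with $\|\Psi^\epsilon\|_{L^\infty(\partial\mathcal{X}_r)}\leq C_r$ for all small $\epsilon>0$. Since $\Omega=\lambda^{-1}\omega_{FS}+\sqrt{-1}dt\wedge d\bar t$ is a positive $(1,1)$-form, the function $-C_r+A(|t|^2-r^2)$ is $\Omega$-psh for all $A>0$, nonpositive on $\mathcal{X}_r$, and $\leq \Psi^\epsilon$ on $\partial\mathcal{X}_r$; this is an admissible competitor in the envelope, giving the desired uniform lower bound $\Phi^\epsilon\geq -C_r-Ar^2$.

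For the $C^{1,\alpha}$ regularity on compact subsets of $\mathcal{X}_r\setminus\mathcal{X}^{sing}$, around any interior point we are in the smooth Kähler setting and may invoke Chen's $C^{1,1}$ estimate together with the interior regularity theorem of Phong--Sturm \cite{PS} for bounded solutions of the homogeneous complex Monge--Amp\`ere equation. For points near $\partial\mathcal{X}_r\setminus\mathcal{X}^{sing}$ (which is what we actually need), the boundary regularity of Phong--Sturm applies on local Stein charts provided the boundary data is sufficiently regular there; since $\Psi^\epsilon\in C^\infty((\partial\mathcal{X}_r)\setminus\mathcal{D})$ (being the potential of the smooth approximating equation (\ref{eqn2-4})), this is not an issue away from the base locus.

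The main technical obstacle is the interaction between the two singular strata $\mathcal{X}^{sing}$ and $\mathcal{D}$: near $\mathcal{D}$ the boundary data $\Psi^\epsilon$ does not converge smoothly as $\epsilon\to 0$, but fortunately we only claim regularity \emph{away from $\mathcal{X}^{sing}$} for each fixed $\epsilon$ (not uniformly in $\epsilon$), and only the $L^\infty$ bound needs to be uniform; the latter is governed entirely by $\sup|\Psi^\epsilon|$ on $\partial\mathcal{X}_r$, which is already controlled by Lemma~\ref{lem2-14}. Conversely, across $\mathcal{X}^{sing}$ we make no regularity claim, which is precisely where removability of bounded pluripotential solutions saves us.
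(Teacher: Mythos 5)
Your approach is genuinely different from the paper's. The paper passes to a log resolution $\pi:\tilde{\mathcal{X}}\to\mathcal{X}$, so that the ambient space is smooth, and then perturbs the pulled-back form to a family of honest K\"ahler metrics $\pi^*\Omega+\delta\Omega_{FS}$; it solves the resulting Dirichlet problems for $\delta>0$, records uniform $L^\infty$ and interior $\sqrt{-1}\partial\bar\partial$-bounds on compacta of $\tilde{\mathcal{X}}\setminus\bigcup E_i$, and then lets $\delta\to 0$. You instead construct $\Phi^\epsilon$ directly as a Perron envelope on the singular total space $\mathcal{X}_r$, invoking the extension of Bedford--Taylor theory across the (compact, interior) analytic subset $\mathcal{X}^{sing}$. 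Both routes are plausible. The paper's desingularization cleanly sidesteps the foundational issues of pluripotential theory on singular complex spaces, and aligns literally with the hypotheses of \cite{PS}; the price is the auxiliary $\delta$-limit. Your route avoids the resolution, but two points deserve to be made precise rather than asserted. First, to see that the envelope actually attains the boundary data $\Psi^\epsilon$ (not merely a uniform lower bound) one needs pointwise lower barriers at each $\zeta\in\partial\mathcal{X}_r$ adapted to the Levi-flat boundary $\{|t|=r\}$; the single global barrier $-C_r+A(|t|^2-r^2)$ gives $\Phi^\epsilon\geq -C_r$ but not $\liminf_{z\to\zeta}\Phi^\epsilon(z)\geq\Psi^\epsilon(\zeta)$, and the standard fix uses strict $\Omega$-plurisubharmonicity of $|t|^2$ in the $t$-direction together with the continuity of $\Psi^\epsilon$. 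Second, the appeal to ``Phong--Sturm boundary regularity on Stein charts'' is not quite the right reference, since $\partial\mathcal{X}_r$ is Levi-flat (foliated by the compact fibers $X_t$), not strictly pseudoconvex; the relevant statement is the interior $C^{1,\alpha}$ (in fact $C^{1,1}$) regularity for the geodesic-type homogeneous Monge--Amp\`ere equation over an annulus/disc as in \cite{PS} and Chen's estimate, which is exactly what the paper extracts via the $\delta$-perturbation. With those two clarifications your argument would also go through.
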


\begin{proof}
Take any log resolution, denoted by $\pi: \tilde{\mathcal{X}}\rightarrow \mathcal{X}$. Since $\mathcal{X}$ sits inside projective space, $\tilde{\mathcal{X}}$ also could be chosen to sit inside some projective space, with a Fubini-Study metric $\Omega_{FS}$. Then $\pi^*\Omega+\delta\Omega_{FS}$ is a family of smooth K\"ahler metrics in $\tilde{\mathcal{X}}$ for $\delta\in (0, 1)$. Try to solve the family of equations corresponding to $\delta\in (0,1)$:

 \begin{equation*} \label{}  \left\{ \begin{array}{ll}
               (\pi^*\Omega+\delta \Omega_{FS}+\sqrt{-1}\partial\bar\partial \Phi)^{n+1}=0;\\
               \Omega+\delta\Omega_{FS}+\sqrt{-1}\partial\bar\partial \Phi\geq 0\\
               \Phi|_{\partial \mathcal{X}_r}=\Psi.
               \end{array}\right.
               \end{equation*}
This is solvable for each $\delta$ with the solution $\underline{\Phi_\delta}$ satisfying a priori bound:

$$||\underline{\Phi_\delta}||_{L^\infty}<C$$
$$|\sqrt{-1}\p\bp\underline{\Phi_\delta}|<C$$
for any $K\subset \subset \tilde{\mathcal{X}}\backslash \cup E_i$. By letting $\delta$ go to $0$, we achieved the generalized solution for Equation \ref{eqn2-3} with the required regularity and estimate claimed.
\end{proof}

 Denote by $\Phi^\epsilon_t$ the restriction of $\Phi^\epsilon$ on  $X_t$. For $t\in \Delta_r$,   let 
$$f^\epsilon(t)=F_{\omega_t}^0(\Phi^\epsilon_t)$$
and 
$$g^\epsilon(t)=F_{\omega_t}^1(\Phi^\epsilon_t)$$
Then by definition the log-Ding-functional $F_{\omega_t, (1-\beta)D_t}(\Phi^\epsilon_t)$ is the sum of these two functions. 

By the general theory on \emph{positivity of direct image bundle} according to \cite{Bern} and positivity of \emph{Deligne Pairing}  according to \cite{Zhang}, both pieces of the log-Ding-functional are subharmonic functions of $t\in \Delta^*$. Now we suppress the superscript $\epsilon$ for simplicity and prove the continuity and subharmonicity of $g$ and $f$ on $\Delta_r$ respectively.

\begin{prop}\label{prop2-17}
$g$ is continuous and subharmonic on $\Delta_r$. 
\end{prop}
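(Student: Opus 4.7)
The plan is to establish the two properties separately: continuity of $g$ on all of $\Delta_r$, and subharmonicity first on the punctured disk $\Delta_r^{*}:=\Delta_r\setminus\{0\}$ (where all fibers are smooth), which is then extended across the origin using the continuity.

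For continuity, the key inputs are Proposition \ref{prop3-15} together with the KLT hypothesis. By Proposition \ref{prop3-15}, the potential $\Phi=\Phi^{\epsilon}$ is uniformly bounded on $\mathcal{X}_r$ and locally $C^{1,\alpha}$ away from the finitely many singular points of $\mathcal{X}$. For a sequence $t_j\to t_0$ in $\Delta_r$, I would split the integral defining $g(t_j)$ into a piece near $\mathcal{X}^{sing}\cup \mathcal{D}$ and its complement. On a small neighbourhood $U_\delta$ of $\mathcal{X}^{sing}\cup\mathcal{D}$, the $L^\infty$ bound on $\Phi$ combined with the KLT-integrability of $|S_t|_{h_t}^{2\beta-2}$ (uniform in $t$, since the family is ``nice'') gives an error $C(\delta)\to 0$ as $\delta\to 0$; on its complement, the integrand converges smoothly in $t$, so dominated convergence applies. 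This yields continuity of the integral, hence of $g$.

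For subharmonicity on $\Delta_r^{*}$, I would invoke the direct image positivity theorem of Berndtsson \cite{Bern}. Since $\Omega+\sqrt{-1}\p\bp\Phi\geq 0$ by construction, the potential $\Phi$ is $\Omega$-plurisubharmonic on $\mathcal{X}_r$; together with the Poincar\'e–Lelong contribution from $-\log|S|_{h}^{2}$ (plurisubharmonic with associated current $[\mathcal{D}]$), the combined weight $r(\beta)\Phi+(1-\beta)(-\log|S|_{h}^{2})$, paired with the smooth background metric giving rise to $\vol_{h_t}$, defines a singular Hermitian metric with semipositive curvature current on a suitable $\mathbb{Q}$-power of $-K_{\mathcal{X}/\Delta}$ twisted by $(1-\beta)\mathcal{D}$. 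Berndtsson's theorem then asserts that
$$t\longmapsto -\log \int_{X_t} e^{-r(\beta)\Phi_t}|S_t|_{h_t}^{2\beta-2}\vol_{h_t}$$
is subharmonic on $\Delta_r^{*}$. Since $r(\beta)>0$ in the range of $\beta$ considered, $g$ is a positive multiple of this function plus a constant, so $g$ is subharmonic on $\Delta_r^{*}$.

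Finally, a continuous function that is subharmonic on the complement of an isolated interior point is subharmonic on the whole disk: the sub-mean-value inequality at the exceptional point follows by continuity from the nearby points where it already holds. Combined with the first paragraph this yields subharmonicity of $g$ on all of $\Delta_r$. The main technical obstacle I anticipate is the correct formulation of Berndtsson's theorem in the $\mathbb{Q}$-line bundle setting with a singular central fiber and a fractional divisor weight; by restricting to $\Delta_r^{*}$, where the fibration is smooth and the pair $(\mathcal{X}_{|\Delta_r^{*}}, (1-\beta)\mathcal{D}_{|\Delta_r^{*}})$ is log smooth, we precisely side-step the singularities of $\mathcal{X}$ above $0$, and the continuity established independently in the first paragraph allows us to reinstate $t=0$ at the very end.
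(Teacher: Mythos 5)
Your proposal has the same skeleton as the paper's proof (Berndtsson positivity on $\Delta_r^{*}$ plus continuity and removability at $t=0$), but two points deserve scrutiny, one cosmetic and one a genuine gap.

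On subharmonicity: the paper does not apply Berndtsson's theorem directly to the singular weight $r(\beta)\Phi+(1-\beta)\log|S|_h^2$. Instead, over each small disc $\Delta'\subset\Delta^{*}$ it approximates this weight from above by a decreasing sequence of \emph{smooth} $\Omega$-psh weights $\Psi_i$, applies the smooth form of Berndtsson's theorem to each $g_i(t)=-\log\int_{X_t}e^{-\Psi_{i,t}}\vol_{h_t}$, and then uses monotone convergence $g_i\downarrow g$ to pass to the limit. Restricting to $\Delta_r^{*}$, as you propose, is indeed what makes this possible (the fibration there is log smooth), and the curvature computation you sketch (the constraint $\Omega+\sqrt{-1}\p\bp\Phi\geq 0$ from \eqref{eqn2-3} plus Poincar\'e--Lelong for $-\log|S|_h^2$ gives curvature $r(\beta)(\Omega+\sqrt{-1}\p\bp\Phi)+2\pi(1-\beta)[\mathcal D]\geq0$) is correct. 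Your direct invocation for a singular weight is defensible but would at least need a citation to the singular form of the theorem; the regularization step is where the paper's rigor sits.

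The genuine gap is in your continuity argument at $t=0$. You assert that on a small neighbourhood $U_\delta$ of $\mathcal X^{sing}\cup\mathcal D$ the $L^\infty$ bound on $\Phi$ together with ``KLT-integrability of $|S_t|_{h_t}^{2\beta-2}$, uniform in $t$'' produces an error $C(\delta)\to0$. Near $\mathcal D$ away from $\mathcal X^{sing}$ this is indeed a straightforward consequence of the uniform KLT condition on the pairs $(X_t,(1-\beta)D_t)$. But near $\mathcal X^{sing}$ the singular factor $|S_t|_{h_t}^{2\beta-2}$ is harmless; the issue is that the relative volume forms $\vol_{h_t}$, restricted to $X_t$, may concentrate mass as $X_t$ slides toward the singular locus of the total space. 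The fiberwise KLT condition only tells you that each $\int_{X_t}\vol_{h_t}$ is finite; it does not by itself give the uniform (in $t$) estimate $\int_{U_\delta\cap X_t}\vol_{h_t}\to0$. What actually controls this is the log-resolution analysis of $\mu^{*}\vol_{h_t}$ --- the three normal forms near $\mathcal X_0'\cup\bigcup_i E_i$ with exponents governed by the discrepancies $b_i<1$ --- worked out in Section~2.2.2 and in Li's paper. The paper itself simply cites Li's computation for continuity of $g$ at $t=0$. So your claim that $C(\delta)\to0$ is true, but it is not a formal consequence of ``KLT'' as stated; it hides essentially all the analytic content. The removable singularity argument at the end is standard and fine: a function continuous on $\Delta_r$ and subharmonic on $\Delta_r\setminus\{0\}$ is subharmonic on $\Delta_r$.
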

\begin{proof}

From the proof of Proposition \ref{prop3-14} $g(t)$ is continuous on $\Delta^*$.  Next we will prove $g$ is subharmonic on $\Delta_r^*$. First we notice it suffices to prove this for $t$ in a small disc $\Delta'$ in $\Delta^*$. We view $e^{-r(\beta)\Phi} |S_t|_{h_t}^{2\beta-2}\vol_{h_t}$ as a singular hermitian metric on $\mathcal L=K_{\mathcal X/\Delta'}^{-1}$ over $\pi^{-1}(\Delta')$ with non-negative curvature $r(\beta)(\lambda^{-1}\omega_{FS}+i\partial\bar\partial \Phi)+2\pi(1-\beta)[\mathcal{D}]$.  By well-known approximation theorem for plurisubharmic functions on K\"ahler manifolds (for example, Proposition 2.1 (2) in \cite{Bern}), for any fixed $\epsilon$, we can find a sequence of smooth hermitian metrics, written as  $e^{-\Psi_i}\vol_{h_t}$ where $\Psi_i$ is a function on $\pi^{-1}(\Delta')$ that decreasingly converges to $r(\beta)\Phi+(1-\beta)\log |S_t|^2_{h_t}$, and with $\Omega+\sqrt{-1}\p\bp\Psi_i\geq 0$. Denote by $\Psi_{i, t}$ the restriction of $\Psi_i$ on $X_t$. Now consider the direct image bundle $E$ with fibers $E_t=\Gamma(X_t, K_{\mathcal X/\Delta'}^{-1}\otimes K_{X_t})$, the canonical section $\textbf{1}$ has $L^2$ norm given by 
$$||\textbf{1}||^2_t=\int_{X_t}e^{-\Psi_{i, t}}\vol_{h_t}. $$
By Berndtsson's positivity of the direct image bundle \cite{Ber2}, $g_i(t)=-\log \int_{X_t}e^{-\Psi_{i, t}}\vol_{h_t}$ is a (smooth) subharmonic function over $\Delta'$. By construction $g_i(t)$ decreasingly converges to $g(t)$ on $\Delta'$, so it follows that $g(t)$ is also subharmonic. 

By the calculation of \cite{Li}, $g$ is continuous at $t=0$, therefore $g$ is subharmonic on the whole disk $\Delta_r$.
\end{proof}

\begin{prop} \label{prop2-18}
$f$ is continuous on $\Delta_r$. 
\end{prop}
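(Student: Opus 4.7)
The strategy is to split the problem into continuity on $\Delta_r^*$, which is standard, and continuity at $t = 0$, which is the real content. By Proposition~\ref{prop3-15}, $\Phi^\epsilon$ is locally $C^{1,\alpha}$ on $\mathcal{X}_r$ away from $\mathcal{X}^{sing}\subset X_0$. Thus for any $t_0 \in \Delta_r^*$, the restrictions $\Phi^\epsilon_t$ converge to $\Phi^\epsilon_{t_0}$ in $C^{1,\alpha}$ on $X_{t_0}$ as $t \to t_0$, and each of the integrals
\[I_i(t) := \int_{X_t} \Phi^\epsilon_t \,(\omega_t + \sqrt{-1}\p\bp \Phi^\epsilon_t)^i \wedge \omega_t^{n-i}, \qquad 0 \leq i \leq n,\]
which sum (up to the factor $-1/(n+1)!$) to $f(t)$, varies continuously at $t_0$ by weak continuity of the Monge--Amp\`ere operator on bounded $C^{1,\alpha}$ potentials.

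To handle $t = 0$, given $\eta > 0$ I would pick a small open neighborhood $U \subset \mathcal{X}_r$ of $\mathcal{X}^{sing}$ and write $K := \mathcal{X}_r \setminus U$. On $X_t \cap K$, the uniform $C^{1,\alpha}$ estimate on $\Phi^\epsilon$ gives uniform convergence of the potentials together with weak convergence of all mixed Monge--Amp\`ere currents, so that
\[\lim_{t\to 0} \int_{X_t \cap K} \Phi^\epsilon_t \,(\omega_t + \sqrt{-1}\p\bp \Phi^\epsilon_t)^i \wedge \omega_t^{n-i} = \int_{X_0 \cap K} \Phi^\epsilon_0 \,(\omega_0 + \sqrt{-1}\p\bp \Phi^\epsilon_0)^i \wedge \omega_0^{n-i}.\]
Using the uniform $L^\infty$ bound $\|\Phi^\epsilon\|_{L^\infty(\mathcal{X})} \leq C$ from Proposition~\ref{prop3-15}, the remaining contribution on $X_t \cap U$ is dominated by $C \int_{X_t \cap U} (\omega_t + \sqrt{-1}\p\bp \Phi^\epsilon_t)^i \wedge \omega_t^{n-i}$. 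Thus the proof reduces to making this fiberwise Monge--Amp\`ere mass uniformly small as $U$ shrinks to $\mathcal{X}^{sing}$.

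The main obstacle is precisely this uniform fiberwise mass control near $\mathcal{X}^{sing}$. My plan is to combine three ingredients: (i) the topological identity $\int_{X_t}(\omega_t + \sqrt{-1}\p\bp \Phi^\epsilon_t)^i \wedge \omega_t^{n-i} = n!\, V$, which uniformly bounds the total fiber mass; (ii) the fact that $\mathcal{X}^{sing}$ is a proper analytic, hence pluripolar, subvariety of $\mathcal{X}_r$, so that by Bedford--Taylor theory the Monge--Amp\`ere mass of the bounded $\Omega$-plurisubharmonic function $\Phi^\epsilon$ does not charge $\mathcal{X}^{sing}$; and (iii) a slicing/averaging argument along $\pi: \mathcal{X}_r \to \Delta_r$ using the flatness of the family, to convert the global mass vanishing $\int_{\mathcal{X}_r \cap U}(\Omega + \sqrt{-1}\p\bp \Phi^\epsilon)^{i+1} \wedge \Omega^{n-i} \to 0$ as $U \downarrow \mathcal{X}^{sing}$ into a uniform fiberwise statement. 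Granted this mass control, combining with the convergence on $K$ and letting $U \downarrow \mathcal{X}^{sing}$ yields $\lim_{t \to 0} f(t) = f(0)$, establishing continuity at the central fiber and completing the proof.
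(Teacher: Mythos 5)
Your overall structure matches the paper's: reduce to continuity at $t=0$, split each integral into a part over a compact set $K$ away from $\mathcal{X}^{sing}$ and a tail near $\mathcal{X}^{sing}$, use the uniform $L^\infty$ bound on $\Phi^\epsilon$ to dominate the tail by a Monge--Amp\`ere mass, and invoke the topological constancy of the total fiber mass together with the fact that $\mathcal{X}^{sing}$ (equivalently $X_0^{sing}$) is pluripolar, hence not charged by the Bedford--Taylor mixed Monge--Amp\`ere measure of a bounded potential. The gap is in your ingredient (iii), the ``slicing/averaging'' step. Pushing forward a global mass on $\mathcal{X}_r$ along $\pi$ and then slicing produces at best an estimate that is integrated (or holds for a.e.\ $t$) in the base direction; it does not by itself yield the pointwise-in-$t$ statement $\mu_t(X_t\cap U)\to 0$ as $U\downarrow\mathcal{X}^{sing}$ uniformly in $t$, which is what you declared the proof ``reduces to.'' Moreover the global forms $(\Omega+\sqrt{-1}\p\bp\Phi^\epsilon)^{i+1}\wedge\Omega^{n-i}$ are not the fiberwise mixed measures $(\omega_t+\sqrt{-1}\p\bp\Phi^\epsilon_t)^i\wedge\omega_t^{n-i}$ restricted to $X_t$; there are cross terms in the $dt$-direction to account for, so the slicing does not give the quantity you need even on average.

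The paper sidesteps this by never trying to prove uniform smallness of the tail mass at all. It fixes $\delta>0$, picks an exhaustion domain $E^\delta_0\subset X_0$ with $\mu_0(X_0\setminus E^\delta_0)\leq\delta$ \emph{and} (crucially, and absent from your plan) with $\mu_0(\partial E^\delta_0)=0$, extends this to a continuous family $E^\delta_t$, and then uses Bedford--Taylor weak convergence of the measures $\mu_t\to\mu_0$ on coordinate charts together with the Portmanteau-type criterion (no boundary mass) to prove $\mu_t(E^\delta_t)\to\mu_0(E^\delta_0)$ and likewise $\int_{E^\delta_t}\Phi^\epsilon_t\,d\mu_t\to\int_{E^\delta_0}\Phi^\epsilon_0\,d\mu_0$. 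It then deduces $\limsup_{t\to 0}\mu_t(X_t\setminus E^\delta_t)=n!V-\mu_0(E^\delta_0)\leq\delta$ from the topological constancy of the total mass, which is exactly the $\limsup$ bound required (a much weaker statement than uniform smallness). If you replace your ingredient (iii) with this ``boundary-mass-zero $+$ measure convergence $+$ constancy of total mass'' argument, and note that you only need a $\limsup$ estimate on the tail rather than a uniform one, your proof closes.
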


\begin{proof}
We only prove the continuity at $t=0$, and the case for $t\neq 0$ is easier. By \cite{BT2} for any $i=0, \cdots, n$, the measure $\mu_0=(\omega_0+i\p\bp\Phi_0)^i\omega_0^{n-i}$ is a regular non-pluripolar Borel measure for bounded PSH function. In particular, $\int_{X_0^{sing}}(\omega_0+i\p\bp\Phi_0)^i\omega_0^{n-i}=0$. So for any $\delta>0$, we may choose $E^\delta_0$ to be the complement of a small neighborhood  $X_0^{sing}$ so that for all $i=0, \cdots, n$
\begin{equation}\label{eqn3-5}
\int_{X_0}(\omega_0+i\p\bp \Phi_0)^i\omega_0^{n-i}-\int_{E^\delta_0}(\omega_0+i\p\bp \Phi_0)^i\omega_0^{n-i}\leq\delta,
\end{equation}
and the boundary $\partial E_0^\delta$ does not have mass under the measures $\mu_0=(\omega_0+i\p\bp\Phi_0)^i\omega_0^{n-i}$ . We then extend $E^\delta_0$ to a smooth family of open subsets $E^\delta_t$ in $X_t$. 
We claim 
\begin{equation} \label{eqn3-3}
\lim_{t\rightarrow0} \int_{E^\delta_t} \Phi_t (\omega_t+i\p\bp\Phi_t)^i\omega_t^{n-i}=\int_{E^\delta_0}\Phi_0 (\omega_0+i\p\bp\Phi_0)^i\omega_0^{n-i}\end{equation}
and
\begin{equation} \label{eqn3-4}
\lim_{t\rightarrow0} \int_{E^\delta_t}  (\omega_t+i\p\bp\Phi_t)^i\omega_t^{n-i}=\int_{E^\delta_0} (\omega_0+i\p\bp\Phi_0)^i\omega_0^{n-i}. 
\end{equation}
Given the claim for the moment, we finish the proof. Since 
\begin{eqnarray*}
&&|\int_{X_t\setminus E^\delta_t} \Phi_t (\omega_t+i\p\bp\Phi_t)^i\omega_t^{n-i}|\\
&\leq& |\Phi|_{L^\infty}\int_{X_t\setminus E^\delta_t}  (\omega_t+i\p\bp\Phi_t)^i\omega_t^{n-i}\\
&=& |\Phi|_{L^\infty} (\int_{X_t}(\omega_t+i\p\bp\Phi_t)^i\omega_t^{n-i}-\int_{E^\delta_t}  (\omega_t+i\p\bp\Phi_t)^i\omega_t^{n-i}), 
\end{eqnarray*}
we have 
$$\limsup_{t\rightarrow0} |\int_{X_t\setminus E^\delta_t} \Phi_t (\omega_t+i\p\bp\Phi_t)^i\omega_t^{n-i}|\leq 
 |\Phi|_{L^\infty}  \delta$$
It follows from (\ref{eqn3-5}), (\ref{eqn3-3}), (\ref{eqn3-4}) that 
$$\limsup_{t\rightarrow0} |f_t-f_0|\leq |\Phi|_{L^\infty}\delta+\delta$$
Let $\delta\rightarrow0$ we obtain the desired continuity.

Now we prove the claim.
By choosing a finite open cover of $E^\delta_t$ it suffices to prove these two convergence properties for $E^\delta_t$ replaced by a continuously varying family $U_t$ of open sets in $\mathbb{C}^n$ where the open sets $U_0$ could be arranged to have zero mass under the measures $(\omega_0+i\p\bp\Phi_0)^i\omega_0^{n-i}$. By \cite{BT2},  $\mu_t=(\omega_t+i\p\bp\Phi_t)^i\omega_t^{n-i}$ converges to $\mu_0=(\omega_0+i\p\bp\Phi_0)^i\omega_0^{n-i}$ as currents. In this situation, the measure $\mu_t(U_t)$ would also converge to $\mu_0(U_0)$ since the boundary does not carry mass. This proves Equation (\ref{eqn3-4}) and Equation (\ref{eqn3-3}) follows from similar argument.
\end{proof}

\begin{prop}\label{prop2-19}
$f$ is subharmonic on $\Delta_r$.
\end{prop}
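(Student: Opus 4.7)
The plan is to establish subharmonicity in two stages: first on the punctured disc $\Delta_r^\ast$ by a direct Hessian computation that exploits the homogeneous Monge--Amp\`ere equation satisfied by $\Phi^\epsilon$, and then extend across the origin using the continuity proved in Proposition \ref{prop2-18}.

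For the first stage, I will identify $-f(t)$, up to the dimensional constant $(n+1)!$, with the Aubin--Mabuchi energy of the fiberwise $\omega_t$-psh function $\Phi^\epsilon_t$. Because $\Omega=\lambda^{-1}\omega_{FS}+\sqrt{-1}\mathrm{d}t\wedge\mathrm{d}\bar t$ restricts to $\omega_t$ on each fiber (the $\mathrm{d}t\wedge\mathrm{d}\bar t$ term vanishes there), the classical Hessian formula for the relative Monge--Amp\`ere energy on a smooth family --- a direct consequence of the curvature formula for Deligne pairings (Zhang, as cited in Proposition \ref{prop2-17}; see also Berman--Boucksom) --- takes the form
\[
\sqrt{-1}\partial\bar\partial_t f(t)\;=\;\frac{1}{(n+1)!}\,\pi_\ast\!\bigl(\Omega^{n+1}-(\Omega+\sqrt{-1}\partial\bar\partial\Phi^\epsilon)^{n+1}\bigr)
\]
as $(1,1)$-currents on $\Delta_r^\ast$. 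The second term vanishes identically by the HMA~(\ref{eqn2-3}), so
\[
\sqrt{-1}\partial\bar\partial_t f(t)\;=\;\frac{1}{(n+1)!}\,\pi_\ast(\Omega^{n+1})\;\geq 0,
\]
which is a smooth, strictly positive $(1,1)$-form on $\Delta_r^\ast$ since $\Omega$ restricts to a K\"ahler form on the smooth locus $\mathcal X_r^\ast$. Hence $f$ is (in fact strictly) subharmonic on $\Delta_r^\ast$.

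To extend across the origin, I will invoke a standard removable singularity argument: by Proposition \ref{prop2-18}, $f$ is continuous on all of $\Delta_r$; for any small $\rho>0$ and any harmonic $h$ on $\Delta_\rho$ agreeing with $f$ on $\partial\Delta_\rho$, the auxiliary function $v_\eta(z):=f(z)-h(z)+\eta\log(|z|/\rho)$ is subharmonic on $\Delta_\rho^\ast$, vanishes on $\partial\Delta_\rho$, and tends to $-\infty$ at $0$; the maximum principle on shrinking annuli combined with sending $\eta\to 0^+$ forces $f\leq h$ on $\Delta_\rho^\ast$, and continuity at $0$ upgrades this to all of $\Delta_\rho$, giving the sub-mean value inequality.

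The main technical obstacle will be justifying the Hessian formula for the generalized solution $\Phi^\epsilon$, which is only locally $C^{1,\alpha}$ away from the singular locus of $\mathcal X$. I plan to handle this by approximating $\Phi^\epsilon$ from above by smooth $\Omega$-psh functions on a log resolution (paralleling the approximation used for $g$ in the proof of Proposition \ref{prop2-17}, exploiting the construction of $\Phi^\epsilon$ via a log resolution in Proposition \ref{prop3-15}), verifying the formula in the smooth regime, and passing to the limit using continuity of the complex Monge--Amp\`ere operator along bounded decreasing sequences of psh functions (Bedford--Taylor). The vanishing of the relative Monge--Amp\`ere mass $(\Omega+\sqrt{-1}\partial\bar\partial\Phi^\epsilon)^{n+1}$, which is the essential input, is stable under this limit since $\Phi^\epsilon$ is by construction the generalized HMA solution.
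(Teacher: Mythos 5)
Your proposal is correct and follows essentially the same route as the paper: both use the Deligne-pairing/Hessian formula on $\Delta_r^\ast$ together with the vanishing of $(\Omega+\sqrt{-1}\partial\bar\partial\Phi^\epsilon)^{n+1}$ from the HMA equation, justify it for the $C^{1,\alpha}$ solution by approximating with a decreasing sequence of smooth $\Omega$-psh functions and passing to the limit via Bedford--Taylor, and extend subharmonicity across $t=0$ using the continuity established in Proposition \ref{prop2-18}. The only cosmetic difference is that you spell out the standard removable-singularity argument (the $\eta\log|z|$ barrier), while the paper invokes it implicitly.
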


\begin{proof}
The strategy of showing the subharmonicity of $f$ is similar to that of $g$ in Proposition \ref{prop2-17}, on $\pi^{-1}(\Delta')$ for each small disc $\Delta'\subset \Delta^*$ and fixed $\epsilon$, we may approximate $\Phi$ by a decreasing sequence of smooth functions $\Phi_i$ with $\Omega+\sqrt{-1}\p\bp\Phi_i>0$. Denote by $\Phi_{i, t}$ the restriction of $\Phi_i$ on $X_t$. By the continuity of $\Phi$ and Dini's theorem,  as $i\rightarrow\infty$, $\Phi_{i,t}$ converges  to $\Phi_t$ uniformly on $\pi^{-1}(\Delta')$. It then follows by an easy integration by part argument that $f_i(t)=F_{\omega_t}^0(\Phi_{i,t})$ converges uniformly to $f(t)$ on $\Delta'$. Then the subharmonicity of $f$ would then follow from the subharmonicity of $F_{\omega_t}^0(\Phi_{i, t})$. The latter is well-known, and we recall briefly. The  line bundle $\mathcal{L}=K_{\mathcal X/\Delta^*}^{-1}$ over $\mathcal{X}$ gives rise to a line bundle  $L=<\mathcal{L}, \cdots, \mathcal{L}>$ over $\Delta^*$, called the \emph{Deligne pairing} \cite{De}, \cite{Zhang},\cite{Ber2}, depending multi-linearly on the $n+1$ components.  If $e^{-\psi}$ is a Hermitian metric on $\mathcal{L}$, then there is a natural Hermitian metric $e^{-\psi_D}$ on $L$ with curvature:
 
 $$\sqrt{-1}\partial\bar\partial \psi_D=\pi_*(\sqrt{-1}\partial\bar\partial \psi)^{n+1}=\int_{X_t} (\sqrt{-1}\partial\bar\partial \psi)^{n+1}$$
 
  If $e^{-\psi'}=e^{-\psi-\phi}$ is another Hermitian metric on $\mathcal{L}$, then the \emph{change of metric formula} on $L$ is given by:
 
 $$\psi_D'-\psi_D=\frac{1}{(n+1)!}\sum_{k=0}^n \int_{X_t} \phi(\sqrt{-1}\partial\bar\partial \psi')^{n-k}\wedge(\sqrt{-1}\partial\bar\partial \psi)^k$$
 
\noindent  In our setting, 
 $$F_{\omega_t}^0(\Phi^{\epsilon}_{i, t})
 =-\frac{1}{(n+1)!}\sum_{k=0}^n \int_{X_t} \Phi_{i, t}(\omega_t
 +\sqrt{-1}\partial\bar\partial \Phi_{i, t})^{n-k}\wedge \omega_t^k$$
 
\noindent precisely gives the ``change of metric'' on the \emph{Deligne Pairing} $L$ resulted from changing the Hermitian metrics from $h_{\Omega}e^{-\Phi_i}$ to $h_\Omega$ on $\mathcal{L}$, where $h_\Omega$ is the hermitian metric with curvature $\Omega$.
Therefore,

$$\sqrt{-1}\p\bp f_i=\int_{ X_t}\Omega^{n+1}- (\Omega+\sqrt{-1}\p\bp\Phi_i)^{n+1}$$

\noindent and for any smooth nonnegative function $\chi$ supported on $\Delta'$,

\begin{align*}
\int_{\Delta'} f\sqrt{-1}\p\bp \chi &=\lim_{i\to \infty}\int_{\Delta'} f_i\sqrt{-1}\p\bp \chi \\
&=\lim_{i\to \infty} \int_{\Delta'} \chi \int_{ X_t} -(\Omega+\sqrt{-1}\p\bp\Phi_i)^{n+1}+\Omega^{n+1}\\
&=\lim_{i\to \infty}\int_{\pi^{-1}(\Delta')}\pi^*\chi \cdot\{\Omega^{n+1}-(\Omega+i\p\bp \Phi^{\epsilon}_{i})^{n+1}\}\\
&=\int_{\pi^{-1}(\Delta')} \pi^*\chi \cdot \Omega^{n+1}\geq 0
\end{align*}

\noindent where in the last equality we use the fact that $(\Omega+\sqrt{-1}\p\bp \Phi_{i})^{n+1}$ converges to $(\Omega+\sqrt{-1}\p\bp \Phi)^{n+1}$ as a $(n+1,n+1)$ current on $\pi^{-1}(\Delta')$ by the monotone convergence theorem (see Theorem 2.1, \cite{BT1}). Therefore $f$ is subharmonic on $\Delta_r$ since it is continuous at $t=0$.
\end{proof}

From Propositions \ref{prop2-17}, \ref{prop2-18}, \ref{prop2-19} we see that $F_{\omega_t}(\Phi_t^\epsilon)$ is a continuous subharmonic function on $\Delta_r$, so by \emph{maximum principle} 
$$\sup_{\Delta_r} F_{\omega_t,(1-\beta)D_t}(\Phi_t^\epsilon)\geq F_{\omega_0,(1-\beta)D_0}(\Phi_0^\epsilon)\geq F_{0,\beta}. $$
Let $\epsilon\rightarrow0$, using Proposition \ref{prop3-14} we get 
$$\sup_{\Delta_r} F_{t,\beta}\geq F_{0,\beta}. $$
Now let $r\rightarrow 0$, we get 
$$\limsup_{t\rightarrow0} F_{t,\beta}\geq F_{0,\beta}. $$ 
This proves the statement about the log-Ding-functional in Proposition  \ref{prop2-10}.


\subsubsection{Lower bound of log-Mabuchi-functional}

Now we prove the statement about log-Mabuchi-functional in Proposition \ref{prop2-10}. By Proposition \ref{logDM},  it suffices to prove the following

\begin{prop} \label{prop2-20}
The function $\int_{X_t} (h_{\omega_t}-\log |S_t|_{h_t}^{2-2\beta})\omega_t^n$ is uniformly bounded below as  $t\rightarrow 0$.
\end{prop}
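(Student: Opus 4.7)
The plan is to reduce the claim to a lower bound on just the Ricci-potential piece of the integrand, and then to establish continuity of that piece as a function of $t\in\Delta$ by combining smooth convergence on the regular locus with a uniform integrability estimate near $X_0^{sing}$.

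Normalizing the Hermitian metric so that $|S_t|_{h_t}^2\leq 1$, the integrand splits pointwise as
$H_{\omega_t,(1-\beta)D_t}=h_{\omega_t}+(1-\beta)\bigl(-\log|S_t|_{h_t}^2\bigr)$,
and since $\beta<1$ the second summand is non-negative. Consequently $\int_{X_t}H_{\omega_t,(1-\beta)D_t}\omega_t^n\geq\int_{X_t}h_{\omega_t}\omega_t^n$, and it suffices to prove that $t\mapsto\int_{X_t}h_{\omega_t}\omega_t^n$ is uniformly bounded below as $t\to 0$.

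My plan is to prove that $t\mapsto\int_{X_t}h_{\omega_t}\omega_t^n$ is continuous on $\Delta$. Recall that $h_{\omega_t}=\log(n!\,\vol_{h_t}/\omega_t^n)$, where $\vol_{h_t}$ is the volume form coming from the smooth ambient metric $h_t=h_{FS}^{1/\lambda}|_{X_t}$ on $-K_{X_t}$, while $\omega_t^n$ is the Fubini-Study restriction. Away from $X_0^{sing}$ both objects vary smoothly in $t$ on $\mathcal{X}$, so $h_{\omega_t}\to h_{\omega_0}$ uniformly on compact subsets of $\mathcal{X}\setminus X_0^{sing}$, and the integral over such compacts is controlled by dominated convergence. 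For a small neighborhood $U$ of $X_0^{sing}$, I would pass to a local index-one cover of the $\Q$-Gorenstein smoothing (or work on a simultaneous log resolution of $\mathcal{X}$), where $\vol_{h_t}$ becomes a smooth family of volume forms. Combined with the KLT property of the $\Q$-Fano $X_0$, this should yield a uniform-in-$t$ $L^{1+\eta}$ bound on $\vol_{h_t}/\omega_t^n$ near $X_0^{sing}$, from which uniform integrability of $h_{\omega_t}$ and hence continuity at $t=0$ follow. Passing to the limit then gives $\int_{X_t}h_{\omega_t}\omega_t^n\to\int_{X_0}h_{\omega_0}\omega_0^n$, a finite number by the $\Q$-Fano property of $X_0$.

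The main obstacle will be the uniform $L^{1+\eta}$ estimate on $\vol_{h_t}/\omega_t^n$ across the family near $X_0^{sing}$. This requires a careful local analysis of the $\Q$-Gorenstein smoothing structure, using either the local index-one cover of the singular points of $X_0$ (to reduce to a smooth family downstairs) or the discrepancy formula on a simultaneous resolution (to track the mild blow-up of $\vol_{h_t}$ coming from the KLT assumption). Once this uniform estimate is in place, putting the two regimes together yields the desired continuity, hence the required lower bound on $\int_{X_t}H_{\omega_t,(1-\beta)D_t}\omega_t^n$ as $t\to 0$.
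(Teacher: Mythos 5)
Your reduction to bounding $\int_{X_t}h_{\omega_t}\omega_t^n$ from below is exactly the paper's first step, and the key tool you identify (passing to a simultaneous log resolution $\mu:\tilde{\mathcal X}\to\mathcal X$ and reading off the behaviour of $\mu^*\vol_{h_t}$ from the relative discrepancies) is also what the paper uses. But the estimate you propose as the crux does not suffice, and it is aimed at the wrong side of the Ricci potential. You claim that a uniform $L^{1+\eta}$ bound on $f_t:=\vol_{h_t}/\omega_t^n$ (with respect to $\omega_t^n$) near $X_0^{\mathrm{sing}}$ yields uniform integrability of $h_{\omega_t}=\log f_t$ and hence the lower bound. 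An $L^{1+\eta}$ bound on $f_t$ controls $\int (\log f_t)^+\,\omega_t^n$, i.e. the region where $f_t$ is large; it gives no control whatsoever on $\int(\log f_t)^-\,\omega_t^n$, which comes from the region where $f_t\to 0$, and this is precisely what threatens the lower bound. That region is not empty in general: on the resolution, writing $\mu^*\omega_t^n=Q\,\tilde\Omega$ with $Q$ vanishing on the exceptional divisors to order $a_i'\ge 1$, and $\mu^*\vol_{h_t}/\tilde\Omega\sim\prod |w_i|^{-2b_i}$ with $b_i<1$ (but possibly very negative, i.e. large positive discrepancy), one has $\mu^*f_t\sim |w_i|^{-2b_i-a_i'}$, which tends to $0$ whenever $-2b_i<a_i'$. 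So to make your scheme work you would additionally need a uniform $L^{\epsilon}(\omega_t^n)$ bound on $1/f_t=\omega_t^n/\vol_{h_t}$ for some $\epsilon>0$, which is a separate estimate you never state.

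The paper sidesteps this issue entirely by a cleaner bookkeeping: it writes
$\int_{X_t}h_{\omega_t}\omega_t^n=\int Q\log\bigl(\mu^*\vol_{h_t}/\tilde\Omega\bigr)\tilde\Omega-\int Q\log Q\,\tilde\Omega$
on the resolution, and observes that \emph{the integrand itself} is uniformly bounded: $Q$ vanishes polynomially on the exceptional set while $\log(\mu^*\vol_{h_t}/\tilde\Omega)$ blows up only logarithmically there (in either direction, since $b_i<1$ but not bounded below), so $Q\log(\cdot)$ is bounded; and $Q\log Q$ is bounded since $Q$ is smooth. This avoids any $L^p$ estimate and handles both signs of the Ricci potential at once. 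You should also note that the local index-one cover you mention as an alternative does not reduce to a smooth family in general (the Gorenstein cover of the total space can still be singular), so the discrepancy computation on the resolution is not really optional. In summary: correct strategy and correct tools, but the $L^{1+\eta}$ estimate you single out is not the one that delivers the lower bound, and the needed complementary estimate on $1/f_t$ is missing.
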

First of all, it is easy to see that $|S_t|_{h_t}^2$ is uniformly bounded above, so $-\int_{X_t}\log |S_t|_{h_t}^{2-2\beta} \omega_t^n$ is uniformly bounded below. It suffices to show $\int_{X_t}h_{\omega_t}\omega_t^n$ is uniformly bounded below. 
So the main issue is to control the behavior of the Ricci potential $h_t$  as $t\rightarrow0$.  For this we follow \cite{DT} and \cite{Li}, and use resolution of singularities.  
Recall from before that we have fixed an embedding of $\iota: \mathcal {X}\to \mathbb P^N\times \C$ using the sections of $K_{\mathcal X/\C}^{-\lambda}$, and we have chosen smooth hermitian metric $h$ on $\O(1)$ over $\P^N\times \C$ which induces hermitian metrics $h_{\Omega}$ on $K_{\mathcal X/\C}^{-1}$ with curvature form $\lambda^{-1}i\omega_{FS}+idt\wedge d\bar t$. Restricting to each fiber $X_t$, $h_\Omega$ defines a volume form $\vol_{h_t}$ which is smooth on the smooth part of $X_t$ and varies smoothly. To be explicit, we choose a local generator  $v$ of $\mathcal O (\lambda K_{\mathcal X/\C})$ in a neighborhood of a point, then we have 
\begin{equation} \label{eqn3-6}
\vol_{h}=\sqrt{-1}^{n^2}|v_t^*|_{h_\Omega}^{2/\lambda}(v_t\wedge\bar v_t)^{1/\lambda}.
\end{equation}
 Note $|v_t^*|$ is  a local smooth non vanishing function.  By definition, we have $h_t=\log(\vol_{h_t}/\omega_t^n)$, where $\omega_t=\lambda^{-1}\iota^*\omega_{FS}$. 
\noindent Take any log resolution $\mu: \tilde{\mathcal{X}}\to \mathcal{X}$, then we have

\begin{equation*}
K_{\tilde{\mathcal{X}}/\mathbb{C}}+\mathcal{X}_0'=\mu^*(K_{\mathcal{X}/\mathbb{C}}+\mathcal{X}_0)-\sum_i b_iE_i
\end{equation*}

\noindent where $E_i's$ are exceptional divisors and $b_i$ is the log discrepancy of $E_i$, and $\mathcal{X}_0'\cup \cup_i E_i$ have simple normal crossings.
By the adjunction Formula

\begin{equation*}
K_{\mathcal{X}_0'}=(\mu|_{\mathcal{X}_0'})^*K_{\mathcal{X}_0}-\sum_i b_i E_i|_{\mathcal{X}_0'}
\end{equation*}

\noindent Since $\mathcal{X}_0$ is KLT, $b_i<1$ (This also follows from the well-known \emph{Inversion of Adjunction} \cite{K}).\\

Suppose 
\begin{equation} \label{eqn3-7}
\mathcal{X}_0'=\mu^*\mathcal{X}_0-\sum_i a_i E_i
\end{equation}

\noindent for some positive integers $a_i$, then we have
\begin{equation} \label{eqn3-8}
 K_{\tilde{\mathcal{X}}/\mathbb{C}}=\mu^*K_{{\mathcal{X}/\mathbb{C}}}+\sum_i (a_i-b_i) E_i.
\end{equation}
From this equation and the definition of relative canonical line bundle one can calculate, as in \cite{Li}, the behavior of $\mu^*\vol_{h_t}$ in a neighborhood of a point $\tilde x\in \mathcal X_0'\cup\bigcup_i E_i$. There are three cases
\begin{enumerate}
\item $\tilde x\in  \mathcal{X}_0'$ but not on any of the $E_i$'s. In this case as $t\rightarrow 0$, $\mu^*\vol_{h_t}$ converges smoothly to the volume form $\mu^*\vol_{h_0}$ in a neighborhood of $x$. 
\item $\tilde x$ is in the intersection of $\mathcal X_0'$ with exactly $m$ exceptional divisors, say $E_1, \cdots E_m$. Then clearly $m\leq n$. We may choose local holomorphic coordinates $w_0, \cdots, w_n$ around $\tilde x$ so that $\mathcal X_0'$ is defined by $w_0=0$ and $E_i$ is defined by $w_i=0$ for $i=1, \cdots, m$. (\ref{eqn3-7}) means that we can assume $t=w_0\Pi_{i=1}^m w_i^{a_i}$.  Then by (\ref{eqn3-6}), (\ref{eqn3-8}) we have 
\begin{eqnarray*}
\mu^*\vol_{h_t}&=& P(w)|g(w)|^2 \iota_{\p_t\otimes \bar \p_t}(\Pi_{i=1}^m |w_i|^{2a_i-2b_i} dw_0\wedge d\bar w_0 \cdots dw_n\wedge d\bar w_n )\\
&=&
P(w)|g(w)|^2\Pi_{i=1}^m(|w_i|^{-2b_i}dw_i\wedge d\bar{w}_i)
\wedge \Pi_{j=m+1}^n (dw_j\wedge d\bar{w}_j), 
\end{eqnarray*}
where $P$ is a non-zero smooth function and $g$ is a non-vanishing holomorphic function. 
\item $\tilde x\notin \mathcal X_0'$ but $\tilde x$ is in the intersection of exactly $m$ exceptional divisors, say $E_1, \cdots, E_m$. We choose local holomorphic coordinates $w_1, \cdots, w_{n+1}$ around $\tilde x$ so that $E_i$ is defined by $w_i=0$ for $i=1, \cdots, m$ and $t=\Pi_{i=1}^m w_i^{a_i}$. 
\begin{eqnarray*}
\mu^*\vol_{h_t}&=&P(w)|g(w)|^2 \iota_{\p_t\otimes \bar \p_t}(\Pi_{i=1}^m |w_i|^{2a_i-2b_i} dw_1\wedge d\bar w_1 \cdots dw_{n+1}\wedge d\bar w_{n+1} )\\
\\&=& P(w)|g(w)|^2 \Pi_{i=1}^m|w_i|^{2\beta_i} dw_2\wedge d \bar w_2\cdots\wedge dw_{n+1}\wedge d\bar { w}_{n+1}, 
\end{eqnarray*}
where $P$ and $g$ are as before, and $\beta_i$ are numbers that can be calculated explicitly in terms of $a_i$ and $b_i$ (see \cite{Li}), but we do not need this for our purpose here. 
\end{enumerate}

We also have a good understanding of the behavior of  $\mu^*\omega_t^n$. Let $\tilde\Omega$ be a smooth K\"ahler metric on $\mathcal X$, and write $\mu^*\omega_{FS}^n=Q\tilde\Omega$.

\begin{lem}
$Q$ is a smooth function that vanishes on $\cup _{i}E_i$. In particular near each point $\tilde x \in \cap_{i=1}^m E_i$, we have $Q(w_1,\cdots, w_{n+1})=O(|w_1\cdots w_m|)$, where $w_1, \cdots, w_{n+1}$ are local holomorphic coordinates and $E_i=\{w_i=0\}, i=1, \cdots, m$ are the first $m$ coordinate planes.
\end{lem}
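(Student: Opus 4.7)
The statement is local on $\tilde{\mathcal{X}}$, so I would work in the given local chart $(w_1,\dots,w_{n+1})$ around a point $\tilde x\in \bigcap_{i=1}^m E_i$, with local holomorphic coordinates $(z_1,\dots,z_{n+1})$ on a smooth ambient space containing $\mu(\tilde x)$, and write $\mu(w)=(\mu_1(w),\dots,\mu_{n+1}(w))$. Since $\omega_{FS}$ is the restriction of a smooth $(1,1)$-form on the ambient space, we may write $\omega_{FS}=\sqrt{-1}\,g_{jk}(z)\,dz_j\wedge d\bar z_k$ with $(g_{jk})$ smooth and positive semi-definite; then $\mu^*\omega_{FS}$ is smooth on the chart and a direct calculation gives
\[
(\mu^*\omega_{FS})^{n+1}=(n+1)!\,\det(g_{jk}(\mu(w)))\,|J(w)|^{2}\,\bigwedge_{a=1}^{n+1}\sqrt{-1}\,dw_a\wedge d\bar w_a,
\]
where $J(w)=\det(\partial\mu_j/\partial w_a)$ is the holomorphic Jacobian of $\mu$. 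Comparing with the analogous expression for $\tilde\Omega^{n+1}$ shows that the function $Q$ in the lemma (encoding the ratio of $\mu^*\omega_{FS}^{n}$ and $\tilde\Omega^{n}$) agrees, up to a smooth nowhere-vanishing factor, with $|J(w)|^{2}$.

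The main point is therefore to bound $J$. Each $E_i$ is exceptional for the birational morphism $\mu$, so it is contracted onto a subvariety $\mu(E_i)\subset \mathcal{X}^{sing}$ of dimension strictly less than $n$. Consequently, at a generic point $p\in E_i$ the fibre $\mu^{-1}(\mu(p))$ has positive dimension, which forces $d\mu_p$ to have a non-trivial kernel, and hence $J=\det d\mu$ vanishes identically along $E_i$. Because the $E_i$ form a simple normal crossing divisor with $E_i=\{w_i=0\}$ in the chosen chart, standard facts about holomorphic functions yield a factorisation
\[
J(w)=w_1^{\alpha_1}\cdots w_m^{\alpha_m}\,\tilde J(w),\qquad \alpha_i\geq 1,
\]
with $\tilde J$ holomorphic on a neighbourhood of $\tilde x$.

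Combining these two steps, on a small polydisc about $\tilde x$ in which $|w_i|\leq 1$ one has $|Q(w)|\leq C|J(w)|^{2}\leq C'|w_1\cdots w_m|^{2}\leq C'|w_1\cdots w_m|$, which is the claimed bound and in particular shows $Q$ vanishes on $\bigcup_iE_i$. Smoothness of $Q$ on $\tilde{\mathcal{X}}$ is inherited from the smoothness of $\mu^*\omega_{FS}$ and of $\tilde\Omega$. I do not expect any serious obstacle in this argument: the only substantive input is the rank-drop of $d\mu$ along each exceptional divisor, a standard consequence of $E_i$ being contracted by a birational morphism, and the passage from this vanishing to divisibility of the single scalar $J$ by $w_1\cdots w_m$ is immediate from the SNC structure of $\bigcup_iE_i$.
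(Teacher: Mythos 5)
Your argument has a gap that is substantive rather than cosmetic, and it is located in the step identifying $Q$ with $|J|^2$. The function $Q$ compares $\mu^*\omega_{FS}^n$ with $\tilde\Omega^n$ as volume forms on the $n$-dimensional fibres, i.e.\ it records the degeneration of the $(1,1)$-form $\mu^*\omega_{FS}$ at the level of $n$-fold wedge products; your Jacobian factor $|J|^2$ instead comes from the \emph{top} wedge power $(\mu^*\omega_{FS})^{n+1}$ on the $(n+1)$-dimensional total space. These two ratios are genuinely different, and the claim that they agree up to a smooth non-vanishing factor is false. Concretely, take the blow-up chart $\mu(w)=(w_1,\,w_1w_2,\,w_3,\dots,w_{n+1})$ of $\C^{n+1}$ along the $(n-1)$-dimensional centre $\{z_1=z_2=0\}$: one has $J=w_1$, which vanishes on $E=\{w_1=0\}$, yet the Hermitian matrix of $\mu^*\bigl(\sqrt{-1}\sum_j dz_j\wedge d\bar z_j\bigr)$ at $w_1=0$ is $\mathrm{diag}(1+|w_2|^2,\,0,\,1,\dots,1)$, of rank $n$, so its $n$-th power does \emph{not} vanish on $E$. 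Thus $Q$ need not vanish where $J$ does, and an argument that would apply to this example cannot be right. (A secondary but real issue: $\mathcal X$ is singular at $\mu(\tilde x)$ for $\tilde x$ on an exceptional divisor, so there is no chart of $n+1$ holomorphic coordinates on the target there; in any genuinely smooth ambient $d\mu$ is a non-square matrix, and the "determinant" $J$ you factor is undefined at precisely the points you need.)

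The counterexample also isolates the geometric input your proposal never invokes: the paper's proof rests on the fact that each $E_i$ is contracted by $\mu$ to a subvariety of $X_0$ of dimension at most $n-2$ (codimension $\geq 2$ in $X_0$, coming from normality of $X_0$), and not merely on $\mu$ having positive-dimensional fibres along $E_i$. Positive-dimensional fibres give $\dim\mu(E_i)\leq n$, which is enough to make $J$ vanish but, as the example shows, not enough to make $(\mu^*\omega_{FS})^n$ vanish. The correct mechanism is short pointwise linear algebra: at $\tilde x\in E_i$, the restriction of $\mu^*\omega_{FS}$ to the hyperplane $T_{\tilde x}E_i\subset T_{\tilde x}\tilde{\mathcal X}$ is pulled back through $d(\mu|_{E_i})$, whose rank is at most $\dim\mu(E_i)\leq n-2$; for a positive semi-definite Hermitian form, passing from a hyperplane to the full $(n+1)$-dimensional space raises the rank by at most $1$, so $\mathrm{rank}\,\mu^*\omega_{FS}$ at $\tilde x$ is at most $n-1<n$ and $(\mu^*\omega_{FS})^n=0$ there. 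This gives $Q\equiv 0$ on $\bigcup_iE_i$, and the estimate $Q=O(|w_1\cdots w_m|)$ then follows from smoothness of $Q$ together with the simple normal crossing structure. I would encourage you to rework the proof around this rank-drop argument and to make explicit where the codimension-two bound on $\mu(E_i)$ enters.
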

\begin{proof}
This follows from  the fact that $E_i$'s are exceptional divisors that are mapped to a subvariety of $X_0\subset \P^N$ of dimension at most $n-2$. 
\end{proof}

Now we write 
\begin{eqnarray*}
\int_{X_t} h_{\omega_t}\omega_t^n&=&
\int_{X_t}\log \frac{\vol_{h_t}}{\omega_t^n}\omega_t^n\\
&=&
\int_{\mathcal X_t}Q\log \frac{ \mu^*\vol_{h_t}}{\tilde \Omega}\tilde\Omega-\int_{\mathcal X_t}Q \log Q\tilde\Omega
\end{eqnarray*}

The second is uniformly bounded since $Q$ is smooth, and for the first integral, we notice that the integrand is uniformly bounded by the above discussion. In sum this proves Proposition \ref{prop2-20}.

\subsection{Finish of the proof of Theorem \ref{thm1-3}}

By  Corollary \ref{cor3-9} and Proposition \ref{prop2-10} we can find $C>0$, and a sequence $t_r\rightarrow0$, so that the log-Mabuchi-functional $M_{\omega_{t_r}, (1-\underline{\beta})D_{t_r}}(\phi)\geq -C$ for $\phi\in  C^{1,1}(X_{t_r})$. \\
 
 We also fix a $\hat\beta \in (0, 1-\lambda^{-1})$, let $V_t=\int_{X_t} |S_t|_{h_t}^{2\beta-2}\text{vol}_{h_t}$, Proposition \ref{prop2-19} shows that $V_t$ is a continuous function of $t\in \Delta_r$. Together with Proposition \ref{prop2-20} and the elementary inequality $x\log x\geq -e^{-1}$,  for any $\phi\in  C^{1,1}(X)$ and $t\in \Delta$, we have

\begin{align*}
M_{\omega_t, (1-\hat\beta)D_t}(\phi)
=&\int_{X_t} \log \frac{\omega_\phi^n}{e^{H_{\omega_t, (1-\beta)D_t}}\omega_t^n}\frac{\omega_\phi^n}{n!}-r(\hat\beta)(I-J)_{\omega_t}(\phi)+\int_{X_t} H_{\omega_t, (1-\beta)D_t}\frac{\omega_t^n}{n!}\\
&\quad\quad+\int_{X_t}H_{\omega_t, (1-\beta)D_t}\frac{\omega_t^n}{n!}\\
\geq& -\int_{X_t}e^{-1}e^{H_{\omega_t, (1-\beta)D_t}}\frac{\omega_t^n}{n!}-r(\hat\beta)(I-J)_{\omega_t}(\phi)+\int_{X_t}H_{\omega_t, (1-\beta)D_t}\frac{\omega_t^n}{n!}\\
\geq& -r(\hat\beta)(I-J)_{\omega_t}(\phi)-e^{-1}V_t-C\\
\geq& -r(\hat\beta)(I-J)_{\omega_t}(\phi)-C
\end{align*}

As observed in \cite{LS}, the log-Mabuchi-functional is linear in $\beta$, so by Corollary \ref{cor3-9} and Proposition \ref{prop2-10} there exist a constant $C>0$ independent of $r$ such that for all $\beta\in [\hat\beta, \underline{\beta})$,  we have
  
 \begin{equation} \label{proper}
{M}_{\omega_{t_r}, (1-\beta)D_{t_r}}(\phi)
 \geq \delta_\beta(I-J)_{\omega_{t_r}}(\phi)-C, 
 \end{equation}    
 where $\delta_\beta=-r(\hat\beta)\frac{\underline{\beta}-\beta}{\underline{\beta}-\hat\beta}$. 
 Since the K\"ahler-Einstein metric $\omega_{t,\beta}$ achieves the minimum of the log-Mabuchi-functional \cite{BBEGZ}, we have
$$0={M}_{\omega_{t_r}, (1-\beta)D_{t_r}}(\omega_{t_r})\geq {M}_{\omega_{t_r}, (1-\beta)D_{t_r}}(\omega_{t_r,\beta})\geq \delta_\beta(I-J)_{\omega_{t_r}}(\omega_{t_r,\beta})-C$$
So 

\begin{equation}\label{eqn2-12}
I_{\omega_{t_r}}(\phi_{t,\beta})\leq (n+1)(I-J)_{\omega_{t_r}}(\omega_{t_r,\beta}) \leq C\delta_\beta^{-1}
\end{equation}

Now we fix $\epsilon>0$ small,  then for any $\beta\in [1-\lambda^{-1}+\epsilon, \underline{\beta}-\epsilon]$, by \cite{CDS1} for any $t\in \Delta^*$ we could approximate $\omega_{t_r, \beta}$ by smooth K\"ahler metrics $\omega_{t,\beta}^\epsilon$ which satisfies Equation \ref{eqn2-4}.  Then by Lemma \ref{lem2-12}:

$$||\phi_{t_r,\beta}||_{L^\infty}\leq C_\epsilon$$
for constant $C_\epsilon>0$ independent of $r\in (0,1]$ (however may not be uniformly bounded when $\epsilon\rightarrow 0$).

To finish the proof of Theorem \ref{thm1-3}, we need to pass from the sequence $t_r, r\in (0,1]$ to all $t \in \Delta^*$. Lemma \ref{lem2-12} implies that we could take a subsequential limit $\tilde\phi_{\infty,\beta}$ as $r\to 0$ which satisfies:

$$(\omega_0+\sqrt{-1}\partial\bar\partial \tilde\phi_{\infty, \beta})^n=e^{-r(\beta)\tilde\phi_{\infty, \beta}}|S_0|_{h_0}^{2\beta-2}\vol_{h_0}.$$

\noindent This equation implies that $\tilde\omega_{\infty, \beta}=\omega_0+\sqrt{-1}\partial \bar\partial \tilde\phi_{\infty, \beta}$ is a \emph{weak conical K\"ahler-Einstein metric} on the log Fano pair $(X_0,(1-\beta)D_0)$. Since $Aut(X_0, D_0)$ is discrete, the uniqueness theorem of \cite{BBEGZ} implies that $\tilde\omega_{\infty,\beta}$ is the unique \emph{weak conical K\"ahler-Einstein metric} $\omega_{t,\beta}$ on $(X_0, (1-\beta)D_0)$, and the whole family $\omega_{t_r,\beta}$  converge to it in the same sense. Then Theorem \ref{thm1-3} is finished by the next proposition, with the $\underline{\beta}$ replaced by $\underline{\beta}-\epsilon$ in the following proposition. 

\begin{prop}\label{prop3-24}
 Under the same assumption as Proposition \ref{prop2-10}, for any $\beta\in [1-\lambda^{-1}+\epsilon, \underline{\beta}-\epsilon]$,
  $$\limsup_{\delta\to 0} \max_{|t|=\delta} I_{\omega_t}( \omega_{t,\beta})<\infty.$$ In particular,
  $$||\phi_{t,\beta}||_{L^\infty}<C_\epsilon$$
 for a constant $C$ independent of $t\in \Delta^*$.
 \end{prop}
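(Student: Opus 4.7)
My plan is to prove Proposition~\ref{prop3-24} by contradiction, leveraging the uniqueness of the weak conical K\"ahler-Einstein metric on the central fiber. Recall that the preceding discussion established, along the distinguished sequence $t_r\to 0$, that $\phi_{t_r,\beta}$ is uniformly bounded in $L^\infty$ and converges (along a subsequence, hence by uniqueness as a whole family) to $\phi_{0,\beta}$, the unique weak conical K\"ahler-Einstein potential on $(X_0,(1-\beta)D_0)$; uniqueness follows from Remark~\ref{rmk2-7} giving $\mathrm{Aut}_0(X_0,D_0)=\{1\}$ together with the generalized Bando-Mabuchi theorem of \cite{BBEGZ}.

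Suppose toward contradiction that there exists a sequence $s_k\to 0$ in $\Delta^*$ with $I_{\omega_{s_k}}(\omega_{s_k,\beta})\to\infty$. The uniform lower bound on the generalized alpha-invariants of the pairs $(X_t,(1-\beta)D_t)$ established via Propositions~\ref{prop2-4} and~\ref{central fiber solvable}, combined with the fixed total-mass constraint coming from~(\ref{eqn2-2}), implies that after a uniformly bounded additive normalization the $\phi_{s_k,\beta}$ have uniformly bounded supremum (otherwise, rescaling the equation by $e^{-r(\beta)\sup\phi_{s_k,\beta}}$ contradicts the uniform control on $\int e^{-r(\beta)\tilde\phi}|S_t|_{h_t}^{2\beta-2}\vol_{h_t}$). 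Using the fixed ambient embedding $\iota:\mathcal{X}\hookrightarrow\mathbb{P}^N\times\mathbb{C}$ to work in a common projective space together with the standard $L^1$-compactness for normalized $\omega$-plurisubharmonic functions, we extract a subsequential limit $\phi_\infty\in PSH(X_0,\omega_0)$ in the $L^1$-topology.

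The main technical hurdle is to identify $\phi_\infty$ as a weak conical K\"ahler-Einstein potential on $(X_0,(1-\beta)D_0)$ by passing to the limit in the Monge-Amp\`ere Equation~(\ref{eqn2-2}); this uses Bedford-Taylor continuity of the non-pluripolar Monge-Amp\`ere measure (as invoked in Proposition~\ref{prop2-18}), the smooth $t$-dependence of the singular volume form $|S_t|_{h_t}^{2\beta-2}\vol_{h_t}$, and careful treatment of the cone term across varying complex structures with only $L^1$-control on the potentials. Once this is established, uniqueness forces $\phi_\infty=\phi_{0,\beta}$, which is bounded in $L^\infty$; combining this $L^1$-convergence with Ko\l odziej-type stability for the complex Monge-Amp\`ere equation upgrades it to an $L^\infty$-bound on $\phi_{s_k,\beta}$, which via Lemma~\ref{lem2-11} contradicts $I_{\omega_{s_k}}(\omega_{s_k,\beta})\to\infty$. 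The final claim $\|\phi_{t,\beta}\|_{L^\infty}<C_\epsilon$ for all $t\in\Delta^*$ then follows by combining the near-zero bound just obtained with Lemma~\ref{lem2-14}, which controls compact subsets of $\Delta^*$ bounded away from the origin.
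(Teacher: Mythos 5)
Your strategy diverges from the paper's in a way that leaves a genuine gap.

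The paper never lets the $I$-functional become unbounded. Using that $I$ is a continuous function of $t$ on each circle $|t|=\delta$, and that (from inequality~(\ref{eqn2-12})) there exist points $t_\delta$ on each circle with $I_{\omega_{t_\delta}}(\phi_{t_\delta,\beta})\leq C$, the paper applies the intermediate value theorem: if $\max_{|t|=\epsilon_j}I$ were unbounded, one could pick $s_j$ with $|s_j|=\epsilon_j$ and $I_{\omega_{s_j}}(\omega_{s_j,\beta})$ \emph{exactly equal to} $C+1$. Because $I$ is now held at a fixed finite level, Lemma~\ref{lem2-12} applies directly, giving uniform $C^k$ convergence away from the singular set, and by uniqueness both $\omega_{s_j,\beta}$ and $\omega_{t_{\epsilon_j},\beta}$ converge to the same $\omega_{0,\beta}$, forcing $C+1=I_{\omega_0}(\omega_{0,\beta})\leq C$, a contradiction. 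The crux is the normalization trick that pins $I$ to a bounded value before passing to the limit.

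Your proposal instead attempts to extract a meaningful limit from a sequence with $I_{\omega_{s_k}}(\omega_{s_k,\beta})\to\infty$, and this runs into two concrete obstacles. First, the Ko\l odziej stability you invoke to upgrade $L^1$-convergence to an $L^\infty$-bound requires the Monge--Amp\`ere density to be bounded in some $L^p$ with $p>1$; here the density is $e^{-r(\beta)\phi_{s_k,\beta}}|S_{s_k}|^{2\beta-2}$, which involves the unknown $\phi_{s_k,\beta}$ itself. The alpha-invariant lower bound from Propositions~\ref{prop2-4}--\ref{central fiber solvable} only gives $\alpha>\frac{n}{n+1}r(\beta)$, which is strictly weaker than $\alpha>r(\beta)$, so one cannot conclude an $L^p$, $p>1$, bound on the density when $\inf\phi_{s_k,\beta}$ is uncontrolled. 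Second, identifying the $L^1$-limit $\phi_\infty$ as a weak conical KE potential requires passing to the limit in~(\ref{eqn2-2}) across varying complex structures with only $L^1$ control, where mass of the non-pluripolar Monge--Amp\`ere measure can escape to a pluripolar set; this is not covered by the Bedford--Taylor arguments used in Proposition~\ref{prop2-18}, which rely on uniform $L^\infty$ bounds on the potentials. These are exactly the difficulties that the paper's intermediate-value normalization is designed to sidestep, and your sketch does not resolve them.
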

 
\begin{proof}
Argue by contradiction. Since we already have $I_{\omega_{t_r}}(\phi_{t_r,\beta})\leq C$ by the inequality \ref{eqn2-12}, suppose the conclusion in the proposition is not true. Then we could pick $\epsilon_j\to 0$ and $|s_j|=\epsilon_j$, such that $I_{\omega_{s_j}}(\omega_{s_j,\beta})=C+1$. The same reasoning (by Lemma \ref{lem2-12}) as above shows that $\omega_{s_j,\beta}$ converges to $\omega_{0,\beta}$. Clearly this is a contradiction since the $I$ functional converges under the limit:

$$\lim_{j\to \infty} I_{\omega_{s_j}}(\omega_{s_j,\beta})=C+1=I_{\omega_{0}}(\omega_{0,\beta})=\lim_{j\to \infty} I_{\omega_{t_{\epsilon_j}}}(\omega_{t_{\epsilon_j},\beta})\leq C$$
\end{proof}

\section{Gromov-Hausdorff convergence under $L^\infty$ bound on K\"ahler potentials}\label{section3}

In this section we study the behavior of (conical) KE metrics in a $\Q$-Gorenstein smoothing family, under the additional hypothesis of a uniform estimate on the KE K\"ahler potentials (with respect to some Fubini-Study background metric). In particular, we show that the Gromov-Hausdorff limit as $t\rightarrow 0$ is unique and equal to the weak (conical) KE metric on the central fiber.

As we explained, we can reduce to the following setting: we may assume that the $\Q$-Gorenstein smoothing $\pi:\mathcal{X}\rightarrow \Delta$ of a $\Q$-Fano variety is $\lambda$-plurianticanonically embedded in $\X\subset \C\P^{N(\lambda)}\times\C$.  Moreover assume we have a relative $\lambda$-plurianticanonical divisor $\mathcal{D}$ that is smooth over $\Delta^*=\Delta\setminus\{0\}$ and so that $(X_0, (1-\beta)D_0)$ is KLT for all $\beta\in (0, 1)$.

\begin{thm} \label{thm3-1}
Let $\pi:(\mathcal{X},\mathcal{D})\rightarrow \Delta$ be a $\Q$-Gorenstein smoothing as above. Let $\beta\in (1-\lambda^{-1}, 1]$, assume that for any $t\in \Delta$, there is a weak conical K\"ahler-Einstein metric $\omega_{t, \beta}$ on $(X_t, (1-\beta)D_t)$, which is genuinely conical for $t\neq 0$ and such that $\omega_{t, \beta}=\omega_{t, FS}+\sqrt{-1}\p\bp\phi_{t,\beta}$ with $|\phi_{t,\beta}|_{L^\infty}$ uniformly bounded. 
Then the conical KE metrics on the smooth fibers converge to the weak KE metric on the central fiber in the Gromov-Hausdorff topology. Moreover, we have that $|\nabla_{\omega_{t, \beta}}\phi_{t,\beta}|$ is uniformly bounded and all higher derivatives of $\phi_{t,\beta}$ is uniformly bounded away from the singular set.
\end{thm}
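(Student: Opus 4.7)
The strategy is to obtain two-sided metric comparisons between $\omega_{t,\beta}$ and the background Fubini-Study $\omega_{t,FS}$, extract Gromov-Hausdorff subsequential limits, identify the limit with $(X_0,\omega_{0,\beta})$ by uniqueness of the weak conical KE metric, and finally upgrade to smooth convergence off $\mathcal{X}^{sing}\cup\mathcal{D}$.

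First I would apply the Chern-Lu argument of Lemma~\ref{lem1-6} to the smooth approximants $\omega^\epsilon_{t,\beta}$ of (\ref{eqn2-4}) and then let $\epsilon\to 0$, obtaining a uniform lower bound $\omega_{t,\beta}\ge C^{-1}\omega_{t,FS}$ whose constant depends only on the $L^\infty$-bound on $\phi_{t,\beta}$ and on the bisectional curvature bound inherited from the ambient projective space. On any compact set $K\subset \mathcal{X}\setminus (\mathcal{X}^{sing}\cup \mathcal{D})$ the reference volume $\vol_{h_t}/\omega_t^n$ is smooth and uniform in $t$, so the Monge-Amp\`ere equation (\ref{eqn2-2}) together with the lower comparison produces $\omega_{t,\beta}\le C_K\omega_{t,FS}$ on $K$. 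Standard $W^{2,p}$, Evans-Krylov, and Schauder bootstrapping then give $\|\phi_{t,\beta}\|_{C^k(K)}\le C_{k,K}$ for every $k$, which is precisely the higher-derivative claim of the theorem.

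With those bounds in hand, the lower metric comparison already supplies diameter control and non-collapsing at every regular point, while the positive Ricci lower bound $r(\beta)>0$ on the smooth locus places the sequence within the reach of a Donaldson-Sun-type compactness adapted to the conical setting. To identify any subsequential Gromov-Hausdorff limit $(Z,d_Z)$ with $(X_0,\omega_{0,\beta})$ I would extract a limit potential $\phi_\infty\in L^\infty(X_0)\cap PSH(X_0,\omega_0)$, smooth on $(X_0)^{reg}\setminus D_0$, via a diagonal argument using the regularity above. The Bedford-Taylor convergence of non-pluripolar products (already used in Proposition~\ref{prop2-18}) shows that $\phi_\infty$ satisfies the weak conical KE equation (\ref{eqn2-2}) on $(X_0,(1-\beta)D_0)$; since $\mathrm{Aut}_0(X_0,D_0)$ is trivial by Remark~\ref{rmk2-7}, the Bando-Mabuchi-type uniqueness of \cite{BBEGZ} forces $\phi_\infty=\phi_{0,\beta}$. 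Hence every subsequential limit is $(X_0,\omega_{0,\beta})$, and the full family converges.

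The main obstacle is the global gradient bound $|\nabla_{\omega_{t,\beta}}\phi_{t,\beta}|\le C$, which must be uniform in $t$ and valid \emph{up to} the divisor $D_t$ and near $\mathcal{X}^{sing}$. On the smooth fibers I would attack it by a maximum-principle computation on $|\nabla \phi^\epsilon_{t,\beta}|^2_{\omega^\epsilon_{t,\beta}}-A\phi^\epsilon_{t,\beta}$: the Bochner identity, together with the uniform positive Ricci lower bound $r(\beta)$ of $\omega^\epsilon_{t,\beta}$ and the Chern-Lu lower metric bound, produces after absorbing cross terms via the $L^\infty$ control a differential inequality with constants independent of $\epsilon$. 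The delicate point is ensuring that no maximum escapes to the divisor as $\epsilon\to 0$; the barrier techniques of \cite{JMR} near $D_t$ should handle this, and the extension to the central fiber then uses the smooth convergence already established on compact subsets of $(X_0)^{reg}\setminus D_0$ together with the fact that $\mathcal{X}^{sing}\cup D_0$ is pluripolar and does not carry the relevant mass.
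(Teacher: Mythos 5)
Your proposal correctly assembles several ingredients that the paper also uses (the Chern--Lu lower bound $\omega_{t,\beta}\ge C^{-1}\omega_{t,FS}$, local higher-order regularity away from $\mathcal{X}^{sing}\cup\mathcal{D}$, Donaldson--Sun compactness, and uniqueness of the weak conical KE metric), but it omits the step that actually makes the identification of the Gromov--Hausdorff limit with $(X_0,\omega_{0,\beta})$ work. The paper's proof hinges on comparing the $L^2$ inner product $H_t$ on $H^0(X_t,K_{X_t}^{-\lambda})\cong\C^{N(\lambda)+1}$ induced by $\omega_{t,\beta}$ with the fixed Hermitian metric $H$. A preliminary lemma shows that uniform equivalence $H_t\sim H$ forces the cycle limit of the Tian $L^2$-orthonormal embeddings $T(X_{t_i})$ to lie in a compact subset of the $PGL$-orbit of $X_0$, hence the GH limit is $g_\infty^\ast X_0$ for some fixed $g_\infty$, which by uniqueness of the KE metric is $(X_0,\omega_{0,\beta})$. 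Establishing $H_t\sim H$ is then done by a direct computation (upper bound) and a contradiction argument using the $L^\infty$ bound, Lemma~\ref{lem2-12}, and the very-ampleness of the embedding (lower bound). Your proposal skips all of this: ``extracting a limit potential $\phi_\infty$ on $X_0$'' gives smooth convergence on compact subsets of $X_0^{reg}\setminus D_0$, but it does not by itself rule out the possibility that the Tian embeddings diverge in $PGL(N+1,\C)$, in which case the Donaldson--Sun GH limit would be a different variety and your extracted $\phi_\infty$ would only describe a subset of the limit space. The algebraic comparison $H_t\sim H$ is precisely what closes that gap, and it is the heart of the argument.

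For the final assertions ($|\nabla_{\omega_{t,\beta}}\phi_{t,\beta}|$ uniformly bounded and smooth convergence away from the singular set), the paper does not perform any fresh maximum-principle estimate. It simply cites \cite{DS}: under the $L^2$ Tian embedding one automatically has $\omega_{t,\beta}=\omega'_{t,FS}+i\partial\bar\partial\psi_t$ with $|\nabla\psi_t|$ uniformly bounded and $\psi_t$ converging smoothly off the singular set, and the uniform boundedness of $H_t$ then transfers these bounds to the original embedding. Your alternative---a Bochner/maximum-principle computation on $|\nabla\phi^\epsilon_{t,\beta}|^2-A\phi^\epsilon_{t,\beta}$ controlled near $D_t$ by JMR barriers---is a genuinely different and substantially harder route, and the proposal does not resolve the real difficulty (the quantity blows up at the cone singularity, so barrier construction uniform in both $t$ and $\epsilon$ is far from automatic). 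Even if this could be made to work, it would be reproving, by hand and in a more delicate setting, exactly what the $H_t\sim H$ comparison delivers for free from \cite{DS}.
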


\begin{rmk}
It follows from the proof below and the theorem in \cite{DS} that the same conclusion is true if the cone angles of the conical KE metrics vary and stay bounded below by $1-\lambda^{-1}+\delta$ for some $\delta>0$ since they will have a uniform diameter upper bound.
\end{rmk} 
For the proof of the above theorem \ref{thm3-1}  we need a couple of lemmas.

\begin{lem} Let $\pi:(\mathcal{X},\mathcal{D})\rightarrow \Delta$ a family as before. Then there exists a possibly very large $k \in\N$ such that for a $k$-th Veronese re-embedding of the family in $\P^{N(\lambda k)}$ we can still assume (for a suitable rescaling) $\omega_{t, \beta}=\omega_{t, FS}+\sqrt{-1}\p\bp\phi_{t,\beta}$ with $|\phi_{t,\beta}|_{L^\infty}$ uniformly bounded. Moreover, all possible Gromov-Hausdorff limits of $(X_t,(1-\beta)D_t, \omega_{t,\beta})$
as $t\rightarrow 0$ can be realized as weak conical KE KLT pairs plurianticanonically embedded in the same $\P^{N(\lambda k)}$. 
\end{lem}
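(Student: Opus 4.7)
The plan is to re-embed via high pluri-anticanonical sections and then combine flatness of the family with the Donaldson--Sun algebraicity theorem for Gromov--Hausdorff limits.

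First, I would interpret the $k$-th Veronese re-embedding as the full relative $\lambda k$-plurianticanonical embedding $i_k:\X\hookrightarrow \P^{N(\lambda k)}\times\Delta$. For $k$ sufficiently large this exists and is smooth over $\Delta$ (after shrinking $\Delta$) because of flatness of $\pi$ and the uniform vanishing of higher cohomology of $K_{X_t}^{-\lambda k}$: the direct image $\pi_* K_{\X/\Delta}^{-\lambda k}$ is locally free of rank $N(\lambda k)+1$ on $\Delta$, and admits a local trivialization near $0\in\Delta$. The rescaled Fubini--Study metric $\omega_{t,FS,k}:=(\lambda k)^{-1}i_{t,k}^*\omega_{FS}$ again lies in $2\pi c_1(X_t)$, so the background has merely been changed within the same K\"ahler class.

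Second, I would check that the background change $\omega_{t,FS,k}-\omega_{t,FS}=\sqrt{-1}\p\bp\psi_{t,k}$ has $|\psi_{t,k}|_{L^\infty}$ uniformly bounded in $t$. Writing $\psi_{t,k}$ as the logarithm of the ratio of the two Hermitian metrics induced on $K_{X_t}^{-1}$ by the two projective embeddings (normalized for instance by $\sup \psi_{t,k}=0$), one sees $\psi_{t,k}$ is continuous on $\X^{reg}$ up to the central fiber since both $i_t$ and $i_{t,k}$ extend continuously there. As $\X^{sing}$ has codimension $\geq 2$ in $X_0$ and $\psi_{t,k}$ is quasi-plurisubharmonic and one-sidedly bounded, standard pluri-potential arguments yield a global $L^\infty$ bound uniform over $\bar\Delta_r$ for any $r<1$. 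Setting $\phi_{t,\beta,k}:=\phi_{t,\beta}-\psi_{t,k}$ then gives the conical KE potential relative to $\omega_{t,FS,k}$ with a uniform $L^\infty$ bound in $t$.

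Third, for the Gromov--Hausdorff statement I would invoke Donaldson--Sun. The conical KE metrics $\omega_{t,\beta}$ have $\mathrm{Ric}\geq r(\beta)>0$ off $D_t$, uniform diameter via Myers, and uniform non-collapsed volume from the fixed topological volume $V$; these bounds permit extracting subsequential GH limits and, via an extension to the conical setting of the Donaldson--Sun regularity theorem, identifying each limit with a weak conical KE KLT pair $(Y,(1-\beta)D_\infty)$. The \emph{partial $C^0$ estimate} then produces an integer $k_0$, depending only on dimensional and curvature bounds, such that for every $k\geq k_0$ both the $X_{t_j}$ along any convergent subsequence and the limit $Y$ are $\lambda k$-plurianticanonically embedded in the same $\P^{N(\lambda k)}$, with $X_{t_j}\to Y$ as algebraic cycles and $D_{t_j}\to D_\infty$ correspondingly. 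Choosing $k$ to exceed both this $k_0$ and the threshold from the first step completes the proof.

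The main obstacle I anticipate is the uniformity across the conical KE family of the partial $C^0$ estimate underlying Donaldson--Sun's embedding theorem. One needs its conical analogue with cone angle $2\pi\beta$ uniformly bounded below by $2\pi(1-\lambda^{-1}+\delta)$, and one must check that this uniformity persists along the smoothing $X_{t_j}\to Y$ where the limit acquires additional singularities; both points rest on now-standard but delicate refinements of the Donaldson--Sun machinery in the conical setting.
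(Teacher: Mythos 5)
Your proposal follows essentially the same route as the paper: invoke the Donaldson--Sun / Chen--Donaldson--Sun partial $C^0$ estimate in the conical setting to realize all Gromov--Hausdorff limits in a fixed $\P^{N(\lambda k)}$ via $L^2$-orthonormal $\lambda k$-plurianticanonical embeddings, and then observe that passing from the original Fubini--Study background to the re-embedded one changes the K\"ahler potential by a uniformly bounded function. One small remark: your second step is more involved than it needs to be. The function $\psi_{t,k}$ is the logarithm of the ratio of two Hermitian metrics on the genuine line bundle $K_{\X/\Delta}^{-\lambda k}$, both obtained as pullbacks of smooth metrics on $\mathcal O(1)$ under the two projective embeddings; this ratio is a continuous positive function on all of $\X$ (including $\X^{sing}$ and the central fiber), hence uniformly bounded on $\bar\Delta_r$. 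No codimension or quasi-psh extension argument is required -- this is exactly the one-line observation the paper makes.
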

\begin{proof}
By \cite{DS} and \cite{CDS2},  there is an integer $k>0$  so that all the Gromov-Hausdorff limits of the family $(X_t, (1-\beta)D_t, \omega_{t, \beta})$ as $t\rightarrow 0$ can be realized as limits of embeddings of $X_t$ into $\mathbb P^{N(\lambda k)}$ using $L^2$-orthonormal
sections of $K_{X_t}^{-\lambda k}$. It remains to show that the uniform estimate on the K\"ahler potential still holds under re-embeddings. But this follows immediately by noting that the difference of the two Fubini-Study metrics after the Veronese embeddings (and rescalings) is given by the $i\partial\bar\partial$ of a uniformly bounded function.
\end{proof}

Thus in the following we may assume $k=1$ in the above lemma. Denote by $H$ the chosen Hermitian metric on $\mathbb{C}^{N(\lambda)+1}$ given the back-ground Fubini-Study metric.  We also have a $L^2$ metric, denoted by $H_t$, on $H^0(X_t, K_{X_t}^{-\lambda})$ (which have been identifed once for all with $\C^{N(\lambda)+1}$) induced by the conical K\"ahler-Einstein metric $\omega_{t, \beta}$.  

\begin{lem} Under the previous hypothesis and notation, if the two euclidean norms $H_t$ and $H$ on $\C^{N(\lambda)+1}$ are uniformly equivalent then $(X_t,(1-\beta)D_t,\omega_{t,\beta})\rightarrow (X_0,(1-\beta)D_0,\omega_{0,\beta})$ in the Gromov-Hausdorff topology.  
\end{lem}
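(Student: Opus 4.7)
The plan is to extract a subsequential Gromov-Hausdorff limit, identify it algebraically with the flat limit $X_0$ using the uniform equivalence $H_t\sim H$, and then invoke uniqueness of weak conical KE metrics on $(X_0,(1-\beta)D_0)$ to upgrade subsequential convergence to convergence of the full family.

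First I would verify Gromov-Hausdorff pre-compactness of $\{(X_t,\omega_{t,\beta})\}$. The uniform $L^\infty$-bound on $\phi_{t,\beta}$ together with the Chern-Lu estimate of Lemma \ref{lem1-6} gives $\omega_{t,\beta}\geq C^{-1}\omega_{t,FS}$, so the volumes $\int_{X_t}\omega_{t,\beta}^n$ are uniformly bounded below (being, in fact, the topological constant $V$), ruling out collapsing. Since $\mathrm{Ric}\,\omega_{t,\beta}=r(\beta)\omega_{t,\beta}$ with $r(\beta)>0$ on the regular part, a Bishop-Gromov-type argument as in \cite{DS}, \cite{CDS2} yields a uniform diameter upper bound. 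Consequently, any sequence $t_i\to 0$ admits a subsequential limit $(Z,d_Z)$.

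Next I invoke the Donaldson-Sun theory, in the form extended to the conical setting by \cite{DS}, \cite{CDS2}, \cite{CDS3}: $Z$ is homeomorphic to a normal projective variety in $\P^{N(\lambda)}$, embedded via a basis of $H^0(Z,K_Z^{-\lambda})$ that is the limit of $H_{t_i}$-orthonormal bases of $H^0(X_{t_i},K_{X_{t_i}}^{-\lambda})$. Let $A_{t_i}\in GL(N(\lambda)+1,\C)$ be the change-of-basis matrix relating the fixed $H$-orthonormal basis of $\C^{N(\lambda)+1}$ (which defines the embedding $\iota_{t_i}$ of the flat family) to such an $H_{t_i}$-orthonormal basis. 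The hypothesis $H_{t_i}\sim H$ uniformly implies $\|A_{t_i}\|+\|A_{t_i}^{-1}\|\leq C$, so after a further subsequence $A_{t_i}\to A_\infty\in GL(N(\lambda)+1,\C)$. Flatness of $\pi:\mathcal X\to\Delta$ forces $\iota_{t_i}(X_{t_i})\to\iota_0(X_0)$ as cycles in $\P^{N(\lambda)}$, and hence the $L^2$-embedded subvarieties $A_{t_i}\cdot\iota_{t_i}(X_{t_i})$ converge to $A_\infty\cdot\iota_0(X_0)$; the same argument applied to $\mathcal D\to\Delta$ identifies the limit divisor with $A_\infty\cdot\iota_0(D_0)$. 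Thus $(Z,D_\infty)$ is isomorphic as a pair to $(X_0,D_0)$.

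Finally, the partial regularity theory of \cite{DS}, \cite{CDS2} shows that $d_Z$ is induced by a weak conical K\"ahler-Einstein metric on $(Z,(1-\beta)D_\infty)$. Under the identification just made, this becomes a weak conical KE metric on $(X_0,(1-\beta)D_0)$; by Corollary \ref{cor2-7} and Remark \ref{rmk2-7}, $\mathrm{Aut}_0(X_0,D_0)$ is trivial, so the uniqueness theorem of \cite{BBEGZ} identifies $(Z,d_Z)$ with $(X_0,\omega_{0,\beta})$. Since the limit is the same for every subsequence, the full family $(X_t,(1-\beta)D_t,\omega_{t,\beta})$ converges to $(X_0,(1-\beta)D_0,\omega_{0,\beta})$ in the Gromov-Hausdorff sense. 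The most delicate point I foresee is making the Donaldson-Sun tangent-cone/$L^2$-embedding scheme run with uniform-in-$t$ estimates in the conical setting; this uses crucially the uniform potential bound together with the uniform cone angle $\beta>1-\lambda^{-1}$, which together control the Sobolev and Poincar\'e constants for $\omega_{t,\beta}$.
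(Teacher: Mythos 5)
Your proof is correct and takes essentially the same approach as the paper: extract a subsequential Gromov--Hausdorff limit via the Donaldson--Sun $L^2$-embedding, use the uniform equivalence $H_t\sim H$ to place the projective transformations relating the flat-family embedding and the $L^2$-embedding in a compact subset of $PGL$, identify the cycle limit with $X_0$ (and $D_0$) via flatness, and conclude by uniqueness of the weak conical KE metric on $(X_0,(1-\beta)D_0)$. The only cosmetic difference is that the paper argues by contradiction whereas you phrase it as ``every subsequential limit coincides with $(X_0,\omega_{0,\beta})$,'' but the underlying content is identical.
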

\begin{proof}
With a slight abuse of notation, let $X_t$ be the fiber in the original family seen inside the projective space $\mathbb P^{N(\lambda)}$.  Assume that the statement is not true. Then we can find a sequence of points $t_i\rightarrow 0$ so that the Gromov-Hausdorff metric distance, with respect to the conical KE,  between $X_{t_i}$ and $X_0$ stays bounded away from zero. Eventually taking a subsequence, we may  assume, by the previous lemma  that  for  the re-embedding given by  $L^2$-orthonormal sections $T(X_{t_i})\rightarrow W \subseteq \mathbb P^{N(\lambda)}$ as cycles as $t_i\rightarrow 0$. By the hypothesis on  uniform control of the norms, it follows that $T(X_{t_i})=g_i^\ast X_{ t_i}$ with $(g_i)\subseteq K \subseteq  PGL(\C^{N(\lambda)+1})$, where $K$ is compact. Thus, eventually taking a subsequence again, we may assume $g_i \rightarrow g_\infty\in K$ and $W=g_\infty^\ast X_0$ and similarly for the divisors. Combined with uniqueness of the KE metric, this is clearly a contradiction.  \end{proof}
 
\begin{rmk}
 Notice that it would not be true in general that the restriction of the Fubini-Study metrics on the fibers and the KE metrics  are uniformly  equivalent, as the case of orbifold singularities in dimension two clearly shows. 
\end{rmk}

We are now ready to give the proof of Theorem \ref{thm3-1}.
\begin{proof}[Proof of theorem \ref{thm3-1}]
 Note the $L^\infty$ bound on K\"ahler potentials is equivalent to that the induced Hermitian metrics $h_{t, \beta}$ and $h_{t, FS}$ on $K_{X_t}^{-\lambda}$ are uniformly equivalent.  Given any section $S=\sum_i a_i(t) S_i$, we have 
 \begin{eqnarray*}
||S||_{H_t}^2&=&\int_{X_t} |S|_{h_{t,\beta}}^2\omega_{t,\beta}^n\leq C\int_{X_t}|S|_{h_{t,FS}}^2\omega_{t,\beta}^n\\
&=&C\int_{X_t} \frac{|\sum_i a_i z_i|^2}{|z_0|^2+|z_1|^2+\cdots+|z_{N(\lambda)}|^2}{\omega}_{t,\beta}^n\\
&\leq& C \int_{X_t}\sum_i |a_i|^2\omega_{t,\beta}^n=CV\sum_i |a_i|^2=CV||S||_{H}^2.
\end{eqnarray*}
Since the volume $V$ is independent of $t$, we see $||S||_{H_t}$ is uniformly bounded above by $||S||_H$. 

To see the converse bound, we prove by contradiction. Suppose we have a sequence $t_\alpha\rightarrow 0$ and $S^\alpha=\sum_i a_i^\alpha(t_\alpha) S_i$ with $\sum_i |a_i^\alpha|^2=1$ but with $||S^\alpha||^2_{H_t}\rightarrow 0$. 
Using the $L^\infty$ bound, we obtain 
$$\int_{X_{t_\alpha}}  \frac{|\sum_i a_i^\alpha z_i|^2}{|z_0|^2+|z_1|^2+\cdots+|z_{N(\lambda)}|^2}\omega_{t_\alpha, \beta}^n \rightarrow 0.$$
Moreover, thanks to the $L^\infty$-bound hypothesis, we can use Lemma \ref{lem2-12}, and conclude that
$$\int_{X_{t_\alpha}}  \frac{|\sum_i a_i^\alpha z_i|^2}{|z_0|^2+|z_1|^2+\cdots+|z_{N(\lambda)}|^2}\omega_{t_\alpha, FS}^n \rightarrow 0.$$
By passing to a subsequence we may assume $a_i^\alpha\rightarrow a_i^\infty$ for all $i$, and 
$$\int_{X_0} \frac{|\sum_i a_i^\infty z_i|^2}{|z_0|^2+|z_1|^2+\cdots+|z_{N(\lambda)}|^2}\omega_{0, FS}^n=0.$$  
It follows that $\sum_i a_i^\infty S_i(0)=0$, which is a contradiction since $X_0$ is by hypothesis embedded by a complete linear system. This shows that $H$ is uniformly bounded above by $H_t$ and, using the previous lemma, it also finishes the proof of the first part of the theorem. 
 
The second part of the theorem follows from the fact \cite{DS} that under the $L^2$ embedding of $X_t$ into $\P^{N(\lambda)}$ we have $\omega_{t, \beta}=\omega_{t, FS}'+i\p\bp \psi_t$ with $|\nabla \psi_t|$ uniformly bounded and $\psi_t$ converges smoothly away from the singular set of the limit. Since $H_t$ is uniformly bounded, it is easy to see the same holds under the original embedding. 
\end{proof}

\section{Existence of K\"ahler-Einstein metrics} \label{section4}

Let $\D\in |-\lambda K_{\mathcal{X}/\Delta}| $ be as in the setting of Theorem \ref{thm1-3}, we define the following function
\begin{align*}
\beta_t:= \sup\{\beta\in (1-\lambda^{-1},1] \,|\, &\exists \mbox{ conical KE with cone angle } 2\pi\beta \mbox{ on } (X_t,D_t)\}, \mbox{for }t\neq 0;\\
\beta_0:= \sup\{\beta\in (1-\lambda^{-1},1] \,|\, &\exists \mbox{ weak conical KE with cone angle } 2\pi\kappa \mbox{ on } (X_0,D_0)\text{ for }\forall \kappa\leq\beta\}
\end{align*}

Notice by Theorem \ref{thm1-3}, we may assume $\beta_t\geq \underline{\beta}>1-\lambda^{-1}$ for all $t\in\Delta$. 
For $t\neq 0$, it follows from \cite{CDS0, CDS1, CDS2, CDS3, Ber2} that the existence of K\"ahler-Einstein metrics on $X_t$ with cone angle $2\pi\beta$ ($\beta\in (0, 1]$) along $D_t$ corresponds to the K-polystability of $(X_t, (1-\beta)D_t)$, and the latter condition satisfies an obvious interpolation property for $\beta$.  In particular, for $t\neq 0$, there  indeed exists  a K\"ahler-Einstein metric on $X_t$ with cone angle $2\pi \beta$ along $D_t$ for all $\beta\in (0, \beta_t)$. On the central fiber since it is one of our goal to establish the existence result, at this stage we do not have the interpolation property. This is the reason that the above definition we distinguish between the case $t\neq 0$ and $t=0$.

\subsection{Lower semi-continuity}
\begin{prop}\label{Semi-C}
$\beta_t$ is a lower semi-continuous function of $t\in \Delta$.
\end{prop}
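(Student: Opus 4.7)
My plan is to split the argument into the two cases $t_0 \in \Delta^*$ and $t_0 = 0$, since the central fiber is the genuinely delicate one. In both cases I fix $\beta^{\ast} < \beta_{t_0}$ and pick an intermediate value $\beta \in (\beta^{\ast},\beta_{t_0})$, and aim to produce a conical KE metric on $(X_t,(1-\beta)D_t)$ for all $t$ in a small neighborhood of $t_0$; this immediately yields $\beta_t \geq \beta > \beta^{\ast}$.

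For $t_0 \neq 0$ all nearby fibers are smooth, and $(X_{t_0},(1-\beta)D_{t_0})$ is a smooth log pair admitting a genuinely conical KE. Combining Remark~\ref{rmk2-7} with the Bando--Mabuchi type uniqueness, the identity components of the automorphism groups of the nearby smooth log pairs are trivial, so I would apply the standard implicit function theorem in the H\"older spaces adapted to conical metrics (as in \cite{Do}, \cite{JMR}), perturbing simultaneously in $t$ and in the cone angle, to transport existence to $(X_t,(1-\beta)D_t)$ for $t$ close to $t_0$. This part is routine once the discreteness of $\mathrm{Aut}_0$ is in hand.

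For $t_0 = 0$ I would run a continuity method in the angle parameter $\kappa$ on a fixed small disc. Set
\[
T := \{\kappa \in [\underline{\beta},\beta] \,:\, \exists\, \delta_\kappa>0 \text{ so that conical KE exists on } (X_t,(1-\kappa)D_t) \text{ for all } t\in\Delta_{\delta_\kappa}^{*}\}.
\]
By Theorem~\ref{thm1-3} we have $\underline{\beta}\in T$, and the goal is to show $\beta\in T$. Openness of $T$ would follow from an argument in the spirit of \cite{Yao}: given existence at angle $\kappa_0$ on each $X_t$ in a punctured disc, a uniform $L^\infty$ bound on the potentials together with the uniform positive lower bound $r(\kappa_0)\geq r(\underline{\beta})>0$ on the Ricci curvature gives uniform Poincar\'e/Sobolev constants, and a Yao-type deformation of the cone angle opens an interval around $\kappa_0$ whose size is uniform in $t$. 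Closedness of $T$ at a limit point $\kappa_\infty \leq \beta$ would use that weak conical KE exists on $(X_0,(1-\kappa_\infty)D_0)$ by the very definition of $\beta_0$, anchoring the central-fiber boundary data needed in the Deligne-pairing / direct image subharmonicity machinery of Section~\ref{section2}; this yields uniform $L^\infty$ bounds on $\phi_{t,\kappa_j}$ (as $j\to\infty$, with $t$ in a fixed punctured disc) through a Ko\l odziej / Proposition~\ref{prop2-10}-style estimate, and then passage to a limit solves the conical KE equation at $\kappa_\infty$, with regularity on the smooth fibers upgrading the solution to a genuine conical KE via \cite{GP,CW}.

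The main obstacle I anticipate is the apparent circularity in invoking Proposition~\ref{prop2-10} inside the continuity scheme: that proposition assumes existence on \emph{all} fibers near $0$ in order to derive the $L^\infty$ bound used to close the argument. I would resolve this by feeding the scheme with the weak conical KE on $(X_0,(1-\kappa)D_0)$ (guaranteed for every $\kappa\leq\beta$ by the Section~\ref{section4} definition of $\beta_0$) as the central-fiber boundary datum, and by iterating Proposition~\ref{prop2-10}'s Dirichlet/Deligne-pairing construction only as far as the current hypothesis of existence has already been established on the whole punctured disc; the anchoring by the central-fiber weak KE at angle $\beta$ prevents the scheme from terminating before reaching $\beta$. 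Once the theorem is proved for this ``interpolated'' function $\beta_0$, a separate a posteriori argument (using the same GH-compactness, once closedness of the set of achievable angles on $X_0$ is in hand) identifies it with the naive supremum $\sup\{\beta\,|\,\exists\, \text{weak conical KE at angle } 2\pi\beta\}$, justifying the statement of Theorem~\ref{thm1-4} as written.
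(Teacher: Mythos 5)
Your reduction to the two cases $t_0\neq 0$ and $t_0=0$, and the treatment of $t_0\neq 0$ via Donaldson's implicit function theorem together with the discreteness of $\mathrm{Aut}_0$, match the paper. The gap is in the central step for $t_0=0$: openness of your set $T$ with a \emph{uniform-in-$t$} deformation interval around each $\kappa_0\in T$ is not established by the ingredients you cite. A uniform $L^\infty$ bound on the potentials at the anchor angle $\kappa_0$ (obtained from the Section~\ref{section2} subharmonicity machinery anchored at the central fiber), together with uniform Poincar\'e and Sobolev constants, does \emph{not} by itself yield a uniform-in-$t$ properness constant for the log-Ding or log-Mabuchi functional at $\kappa_0$; and in a Yao-type argument it is precisely the properness constant that controls the size of the openness interval in $\kappa$. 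If those constants were allowed to deteriorate as $t\to 0$, the openness interval could shrink to zero along some sequence $t_i\to 0$---but that is exactly the scenario in which $\beta_t$ fails to be lower semicontinuous, so the claim of a uniform interval is, as phrased, circular.

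The paper bypasses this by arguing by contradiction with Gromov--Hausdorff compactness rather than running a continuity method in $\kappa$. Assuming $\liminf_{t\to 0}\beta_t=\beta_\infty<\beta_0$, one forms the compact limit set $\mathcal{C}$ of the family $(X_t,(1-\beta)D_t,\omega_{t,\beta})$ at angle $2\pi\beta_\infty$, realized in a Chow variety modulo $U(N)$, and proves via Lemma~\ref{Uniqueness} that $\mathcal{C}$ is the singleton $\{(X_0,(1-\beta_\infty)D_0,\omega_{0,\beta_\infty})\}$. The proof of Lemma~\ref{Uniqueness} rests on two ingredients absent from your sketch: a local uniqueness statement via $I$-functional continuity (Claim~1), and a point-set topology argument (Lemma~\ref{connected}) showing $\mathcal{C}$ is connected (Claim~2). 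Once $\mathcal{C}$ is a single point, one derives a contradiction by noting that for each $t_i\to 0$ the limit of $\omega_{t_i,\beta}$ as $\beta\to\beta_{t_i}$ has a non-trivial continuous automorphism (by \cite{CDS3}), while the unique element of $\mathcal{C}$ has discrete automorphism group. Any correct execution of your continuity-method framing would have to reproduce exactly these GH-compactness, connectedness, and automorphism-jumping inputs to justify the uniform openness interval; at that point the paper's contradiction argument is the leaner packaging of the same content.
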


\begin{proof}
The lower semicontinuity at $t\neq 0$ follows from Donaldson's implicit function theorem \cite{Do} for conical K\"ahler-Einstein metrics, since in our situation $Aut(X_t, D_t)$ is discrete for all $t\in \Delta$ (see Remark \ref{rmk2-7}). Suppose $\beta_t$ is not lower semi-continuous at $t=0$, i.e., $\liminf_{t\to 0}\beta_t=\beta_\infty< \beta_0\leq 1$.  Choose an increasing sequence $\beta_i<\beta_\infty$ with $\lim_{i\rightarrow\infty}\beta_i=\beta_\infty$. Given any $i$, for $t\neq 0$ with $|t|$ sufficiently small, by the above discussion  there exist genuinely conical K\"ahler-Einstein metrics $\omega_{t,\beta_i}$ on $(X_{t}, D_{t})$ with angle $2\pi\beta_i$. By the definition of $\beta_0$, we have a weak conical K\"ahler-Einstein pair $(X_0, D_0, \omega_{0,\beta}, \beta)$ for all $\beta\in (\underline{\beta}, \beta_\infty]$. So similar to the proof of Theorem \ref{thm1-3}, using Proposition \ref{prop2-10}, the linear interpolation property and Theorem \ref{thm3-1}, we see that $(X_0, (1-\beta_i)D_0, \omega_{0, \beta_i})$ is indeed the Gromov-Hausdorff limit of $(X_{t}, (1-\beta_i)D_{t}, \omega_{t,\beta_i})$ as $t\rightarrow0$. Using the diagonal arguments as in \cite{CDS2}, by passing to a subsequence we could take the Gromov-Hausdorff limit of the sequence $(X_0, (1-\beta_i)D_0, \omega_{0, \beta_i})$, and obtain a KLT weak conical K\"ahler-Einstein variety $(Y, (1-\beta_\infty)\Delta, \omega)$. However by Berman's theorem \cite{Ber2} we know $(X_0, (1-\beta_\infty)D_0)$ is K-polystable. So by \cite{CDS2} we see $(Y, \Delta)$ must be isomorphic to $(X_0, D_0)$, and by the uniqueness theorem \cite{BBEGZ} $(Y, (1-\beta_\infty)\Delta, \omega)$ is indeed isomorphic to $(X_0, (1-\beta_\infty)D_0, \omega_{0, \beta_\infty})$. 

Now let $\mathcal Z$ be the space of all $(X_t, (1-\beta)D_t, \omega_{t, \beta})$ with $0<|t|\leq 1/2$ and $\beta\in [\underline{\beta}, \beta_{t}]$. Let $\bar{ \mathcal Z}$ be the closure of $\mathcal Z$ under refined GH convergence (remembering the convergence of complex structures, as in \cite{DS}), and let  $\mathcal C$ be the space of $\bar{\mathcal Z}\setminus \mathcal Z$ consisting of limits with angle $2\pi\beta_\infty$. By \cite{CDS2}, \cite{CDS3} we can take the $L^2$ orthonormal embeddings to embed $(X_t, D_t)$ uniformly into a fixed projective space $\P^N$, and realize the refined GH convergence as the convergence in a fixed Chow variety $\mathbf{Ch}$ (of pairs of bounded degree), up to unitary transformations. In particular, we have an injective continuous map from $\bar{\mathcal Z}$ into $\mathbf{Ch}/U(N)$, which is a homeomorphism onto its image. We will henceforth view $\bar{\mathcal Z}$ (and hence $\mathcal C$) as a compact subspace of $\mathbf{Ch}/U(N)$. The advantage of this point of view is that the latter is naturally a metric space, as the quotient by a group of isometries of a compact metric space, and it makes sense to talk about a metric neighborhood. 

Now we observe by the above discussion that $(X_0, (1-\beta_\infty)D_0, \omega_{0,\beta_\infty})$ is in $\mathcal{C}$.

\begin{lem}\label{Uniqueness}
$\mathcal{C}=\{(X_0, (1-\beta_\infty)D_0, \omega_{0,\beta_\infty})\}.$
\end{lem}

\begin{proof}[Proof of the lemma \ref{Uniqueness}]
\noindent Choosing a small  open neighborhood $\mathcal{U}$ of $(X_0, (1-\beta_\infty)D_0, \omega_{0,\beta_\infty})$ in $\mathcal{C}$, we could assume all the elements in $\mathcal{U}$ have discrete automorphism group, since this condition is an open condition on $\mathbf{Ch}$. Now take any $(Y, (1-\beta_\infty)\Delta, \omega)$ in $\mathcal{U}$, which is the GH limit of some sequence $(X_{t_i}, (1-\beta_i)D_{t_i}, \omega_{t_i,\beta_i})$ with $\beta_i\rightarrow\beta_\infty$.  Write $\omega_{t_i, \beta_i}=\omega_{t_i}+i\p\bar\partial \phi_{t_i, \beta_i}$, where $\omega_{t_i}$ denotes the pull-back of the Fubini-Study metric on $X_i$ (normalized to have the correct cohomology class) under the $L^2$ holomorphic embedding with respect to the conical K\"ahler-Einstein metrics.  By \cite{CDS2}, we know $\phi_{t_i, \beta_i}$ is uniformly bounded in $L^\infty$.

 {\bfseries{Claim 1}}: There exists a $\delta>0$, so that for all $\beta\in (\beta_\infty-\delta, \beta_\infty)$ we can find $N$, such that  for all $i>N$, 
$$\sup_{\kappa\in[\beta, \beta^i]} \{I_{\omega_{t_i}}(\phi_{t_i, \kappa})-I_{\omega_{t_i}}(\phi_{t_i, \beta_i})\}\leq 1. $$

\emph{Proof of Claim 1}: Suppose this is not true. Then we could find a subsequence, still denoted by $t_i\to 0$ and $\kappa_i\to \beta_\infty$ such that $I_{\omega_{t_i}}(\phi_{t_i, \kappa_i})-I_{\omega_{t_i}}(\phi_{t_i, \beta_i})>1$. Since by Donaldson's implicit  function theorem \cite{Do} $\phi_{t_i, \kappa}$ depends continuously on $\kappa$ in the $L^\infty$ topology,  we know $\lim_{\kappa\rightarrow \beta_i} I_{\omega_{t_i}}(\phi_{t_i, \kappa})-I_{\omega_{t_i}}(\phi_{t_i, \beta_i})=0$. Therefore, we may by choosing a different sequence $\kappa_i$, assume that $I_{\omega_{t_i}}(\phi_{t_i, \kappa_i})-I_{\omega_{t_i}}(\phi_{t_i, \beta_i})=1$. The $L^\infty$ bound of $\phi_{t_i,\beta_i}$ implies the uniform bound of $I_{\omega_{t_i}}(\phi_{t_i,\beta_i})$ and therefore the uniform bound of $I_{\omega_{t_i}}(\phi_{t_i,\kappa_i})$. Then as before we achieve uniform $C^0$ bound on $\phi_{t_i, \kappa_i}$ and higher derivative bound away from the divisor and singularities of the central fiber.
This implies that
$(X_{t_i}, (1-\kappa_i)D_{t_i}, \omega_{t_i, \kappa_i})$ converges by subsequence to a weak conical K\"ahler-Einstein metric on the same pair $(Y, (1-\beta_\infty)\Delta, \omega)$. By  assumption  $\text{Aut}(Y, \Delta)$ is discrete, the uniqueness  of weak conical K\"ahler-Einstein metrics \cite{BBEGZ} implies that the limit of $\omega_{t_i,\kappa_i}$ must concide with the limit of $\omega_{t_i,\beta_i}$. On the other hand, as before we know the gap of the $I$ functional in the limit is still $1$, contradiction.

This claim guarantees that for some $\beta<\beta_\infty$, $||\phi_{t_i,\beta}||_{L^\infty}$ is bounded independent of $i$ such that $(X_{t_i}, (1-\beta)D_{t_i}, \omega_{t_i,\beta})$ converges to  $(Y, (1-\beta)\Delta, \omega)$ by Theorem \ref{thm3-1}. However previously it has already been proved that $(X_{t_i}, (1-\beta)D_{t_i}, \omega_{t_i, \beta})$ converges in GH sense to $(X_0, (1-\beta)D_0, \omega_{0,\beta})$ (also by Theorem \ref{thm3-1}). This concludes that $(Y, \Delta)=(X_0, D_0)$ and therefore $(Y, (1-\beta_\infty)\Delta, \omega)=(X_0, (1-\beta_\infty)D_0, \omega_{0,\beta_\infty})$. This proves that 
\begin{equation} \label{eqn4-1}
\mathcal{C}\cap \mathcal U=\{(X_0, (1-\beta_\infty)D_0, \omega_{0,\beta_\infty})\}.
\end{equation}

{\bfseries{Claim 2}}: $\mathcal{C}$ is connected.\\

\emph{Proof of Claim 2}: Define a family $C_\alpha=\cup_{0<|t|<\alpha}\{ (X_t, (1-\beta)D_t, \omega_{t,\beta})|\beta\in (\beta_t-\alpha,\beta_t)\}$ indexed by $\alpha\in (0,1)$ inside $\mathbf{Ch}$, which is pre-compact under the refined GH topology by \cite{CDS2}. Clearly $\lim_{\alpha\to o}C_\alpha=\mathcal{C}$ and each $C_\alpha$ is path-connected. The claim follows by applying the elementary point-set topology result Lemma \ref{connected} to this situation.
 
Now the Lemma \ref{Uniqueness} follows from Claim 2 and (\ref{eqn4-1}).
\end{proof}

\noindent \emph{Continuing Proof of Proposition \ref{Semi-C}:}

 By the definition of $\beta_t$ and \cite{CDS3},  for any fixed $i$, $(X_{t_i},(1-\beta)D_{t_i}, \omega_{t_i,\beta})$  converges by subsequence to some K\"ahler-Einstein pair $(W_{t_i}, (1-\beta_{t_i})\Delta_{t_i}, \omega_{t_i,\beta_{t_i}})$ with $Aut(W_{t_i}, \Delta_{t_i})$ containing a non-trivial one parameter subgroup. This limiting sequence would also converge by subsequence to the KLT pair $(X_0, (1-\beta_\infty)D_0,\omega_{0,\beta_\infty})$ by the above Lemma \ref{Uniqueness}. This is a contradiction since by assumption $Aut(X_0, D_0)$ is discrete.
  \end{proof}
 
Let us try to formulate some elementary point-set topology lemma that would implies the connectivity of $\mathcal{C}$ in this proof.
 
 \begin{lem}\label{connected}
Let $\mathcal{I}$ be a total order set with the least element ``o'', let $C_\alpha$ be a chain of subsets indexed by $\mathcal{I}\backslash o$ (i.e. $C_{\alpha_1}\subset C_{\alpha_2}$ if $\alpha_1<\alpha_2$) of a fixed compact metric space $A$ equipped with the subspace topology, denote $C_o=\lim_{\alpha\to o} C_\alpha:=\{\lim_{i\to \infty}x_{\alpha_i}| x_{\alpha_i}\in C_{\alpha_i},\text{ for some }\alpha_i\to o \}$ to be the limit set consisting of all possible sequential limits, if $C_\alpha$ is connected for any $\alpha\in \mathcal{I}\backslash o$ , then $C_o$ is also connected.
 \end{lem}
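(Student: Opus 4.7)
The plan is to recognize $C_o$ as a nested intersection of nonempty compact connected subsets of $A$ and then appeal to the classical fact that such an intersection is itself connected.

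First I would establish the identification
$$C_o \;=\; \bigcap_{\alpha \in \mathcal{I}\setminus o} \overline{C_\alpha},$$
where the closures are taken in $A$. The inclusion $\subseteq$ is immediate from the definition of $C_o$: for any $x=\lim_i x_{\alpha_i}$ with $x_{\alpha_i}\in C_{\alpha_i}$ and $\alpha_i\to o$, and any fixed $\alpha\in \mathcal{I}\setminus o$, the chain property $C_{\alpha_i}\subseteq C_\alpha$ holds once $\alpha_i<\alpha$, so $x\in \overline{C_\alpha}$. For the reverse inclusion, given $x\in \bigcap_\alpha \overline{C_\alpha}$, I would pick any sequence $\alpha_i\to o$ in $\mathcal{I}$ and, using metrizability of $A$, choose $x_{\alpha_i}\in C_{\alpha_i}$ with $d(x_{\alpha_i},x)<1/i$; this exhibits $x$ as an element of $C_o$. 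In the same step I record that each $\overline{C_\alpha}$ is closed in the compact space $A$, hence compact, and connected because it is the closure of a connected set.

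With these preparations I would conclude by a standard separation argument. Suppose for contradiction $C_o=U\sqcup V$ is a separation into nonempty subsets closed in $C_o$. Since $C_o$ is the intersection of closed sets and so closed in $A$, both $U$ and $V$ are compact; by normality of the metric space $A$ they admit disjoint open neighborhoods $\tilde U\supseteq U$ and $\tilde V\supseteq V$. Consider the closed sets $K_\alpha:=\overline{C_\alpha}\setminus(\tilde U\cup\tilde V)\subseteq A$. These form a totally ordered (decreasing) family whose intersection equals $C_o\setminus(\tilde U\cup\tilde V)=\emptyset$; by the finite intersection property in the compact space $A$, some $K_{\alpha_0}$ must itself be empty, i.e.\ $\overline{C_{\alpha_0}}\subseteq \tilde U\cup\tilde V$. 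But $\overline{C_{\alpha_0}}\supseteq C_o\supseteq U\cup V$, so $\overline{C_{\alpha_0}}$ meets both $\tilde U$ and $\tilde V$, producing a separation of the connected set $\overline{C_{\alpha_0}}$ — a contradiction.

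I do not expect a genuine obstacle here: the statement is purely set-topological and every step is elementary once the correct picture (a nested chain of compact connecteds with totally ordered index) is in place. The only point requiring a small amount of care is the rewriting of the sequentially defined $C_o$ as the nested intersection $\bigcap_\alpha \overline{C_\alpha}$, where metrizability of $A$ is used to turn approximation by closures into honest sequences; and of course the argument is vacuous unless $C_o\neq\emptyset$, which in the intended application follows from the existence of a cluster point guaranteed by the compactness of $\bar{\mathcal{Z}}\subset \mathbf{Ch}/U(N)$.
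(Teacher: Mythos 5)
Your proof is correct, and the core idea is the same separation-by-contradiction that the paper runs, but you package it in a cleaner and slightly more structural way. The key reformulation you introduce is the identification $C_o=\bigcap_{\alpha}\overline{C_\alpha}$, after which the lemma becomes the classical fact that a nested chain of nonempty compact connected sets has connected intersection, and the contradiction step reduces to a one-line appeal to the finite intersection property. The paper instead works directly from the sequential definition: it extracts a subsequential limit of points $x_{\alpha_i}\in C_{\alpha_i}\setminus(U\cup V)$ to show that $C_\alpha\subseteq U\cup V$ for small $\alpha$, then uses connectedness of $C_\alpha$ (not $\overline{C_\alpha}$) together with the chain property to force $C_\alpha\subseteq U$ for all small $\alpha$ or $C_\alpha\subseteq V$ for all small $\alpha$, and concludes $C_o\subseteq\overline U$ or $C_o\subseteq\overline V$. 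Your route has the small advantage of making it explicit that $C_o$ is closed (the paper simply asserts that $C_o$ is compact), and of replacing the subsequence extraction by the FIP argument. The one place to be careful in your write-up is the reverse inclusion $\bigcap_\alpha\overline{C_\alpha}\subseteq C_o$: it uses metrizability of $A$ and the existence of some sequence $\alpha_i\to o$ in $\mathcal I\setminus o$; the latter is implicit in the definition of $C_o$ as a set of sequential limits and is automatic in the paper's application where $\mathcal I\cong[0,1)$, but is worth flagging if one wants the lemma in the generality stated.
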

 
 \begin{proof}
 Suppose $C_o$ is not connected, which means $C_o=C_o^1\sqcup C_o^2$ for two closed nonempty subspaces of $C_o$. Since $A$ is compact, $C_o$ must also be compact and therefore $C_o^1, C_o^2$ are both compact. We then could find two disjoint open subsets $U, V\subset A$ (this implies $\overline U\cap V=\emptyset, \overline V\cap U=\emptyset$) such that $C_o^1\subset U, C_o^2\subset V$ since $A$ is Hausdorff. We claim that $C_\alpha \subset U\cup V$ for all $\alpha$ small enough. Otherwise, we could pick a sequence $x_{\alpha_i}\in C_{\alpha_i}\backslash (U\cup V)$ with $\lim_{i\to \infty}\alpha_i\to o$. Then $\lim_{i\to \infty} x_{\alpha_i}\in (U\cup V)^c\cap C_o$, contradicting the choice of $U,V$. This claim implies that $C_\alpha=(U\cap C_\alpha)\sqcup (V\cap C_\alpha)$.  By the assumption on connectivity of $C_\alpha$, for any fixed $\alpha\in \mathcal{I}\backslash o$ small, $C_\alpha\subset U$ or $C_\alpha\subset V$. Since $C_\alpha$ forms a decreasing family of subsets about $\alpha$,  actually either $C_\alpha\subset U$ for all $\alpha$ small enough, or $C_\alpha\subset V$ for all $\alpha$ small enough. We thus get $C_o\subset \overline U$ or $C_o\subset \overline V$, contradicting non-emptiness of $C_o^1$ and $C_o^2$.
\end{proof}

 \begin{rmk}
 If $A$ is not compact the conclusion is not true. Let $D_i:=\{(x,e^{-x})| x\in [0, i]\}\cup \{(x, 0)|x\in [0, i]\}\cup \{(i, y)|y\in [0, e^{-i}]\}\subset \mathbb{R}^2$, then the limiting set of the path-connected sets $C_i=\cup_{j\geq i} D_j$ is $\lim C_i=\{(x, e^{-x})|x\in [0, \infty)\}\cup \{(x,0)|x\in [0, \infty)\}$, which is not connected.
  \end{rmk}

\begin{rmk}(On ``refined GH topology''). A small issue that should be noticed is that in general, even for smooth KE Fano varieties, it may happen that two not biholomorphic KE Fano varieties are isometric as metric spaces (w.r.t. the metric structure induced by the KE metrics). In the smooth case it is easy to show, using holonomy and vanishing considerations, see Theorem $1.1.1$ in \cite{SpTh}, that this can happen only if the two varieties $X_1, X_2$ differ by complex conjugation in a product factor, i.e., $X_1\cong Y \times Z $ and $X_2\cong \overline{Y} \times Z$, with $Y,Z$ KE Fano manifolds with $Y \ncong \overline{Y}$ and, eventually, $Z=\{p\}$. This implies that the natural ``forgetful'' map from the set of biholomorphic isometry equivalence classes of KE Fano manifolds (which forms, set-theoretically, the moduli space $\mathcal{M}$ we are interested in) to the space of compact metric spaces (equipped with the GH topology) is not injective in general. Thus some care is required when we consider the GH compactification of the KE moduli space. This is the reason why we work on the Chow variety.

\end{rmk}

\subsection{Continuity Method to achieve Existence}

Let $(\mathcal X, \mathcal D)$ be as before. 

\begin{thm}\label{thm4-9}
Suppose $(X_0, (1-\beta_*)D_0)$ is K-polystable for some $\beta_*\in (0, 1]$, then there exists a unique weak conical K\"ahler-Einstein metric on $(X_0, D_0)$ with angle $2\pi\beta$ for $\beta\in (0, \beta_*]$.
\end{thm}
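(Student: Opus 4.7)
My plan is to run Donaldson's continuity method on the singular central fiber, using the $\Q$-Gorenstein smoothing to transfer information from nearby smooth fibers $X_t$ (where the theory of \cite{CDS0,CDS1,CDS2,CDS3} applies) back to $X_0$, rather than attempting to develop elliptic theory for weak conical metrics directly on the singular variety. Set
$$S := \{\beta \in (1-\lambda^{-1}, \beta_*] : (X_0, (1-\kappa)D_0) \text{ admits a weak conical KE metric for every } \kappa \in (0, \beta]\},$$
so that $S$ coincides with $(1-\lambda^{-1}, \beta_*] \cap \{\beta \le \beta_0\}$. By Theorem \ref{thm1-3} we already know $(1-\lambda^{-1}, \underline{\beta}] \subset S$, so $S$ is non-empty; I want to show $S = (1-\lambda^{-1}, \beta_*]$ (together with the range $(0, 1-\lambda^{-1}]$, which is already handled by Theorem \ref{thm1-3}). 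Uniqueness for each fixed $\beta$ will then follow from the generalized Bando-Mabuchi theorem \cite{BBEGZ} combined with the discreteness of $Aut(X_0, D_0)$ observed in Remark \ref{rmk2-7}.

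For openness, suppose $\beta \in S$ with $\beta < \beta_*$. First I would note that K-polystability of $(X_0, (1-\kappa)D_0)$ for $\kappa$ slightly above $\beta$ follows from K-polystability at $\beta_*$ by the linear dependence of the log Donaldson-Futaki invariant on the angle. On the smooth nearby fibers, Donaldson's implicit function theorem \cite{Do} combined with the lower semi-continuity of $\beta_t$ from Proposition \ref{Semi-C} produces genuine conical KE metrics $\omega_{t, \beta+\varepsilon}$ on $(X_t, (1-\beta-\varepsilon)D_t)$ for all sufficiently small $|t|$ once $\varepsilon$ is small. I would then run a diagonal argument letting $t \to 0$: a version of Proposition \ref{prop2-10} adapted to the angle $\beta + \varepsilon$ gives uniform $L^\infty$ estimates on the potentials, and Theorem \ref{thm3-1} identifies the Gromov-Hausdorff limit as a weak conical KE pair at angle $2\pi(\beta+\varepsilon)$. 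The CDS partial $C^0$ estimate \cite{CDS2,CDS3}, combined with the discreteness of $Aut(X_0, D_0)$ and K-polystability of $(X_0, (1-\beta-\varepsilon)D_0)$, then forces the limit to be $(X_0, (1-\beta-\varepsilon)D_0)$ itself, rather than an algebro-geometric degeneration, giving $\beta + \varepsilon \in S$.

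For closedness, take $\beta_i \in S$ with $\beta_i \nearrow \beta_\infty \leq \beta_*$. The properness of the log-Mabuchi functional obtained via Proposition \ref{logDM} and the energy bounds of Proposition \ref{prop2-10} applied on $X_0$ gives a uniform $I$-functional bound on the potentials $\phi_{0, \beta_i}$, and hence a uniform $L^\infty$ bound on them (one has to be slightly careful around the endpoint $1 - \lambda^{-1}$, but $\beta_i$ can be kept uniformly away from it). A subsequence then converges in $L^\infty$ to a bounded $\omega_{0,FS}$-psh potential, and stability of the complex Monge-Amp\`ere operator under $L^\infty$ convergence (Bedford-Taylor) shows that the limit satisfies equation \eqref{eqn2-2} at angle $\beta_\infty$, producing a weak conical KE metric on $(X_0, (1-\beta_\infty)D_0)$ and hence $\beta_\infty \in S$.

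The main obstacle is clearly the openness step, because the direct analytic approach via the implicit function theorem is unavailable on the singular $X_0$. My plan sidesteps this by performing the perturbation analysis only on the smooth fibers and taking a Gromov-Hausdorff limit, which requires the simultaneous use of three pieces already established in the paper: Theorem \ref{thm3-1} (GH convergence from uniform $L^\infty$ bounds), Proposition \ref{Semi-C} (lower semi-continuity of $\beta_t$), and the CDS-type argument identifying singular GH limits with algebro-geometric degenerations. A subtle point that will require extra care is ensuring that, in the diagonal limit, K-polystability of the relevant pair really does rule out jumping to another central variety; this is precisely where the hypothesis on $(X_0, (1-\beta_*)D_0)$ is used, through the uniqueness theorem of \cite{BBEGZ}.
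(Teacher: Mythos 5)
Your overall plan — running Donaldson's continuity method in the cone angle on $X_0$ by diagonal limits from the smooth fibers, with GH convergence, the Chow‑variety compactness of \cite{CDS2,CDS3}, and K-polystability to rule out jumping — matches the paper's strategy. However, both the openness and closedness steps as you've written them contain real gaps, and the actual proof in the paper handles each of them by a mechanism different from what you propose.

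For openness, the claim that lower semi-continuity of $\beta_t$ (Proposition~\ref{Semi-C}) combined with Donaldson's implicit function theorem on the smooth fibers produces conical KE metrics at angle $\beta+\varepsilon$ on $(X_t,(1-\beta-\varepsilon)D_t)$ for a \emph{uniform} $\varepsilon>0$ as $|t|\to 0$ is circular. Lower semi-continuity only gives $\liminf_{t\to 0}\beta_t\geq\beta_0\geq\beta$, hence $\beta_t>\beta-\delta$; it does not give $\beta_t>\beta+\varepsilon$, and the $\varepsilon_t$ furnished by the implicit function theorem on each $X_t$ has no a priori uniform lower bound. You would need $\beta_0>\beta$ strictly, which is precisely what openness is supposed to produce. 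The paper instead first proves the \emph{strict} inequality $\beta_t>\beta$ for small $|t|$ by a compactness argument in the Chow variety (via Lemma~\ref{Uniqueness} and upper semi-continuity of $\dim Aut_0$), and then extracts a uniform $\tilde\beta>\beta$ by a separate contradiction argument controlling the $I$-functional on $[\beta,\tilde\beta]$, without ever claiming $\beta_t>\beta+\varepsilon$ a priori. Also, the invocation of ``a version of Proposition~\ref{prop2-10} adapted to the angle $\beta+\varepsilon$'' to get uniform $L^\infty$ estimates is unfounded: Proposition~\ref{prop2-10} assumes existence of weak conical KE on the \emph{central} fiber at the given angle (again circular), and it only provides a lower energy bound, not an $L^\infty$ bound — the passage to $L^\infty$ via properness (inequality~\eqref{proper}) is only valid for $\beta<\underline\beta$, and the properness constant $\delta_\beta$ degenerates as $\beta\to\underline\beta$.

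For closedness, your plan of extracting a uniform $L^\infty$ bound on the potentials $\phi_{0,\beta_j}$ directly on $X_0$ from Propositions~\ref{logDM} and~\ref{prop2-10} has the same problem: those ingredients only give properness of the log-Mabuchi functional on $[\hat\beta,\underline\beta)$, not up to $\beta_*$, and indeed the whole point of pushing the continuity method beyond $\underline\beta$ is that a direct coercivity estimate is unavailable there. The ``slight care near $1-\lambda^{-1}$'' you mention is not where the difficulty lies — it lies at the other end, near $\underline\beta$ and above. The paper's actual closedness argument avoids the need for any uniform $C^0$ bound on $X_0$ itself: it takes a Gromov-Hausdorff limit of the pairs $(X_0,(1-\beta_j)D_0,\omega_{0,\beta_j})$ (available because each is itself a GH limit from smooth fibers), obtains some weak conical KE pair $(Y,(1-\beta_\infty)\Delta,\omega)$ which a priori \emph{may differ} from $(X_0,D_0)$, and then rules out $(Y,\Delta)\ncong(X_0,D_0)$ via the CDS test-configuration argument, using precisely the K-polystability hypothesis at $\beta_*$. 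So the K-polystability hypothesis enters the paper's proof through the closedness step (and implicitly via Lemma~\ref{Uniqueness} in openness), not through the Bedford–Taylor stability argument you propose, which cannot start without the uniform $L^\infty$ bound you haven't established.
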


Define 
$$A:=\{\beta\leq \beta_*| \mbox{There exists a weak conical KE metric on } (X_0, D_0)\mbox{ with angle }2\pi\gamma \mbox{ for any }\kappa\leq\beta\}. $$
 As in the work of \cite{CDS1,CDS2, CDS3}, we will also use the method of deforming the cone angles. By Theorem \ref{thm1-3}, in order to establish Theorem \ref{thm4-9}, it suffices to show $A$ is both open and closed in $[\underline{\beta},\beta_*]$, where $\underline{\beta}$ is the number we obtained in Theorem \ref{thm1-3}.

\subsubsection{Openness}

For any $\beta(<\beta_*)\in A$, i.e. we have a weak conical K\"ahler-Einstein pair $(X_0, (1-\kappa)D_0, \omega_{0,\kappa})$ for any $\kappa\leq \beta$. We first show that $\beta_t> \beta$ for $|t|$ small enough. To see this, we use the lower semicontinuity of $\beta_t$ (Proposition \ref{Semi-C}). Suppose otherwise we have a subsequence $t_i\to 0$, $\beta_{t_i}\leq\beta$ and $\lim_{i\to \infty}\beta_{t_i}=\beta$. For each $i$, the weak conical K\"ahler-Einstein pair $(X_{t_i}, (1-\beta)D_{t_i}, \omega_{t_i,\beta})$ by sequence converges to a limit $(W_i,(1-\beta_{t_i})\Delta_i, \omega_i)$ as $\beta\to \beta_{t_i}$, with $Aut_0(W_i, \Delta_i)$ containing a non-trivial one parameter subgroup. By Lemma \ref{Uniqueness}, this sequence of limits must converge to $(X_0,(1-\beta)D_0, \omega_{0, \beta})$, contradicting the fact that $Aut_0(X_0, D_0)=\{1\}$.

Then by arguments similar to the proof of Lemma \ref{Uniqueness},  $(X_{t}, (1-\beta)D_{t}, \omega_{t,\beta})$ converges to $(X_0, (1-\beta)D_0, \omega_{0, \beta})$ in the Gromov-Hausdorff sense, and by \cite{CDS2},  the potential $\phi_{t,\beta}$ of $\omega_{t, \beta}$ relative to the induced Fubini-Study metric  $\omega_{t}$ with respect to the $L^2$ holomorphic embedding is uniformly bounded in $L^\infty$. This implies that $I_{\omega_{t}}(\phi_{t,\beta})$ is uniformly bounded.  This fact together with that $\beta_t>\beta$ implies that there is a $\tilde \beta>\beta$ such that $I_{\omega_t}(\phi_{t, \beta'})$ is uniformly bounded  for $\beta'\in [\beta, \tilde\beta]$ and for all $t$ sufficiently close to zero.   Otherwise we could find a subsequence $t_i\to 0, \beta<\kappa_i<\beta_{t_i}$ that converges to $\beta$ and a weak conical K\"ahler-Einstein pair $(X_{t_i}, (1-\kappa_i)D_{t_i}, \omega_{t_i, \kappa_i})$ with $I_{\omega_{t_i}}(\phi_{t_i, \kappa_i})-I_{\omega_{t_i}}(\phi_{t_i,\beta})=C$ for a fixed large constant $C$, this would lead to a contradiction by the uniqueness of weak conical K\"ahler-Einstein metrics and the arguments that we have used frequently previously.  From the uniform bounds of $I_{\omega_t}(\phi_{t, \beta'})$ it follows from Theorem \ref{thm3-1} that $(X_t, (1-\beta')D_t, \omega_{t,\beta'})$ converges by sequence to some weak conical K\"ahler-Einstein metric $(X_0, (1-\beta')D_0,\omega_{0, \beta'})$ as $t\to 0$.  This proves the openness of $A$.

\subsubsection{Closedness}

Take any sequence $\{\beta_j\}_{j=1,2,\cdots}\subset A$ which strictly increases to $\beta_\infty$. By the proof of Proposition \ref{Semi-C}, for any $j$, $\beta_t\geq \beta_j$ for $t$ small enough and the weak conical K\"ahler-Einstein pair $(X_0, (1-\beta_j)D_0, \omega_{0, \beta_j})$ is the limit of genuinely conical K\"ahler-Einstein metrics on the smooth fibers.  This enables us to take sequential Gromov-Hausdorff limit of $(X_0,(1-\beta_j)D_0, \omega_{0,\beta_j})$ from which we get a weak conical K\"ahler-Einstein pair $(Y,(1-\beta_\infty)\Delta,\omega)$. As in \cite{CDS3}, if $(Y, \Delta)$ is not isomorphic to $(X_0, D_0)$ then there is a test configuration for $(X_0, D_0)$ with central fiber $(Y, \Delta)$. This shows that $(X_0, (1-\beta_\infty)D_0)$ is not K-polystable. As in \cite{CDS3}, this implies that $(X_0, (1-\beta)D_0)$ is not K-polystable. Contradiction. \\

\subsection{Finishing Proof of Theorem \ref{thm1-1}:} 
By \cite{SW} and \cite{BBEGZ}, the automorphism $Aut(X_0, D_0)$ is discrete, the openness part of this continuity method holds for $\beta<1$ (we do not need to assume that $Aut(X_0)$ is discrete). This finishes the proof of the first part of Theorem \ref{thm1-1} on the existence of weak K\"ahler-Einstein metrics on $\mathbb{Q}$-Gorenstein smoothable K-polystable $\mathbb{Q}$-Fano variety $X_0$, and moreover, the weak K\"ahler-Einstein metric on $X_0$ is the Gromov-Hausdorff limit of conical K\"ahler-Einstein metrics on nearby smooth Fano manifolds. 

For the second part of Theorem \ref{thm1-1}, under the further assumption that $Aut(X_0)$ is discrete, this weak KE metric is unique by the uniqueness theorem \cite{BBEGZ}, denoted by $\omega_{0, KE}$. Since $(X_0, (1-\beta)D_0)$ is K-polystable for all $\beta\in [\underline{\beta}, 1]$, for any sequence of weak conical KE metrics $(X_0, (1-\beta_i)D_0, \omega_{0,\beta_i})$ with $\beta_i\to 1$, the $L^2$ embedding would converge to the $L^2$ embedding defined by  $(X_0,\omega_{0,KE})$. In particular, this shows that the $L^2$ embeddings of all the weak conical KE metrics $(X_0, (1-\beta)D_0, \omega_{0,\beta})$ are bounded (in the sense that the all the projective transformations with respect to the fixed $L^2$ embedding of $(X_0, \omega_{0, KE})$ are bounded), and the $I$ functional is uniformly bounded (with respect to the fixed K\"ahler metric $\omega_{0, FS}$). Then an argument similar to the \emph{openness} part in section 4.2.1 shows that the nearby $(X_t, D_t)$ admits conical KE metric with angle $2\pi$, which is smooth by the removable singularity theorem in \cite{CDS3}.  

\begin{rmk}
By Theorem \ref{thm4-9}, for a $\mathbb{Q}$-Gorenstein smoothable $\mathbb{Q}$-Fano variety $X_0$, the existence angle of \emph{weak conical K\"ahler-Einstein metric} along $(X_0, D_0)$ as above is an interval of one of the following forms:

\noindent 1. $(0,\beta_0)$ for some $\beta_0\leq1$ with $(X_0, (1-\beta_0)D_0)$ strictly K-semistable; \\
\noindent 2. $(0,1]$ if $X_0$ is K-polystable.
\end{rmk}

\section*{Appendix}\label{Appendix}
In this appendix we show the existence of a family  of ``nice'' divisors as claimed in Theorem \ref{thm1-3}. The proof is essentially based on Bertini type results (compare \cite{K}, Section $4$). 

\begin{prop}
Let $\mathcal{X}\rightarrow \Delta$ be a $\Q$-Gorenstein smoothing of a $\Q$-Fano variety $X_0$. Then, eventually shrinking the disc, there exists a divisor $\mathcal{D}\in |-\lambda K_{\mathcal{X}/ \Delta}|$ with $\lambda>1$  such that $D_t=\mathcal{D}|_{X_t}$ is smooth for $t\neq 0$ and $(X_t, (1-\beta)D_t)$ is KLT for any $\beta\in (0, 1]$ and $t \in \Delta$.
\end{prop}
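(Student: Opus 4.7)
The plan is to choose a sufficiently divisible $\lambda$, select a generic member $D_0 \in |-\lambda K_{X_0}|$ on the central fiber via a Bertini argument performed on a log resolution, lift it to a divisor $\mathcal{D}$ in the family, and check that the resulting $D_t$ are smooth on the smooth fibers and that the required KLT property propagates.

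First I would fix $\lambda>1$ sufficiently divisible so that $-\lambda K_{\mathcal{X}/\Delta}$ is a (Cartier) relatively very ample line bundle with vanishing higher direct images $R^i\pi_*(-\lambda K_{\mathcal{X}/\Delta})=0$ for $i>0$; this is possible by the relative $\mathbb{Q}$-ampleness of $-K_{\mathcal{X}/\Delta}$ and flatness of $\pi$. Combined with cohomology and base change, after shrinking $\Delta$ one gets surjectivity of the restriction $H^0(\mathcal{X},-\lambda K_{\mathcal{X}/\Delta})\twoheadrightarrow H^0(X_0,-\lambda K_{X_0})$ and base-point-freeness of $|-\lambda K_{X_t}|$ on every fiber.

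The central step is the construction of the nice divisor $D_0$. Pick a log resolution $\mu:Y_0\to X_0$ with exceptional prime divisors $E_i$ and $K_{Y_0}=\mu^*K_{X_0}+\sum a_iE_i$, where $a_i>-1$ by the KLT hypothesis on $X_0$. A generic $D_0\in|-\lambda K_{X_0}|$ will satisfy:
\begin{enumerate}
\item[(i)] $D_0 \not\supseteq \mu(E_i)$ for every $i$ --- because the restriction of the base-point-free linear system $|-\lambda K_{X_0}|$ to the proper closed subvariety $\mu(E_i)\subsetneq X_0$ has no fixed components;
\item[(ii)] the strict transform $\tilde D_0$ is smooth and $\tilde D_0+\sum E_i$ is SNC on $Y_0$ --- by applying the classical Bertini theorem to the base-point-free linear system $\mu^*|-\lambda K_{X_0}|$ on the smooth variety $Y_0$ and to its restrictions to strata of $\sum E_i$.
\end{enumerate}
From (i), $\mu^*D_0=\tilde D_0$ with no exceptional contribution, hence
$$K_{Y_0}+c\tilde D_0 = \mu^*(K_{X_0}+cD_0)+\sum a_iE_i.$$
Since $\tilde D_0+\sum E_i$ is SNC, the coefficients of the discrepancy divisor are $-a_i<1$ and $c<1$, so $(X_0,cD_0)$ is KLT for every $c\in[0,1)$, equivalently $(X_0,(1-\beta)D_0)$ is KLT for every $\beta\in(0,1]$.

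Finally I would lift $D_0$ to a divisor $\mathcal{D}\in|-\lambda K_{\mathcal{X}/\Delta}|$ via the surjection above, and by an additional genericity argument along the kernel of the restriction map --- a perturbation that does not alter $D_0$ --- arrange that $D_t=\mathcal{D}|_{X_t}$ is smooth on the smooth fiber $X_t$ for $t\ne0$ small (standard relative Bertini, applied where $X_t$ is already smooth). On such $X_t$ the pair $(X_t,(1-\beta)D_t)$ is log smooth, hence KLT for every $\beta\in(0,1]$, completing the proof after shrinking $\Delta$. I expect the principal technical obstacle to be step (i)--(ii) on the log resolution: classical Bertini only controls smoothness and transversality on smooth varieties, whereas the KLT property is dictated by behavior over the singular locus of $X_0$. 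The key maneuver is to pull the linear system back to $Y_0$, where Bertini applies in the usual way, and then translate the upstairs smoothness statement into the downstairs discrepancy estimate via the formula above.
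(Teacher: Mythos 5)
Your proof is correct and follows essentially the same strategy as the paper. You reprove directly, via Bertini on a log resolution, the central-fiber genericity statement that the paper imports from Koll\'ar's \emph{Singularities of Pairs} (Lemma 4.7.1 and Theorem 4.8.2 in \cite{K}); the discrepancy computation you carry out is exactly what those results encapsulate. In the extension step you fix a KLT-generic $D_0$ and perturb its lift along the kernel of $H^0(\mathcal{X},-\lambda K_{\mathcal{X}/\Delta})\to H^0(X_0,-\lambda K_{X_0})$ to arrange smoothness on a nearby fiber, whereas the paper packages the same freedom into a projective-linear bijection $R_t\colon |-\lambda K_{X_0}|\to |-\lambda K_{X_t}|$ built from hyperplane sections (using a frame of the direct image bundle) and chooses $D_0$ so that simultaneously $D_0$ is KLT-generic and $R_p(D_0)$ is Bertini-generic smooth on a fixed nearby fiber $X_p$; both versions then conclude identically by observing that the locus of $t$ with $D_t$ singular is a proper analytic subset of $\Delta$, hence discrete, and shrinking the disc. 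One small point worth making explicit: the genericity you invoke as ``standard relative Bertini'' is really the observation that the linear subsystem of lifts of the fixed $D_0$ restricts onto the \emph{full} linear system $|-\lambda K_{X_{t_1}}|$ on any fixed smooth fiber $X_{t_1}$ with $t_1\neq 0$ (because a section vanishing on $X_0$ is divisible by $t$, and dividing by $t_1$ sweeps out all of $H^0(X_{t_1},-\lambda K_{X_{t_1}})$), after which ordinary Bertini on the smooth fiber $X_{t_1}$ plus analyticity of the singular locus in $t$ finishes the job.
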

\begin{proof} Fix a sufficiently big integer $\lambda>0$ so that   $-\lambda K_{{X}_t}$ are very ample line bundles (with vanishing cohomology) and such that the dimension of the linear systems $|-\lambda K_{{X}_t}|$ is constantly equal to $d=d(\lambda)$.  
We begin by showing that on the central fiber ${X}_0$ the \emph{generic} element in the linear system $|-\lambda K_{{X}_0}|$ gives rise to a KLT pair $(X_0, (1-\beta)D_0)$ for any $\beta\in (0, 1]$ . By lemma 4.7.1 in \cite{K}, under that hypothesis  that the linear systems $|- \lambda K_{{X}_0}|$ is base point free, we have $$ \mbox{discrep}({X}_0, (1-\beta) |- \lambda K_{{X}_0}| )=\mbox{discrep}({X}_0) >-1,$$ for $\beta \in (0,1]$, i.e., $({X}_0, (1-\beta) |- \lambda K_{{X}_0}|)$ is KLT. Now the desired statement follows immediately from theorem 4.8.2 in \cite{K}.

By our assumption on $\lambda$,  $\pi_! (\mathcal{O}(-\lambda K_{\mathcal{X}/\Delta}))=\pi_{\ast} (\mathcal{O}(-\lambda K_{\mathcal{X}/\Delta}))$ is a holomorphic vector bundle over $\Delta$ of dimension $d+1$, whose fibers are canonically identified with $H^0({X}_t, -\lambda K_{{X}_t})$.
By taking an holomorphic framing $F$ of $\pi_{\ast} (\mathcal{O}(-\lambda K_{\mathcal{X}/\Delta}))$, we may assume that our family embeds by complete linear systems in $\P^d \times \Delta$ so that the following diagram commutes:
$$
\xymatrix{
\mathcal{X} \ar@{^{(}->}[r]^F \ar[rd]_{\pi} &
\P^d\times \Delta\ar[d]^{pr_2} \\
& \Delta }
$$
From now on let us identify our family $\mathcal{X}$ with its image $F(\mathcal{X})$ inside $\P^d\times \Delta$. Let $D_0$ be any divisor in $|-\lambda K_{{X}_0}|$. By \emph{very ampleness},  there exists an hyperplane $H(D_0) \in {\P^d}^\ast$ such that $D_0= H(D_0) \cap \mathcal{X}_0$ (scheme theoretic). Now consider the pull-back (alias restriction) of $\mathcal{X}$ to $H(D_0) \times \Delta$, i.e., $$\mathcal{H}(D_0):=\mathcal{X}\times_{(\P^d \times \Delta)} (H(D_0) \times \Delta).$$
By definition $\mathcal{H}(D_0)$ is a divisor in $\mathcal{X}$ whose (scheme theoretic) intersection $\mathcal{H}(D_0)\cap \mathcal{X}_t$ is a divisor in $|-\lambda K_{{X}_t}|$ for all $t\in \Delta$. With the above in mind, define the map
$$ \begin{array}{cccc}
R_t: & |-\lambda K_{{X}_0}| & \longrightarrow  &| -\lambda K_{{X}_t}| \\ 
 & D_0 & \longmapsto & \mathcal{H}(D_0)\cap {X}_t.
\end{array}$$
It is evident that the above map is projectively linear and injective for $t$ sufficiently small, since, by construction of the embedding, the fibers ${X}_t$ are not contained in hyperplanes. Thus $R_t$ must be a bijection (since the dimension of the linear system is constant).

By hypothesis, $X_t$ is smooth for $t \neq 0$ and, by the usual Bertini Theorem, the generic element in the $\lambda$th-anticanonical linear system is also smooth. Denote with $S_t \subset | -\lambda K_{{X}_t}|$ the subset of smooth divisors in $X_t$ for $t\neq 0$ and denote with $K_0$ the subset of divisors $D_0$ in ${X}_0$  such that $({X}_0, (1-\beta) D_0)$ is KLT for all $\beta \in (0,1]$. Thanks to the first part of the argument we know that $K_0$ is almost all of $|-\lambda K_{{X}_0}|$. Hence it follows that $R_t(K_0) \cap S_t \neq \emptyset $.

Let us rephrase what we have just proved: there is a point $p\neq 0$ (which we can take arbitrarly closed to $0$) and a  flat divisor (actually many) in $\mathcal{X}$, which we will call $\mathcal{D}$, such that $({X}_0, (1-\beta) \mathcal{D}_0)$ is KLT  for all $\beta \in [0,1)$ and  ${D}_p= {X}_p \cap \mathcal{D}$ is smooth. Since the locus of points $p$ in the disc $\Delta$ where ${D}_t$ is singular is an analytic variety and we know that such locus is not the entire (smaller) disc, we conclude that this locus must consist of isolated points. In particular there exists an $r>0$ such that in the family above the $r$-disc $\Delta_r$ the origin is the only point where ${D}_t$ is singular. This conclude the argument.
\end{proof}
 
\begin{rmk} Even if not necessary for the arguments in tha paper, we remark that $\mathcal{D}$ may be chosen so that ${D}_0$ is  a normal variety with at worst LT singularities. The techniques of the proof are standard (compare \cite{K}), but we sketch the main arguments for completeness.  Since $-\lambda K_{{X}_0}$ is very ample and $X_0$ has log-terminal singularities, the generic element of the very ample linear system is normal. To see that that the generic section is also LT, take any log-resolution of ${X}_0$, $f:Y\rightarrow {X}_0$. Then the generic normal hyperplane section $S$ has smooth birational transform $T$ and $f:T\rightarrow S$ is a log-resolution of $S$. Since ${X}_0$ is LT, by Prop. 7.7 of \cite{K},  $\mbox{discrep}(S)\geq \mbox{discrep}({X}_0) > -1,$ hence $S$ is LT. \end{rmk}

\end{document}